\documentclass[12pt]{amsart}
\usepackage[english]{babel}
\usepackage[T1]{fontenc}
\usepackage[utf8]{inputenc}
\usepackage{indentfirst}
\usepackage{fancyhdr}
\usepackage{amsthm}
\usepackage{amsmath}
\usepackage{amssymb}
\usepackage{braket}
\usepackage{color,mathtools}
\usepackage{cite,enumitem,graphicx}
\usepackage[colorinlistoftodos]{todonotes}
\usepackage[colorlinks=true,linktocpage,
bookmarksnumbered,bookmarksopen,pdfpagelabels,]{hyperref}
\usepackage[hyperpageref]{backref}
\allowdisplaybreaks

\newtheorem{Th}{Theorem}[section]
\newtheorem{Prop}[Th]{Proposition}
\newtheorem{Lem}[Th]{Lemma}

\theoremstyle{definition}

\theoremstyle{remark}
\newtheorem{Rem}[Th]{Remark}

\newcommand{\R}{\mathbb{R}}
\newcommand{\cA}{\mathcal{A}}
\newcommand{\cB}{\mathcal{B}}
\newcommand{\cS}{\mathcal{S}}
\newcommand{\cD}{\mathcal{D}}
\newcommand{\cC}{\mathcal{C}}
\newcommand{\cF}{\mathcal{F}}
\newcommand{\cO}{\mathcal{O}}
\newcommand{\cG}{\mathcal{G}}
\newcommand{\cH}{\mathcal{H}}
\newcommand{\cR}{\mathcal{R}}
\newcommand{\dx}{\mathrm{d}x}
\newcommand{\dt}{\mathrm{d}t}
\newcommand{\dy}{\mathrm{d}y}
\newcommand{\dz}{\mathrm{d}z}
\newcommand{\D}{\mathrm{D}}

\numberwithin{equation}{section}

\title[A Gagliardo--Nirenberg inequality and mass-(sub)critical equations]{A generalisation of the Gagliardo--Nirenberg Inequality with applications to mass-critical and mass-subcritical elliptic equations}

\author[Bartosz Bieganowski]{Bartosz Bieganowski}
\address[Bartosz Bieganowski]{\newline \indent Faculty of Mathematics, Informatics and Mechanics, University of Warsaw, ul. Banacha 2, 02-097 Warsaw, Poland}
\email{\href{mailto:bartoszb@mimuw.edu.pl}{bartoszb@mimuw.edu.pl}}

\author[Jacopo Schino]{Jacopo Schino}
\address[Jacopo Schino]{\newline \indent Faculty of Mathematics, Informatics and Mechanics, University of Warsaw, ul. Banacha 2, 02-097 Warsaw, Poland}
\email{\href{mailto:j.schino2@uw.edu.pl}{j.schino2@uw.edu.pl}}

\subjclass{35J35, 35J75, 35J91, 35R11}

\keywords{Generalised Gagliardo--Nirenberg inequality, fractional and polyharmonic operators, Hardy-type potentials, normalised solutions}

\begin{document}

\begin{abstract}
Via a new inequality \`a la Gagliardo--Nirenberg, we prove the existence and nonexistence of solutions to
\begin{equation*}
\begin{cases}
(-\Delta)^s u + \frac{\mu}{|y|^{2s}} u + \lambda u = f(u), \quad \mathbb{R}^N \ni x = (y,z) \in \mathbb{R}^K \times \mathbb{R}^{N-K}, \\
\int_{\mathbb{R}^N} u^2 \, \dx = \rho
\end{cases}
\end{equation*}
in the mass-critical and mass-subcritical regimes, where $s>0$, $N \ge K \ge 2$, $\mu \in \R$ belongs to a specific range, $\rho>0$ is given a priori, and $\lambda \in \R$ is unknown. Additionally, we obtain similar results for the problem above with $\mu=0$ and $N \ge 1$ as well as a related curl-curl equation. Finally, we provide a thorough insight into the threshold for $\rho$ that divides the scenarios of negative and zero least energy.
\end{abstract}

\maketitle

\section{Introduction}\label{s:intro}

This paper concerns the equation
\begin{equation}\label{eq:}
(-\Delta)^s u + \frac{\mu}{|y|^{2s}} u + \lambda u = f(u), \quad \R^N \ni x = (y,z) \in \R^K \times \R^{N-K},
\end{equation}
paired with the constraint
\begin{equation}\label{eq:L2}
\int_{\R^N} u^2 \, \dx = \rho.
\end{equation}
Here, $s>0$, $N \ge K \ge 2$, $\mu \in \R$ belongs to a specific range, $\rho>0$ is given a priori, and $\lambda \in \R$ is unknown and arises as a Lagrange multiplier. We will also study the case $\mu=0$ and $N\ge1$.

If $f \colon \R \to \R$ is odd (hence extendable to the complex plane by $f(e^{\mathrm{i} \theta} r) = e^{\mathrm{i} \theta} f(r)$), \eqref{eq:} is obtained when seeking standing-wave solutions to the corresponding evolution equation
\begin{equation}\label{eq:time}
-\mathrm{i} \frac{\partial \Psi}{\partial t} + (-\Delta)^s \Psi + \frac{\mu}{|y|^{2s}} \Psi = f(\Psi), \quad \Psi \colon \R \times \R^N \to \mathbb{C},
\end{equation}
i.e., $\Psi(t,x) = e^{\mathrm{i} \lambda t} u(x)$.
Taking $s=1$ and $\mu=0$ in \eqref{eq:time}, we obtain the well-known (autonomous) \textit{semilinear Schr\"odinger equation} \cite{Cazenave}. When $0 < s < 1$, \eqref{eq:time} with $\mu=0$ was derived somewhat recently by N. Laskin \cite{LaskinE00,LaskinA,Laskin02} following an approach to quantum mechanics based on path integrals proposed by R. Feynman, who maintained that the wave function solving Schr\"odinger's equation should be given by the sum over all the possible histories of the system, i.e., by a heuristic integral over an infinity of quantum-mechanically possible trajectories \cite{Feynman}. Later on, the fractional semilinear Schr\"odinger equation with $s>1$ was used to describe anomalous diffusion in the Bloch--Torrey equation \cite{MABZ} and model acoustic wave propagation in heterogeneous attenuating media \cite{ZH}. When $s=2$, \eqref{eq:time} with $\mu=0$ was considered in \cite{IK,Turitsyn} to study the stability of solitons in magnetic materials once the effective quasi-particle mass becomes infinite. In general, higher-order powers are called for when the modelled phenomena become either more complex or encompass more complicated fundamental processes. For instance, polyharmonic equations were studied in the context of \textit{critical semilinear polyharmonic eigenvalue problems} \cite{Gazzola}, which are related to the \textit{Pucci--Serrin conjecture} \cite{PS}; more recently, they have been investigated in \cite{CCM}.

Concerning the singular term $\mu |y|^{-2s}$, when $N=K$ and $s=1$, this is the well-known \textit{Hardy potential}, closely related to the \textit{Hardy inequality} \cite{Hardy,RS}. As for the case $K < N$ (and $s=1$), one possible derivation is via the \textit{curl-curl equation}
\begin{equation}\label{eq:curl}
	\nabla \times \nabla \times \mathbf{U} = f(|\mathbf{U}|) \frac{\mathbf{U}}{|\mathbf{U}|}, \quad \mathbf{U} \colon \Omega \to \R^3,
\end{equation}
in domains $\Omega \subset \R^3$ invariant under the action of $\cO(2) \times \{1\}$, sometimes referred to as \textit{cylindrically symmetric}: here, the ansatz
\begin{equation}\label{eq:cyl}
	\mathbf{U}(x) = \frac{u(x)}{\sqrt{x_1^2 + x_2^2}}
	\begin{pmatrix}
		-x_2\\
		x_1\\
		0
	\end{pmatrix}
\end{equation}
for some $\cO(2) \times \{1\}$-invariant $u \colon \Omega \to \R$ leads to \eqref{eq:} (without the Lagrange multiplier $\lambda$) with $N=3$, $K=2$, and $\mu=1$ (more information on curl-curl problems in cylindrically-symmetric media is available in Section \ref{s:curl}). Different derivations are provided in \cite{BadBenRol,EL}.

The constraint \eqref{eq:L2} is motivated by the fact that the $L^2$ norm of solutions to \eqref{eq:time} is constant in time, not only for standing waves (which is obvious), but also, under mild conditions on $f$ (see (\ref{a:std}) below), for solutions such that $\Psi(t,\cdot)$ belongs to the right function space for every $t \in \R$
-- cf. $X^\mu$ below. This can be easily computed by multiplying \eqref{eq:time} by the complex conjugate of $\Psi$, integrating, and taking the imaginary part.

With the intention of studying \eqref{eq:}-\eqref{eq:L2} via variational methods, we introduce the space
\begin{equation*}
X^\mu \coloneqq \Set{u \in H^s(\R^N) | \left|\mu\right| \int_{\R^N} \frac{u^2}{|y|^{2s}} \, \dx < +\infty}
\end{equation*}
(note that $X^0 = H^s(\R^N)$), where
\begin{align*}
H^s(\R^N) & = \Set{u \in L^2(\R^N) | \int_{\R^N} |\xi|^{2s} |\cF(u)|^2 \, \mathrm{d}\xi < +\infty} \\
& = \Set{u \in L^2(\R^N) | \int_{\R^N} (1 + |\xi|^{2s}) |\cF(u)|^2 \, \mathrm{d}\xi < +\infty}
\end{align*}
and $\cF \colon L^2 (\R^N) \rightarrow L^2 (\R^N)$ is the Fourier transform. In view of the symmetry of \eqref{eq:}, we will introduce a symmetric subspace of $X^\mu$. Given $g \in \cO(K)$, we will denote
\begin{equation*}
\widetilde{g} \coloneqq
\begin{pmatrix}
g & 0\\
0 & I_{N-K}
\end{pmatrix}
\in \cO(N),
\end{equation*}
where $I_{N-K}$ is the $(N-K) \times (N-K)$ identity matrix, and
\begin{equation*}
\cG \coloneqq \Set{\widetilde g | g \in \cO(K)} \subset \cO(N).
\end{equation*}
It is understood that $\widetilde{g} = g$ and $\cG = \cO(N)$ when $K=N$. The subspace of $X^\mu$ consisting of $\cG$-invariant functions is defined as
\begin{equation*}
X_\cG^\mu \coloneqq \Set{u \in X^\mu | u(g\cdot) = u \text{ for all } g \in \cG}.
\end{equation*}
Observe that $\cG$ is compact and its action on $X^\mu$ is isometric; hence, we can exploit the principle of symmetric criticality \cite[Theorem on p. 22]{Palais}.

The assumptions about $f$ are as follows, where $F(t) \coloneqq \int_0^t f(r) \, \mathrm{d}r$,
\begin{equation*}
\alpha_N \coloneqq \frac{N(2\pi)^N}{\omega_{N-1}},
\end{equation*}
$\omega_{N-1}$ is the $(N-1)$-dimensional measure of the unit sphere $\mathbb{S}^{N-1}$, $2^* = +\infty$ if $N \le 2s$ and $2^* = \frac{2N}{N-2s}$ if $N > 2s$, and
\begin{equation*}
2_\# \coloneqq 2 + \frac{4s}{N}
\end{equation*}
is the so-called \textit{mass-critical} exponent.
\begin{enumerate}[label=(f\arabic{*}),ref=f\arabic{*}]
    \item \label{a:std} $f \in \cC(\R;\R)$ and
    \begin{itemize}
        \item if $N < 2s$, then $|f(t)| \lesssim |t|$ for all $t \in [-1,1]$;
        \item if $N = 2s$, then for every $q \ge 2$ and every $\alpha > \alpha_N$ there holds $|f(t)| \lesssim |t| + |t|^{q-1} (e^{\alpha t^2} - 1)$ for all $t \in \R$;
        \item if $N > 2s$, then $|f(t)| \lesssim |t| + |t|^{2^*-1}$ for all $t \in \R$.
    \end{itemize}
    \item \label{a:supq} $\displaystyle \lim_{t \to 0} \frac{F(t)}{t^2} = 0$.
    \item \label{a:subci} $\displaystyle \overline{\eta}_\infty \coloneqq
    \limsup_{|t| \to +\infty} \frac{F_+(t)}{|t|^{2_\#}} < +\infty$.
    \item \label{a:subcz} $\displaystyle \underline{\eta}_0 \coloneqq \liminf_{t \to 0} \frac{F_+(t)}{|t|^{2_\#}} > 0$.
\end{enumerate}

We introduce the functional $J \colon X^\mu \to \R$ defined by
\begin{equation*}
J(u) \coloneqq \frac12 \int_{\R^N} |\D^s u|^2 + \mu \frac{u^2}{|y|^{2s}} \, \dx - \int_{\R^N} F(u) \, \dx,
\end{equation*}
which is of class $\cC^1$ if \eqref{a:std} holds (see Lemma \ref{lem:L1} for the case $N=2s$), where
\begin{equation*}
\D^s u \coloneqq
\begin{cases}
\Delta^{s/2} u & \text{if } s \in \mathbb{N} \text{ is even}\\
\nabla \Delta^{(s-1)/2} u & \text{if } s \in \mathbb{N} \text{ is odd}\\
(-\Delta)^{s/2 - \lfloor s/2 \rfloor} (-\Delta)^{\lfloor s/2 \rfloor} u & \text{if } s \not\in \mathbb{N}
\end{cases}
\end{equation*}
and $\lfloor s/2 \rfloor$ is the integer part of $s/2$. We recall that, for $0 < s < 1$, the fractional Laplacian of $u$ is defined, e.g., via
\begin{equation*}
(-\Delta)^s u \coloneqq \cF^{-1}\bigl( |\cdot|^{2s} \cF (u) \bigr).
\end{equation*}

Concerning the range of $\mu$, we will prove in Lemma \ref{lem:Hardy} that if $K>2s$, then
\begin{equation}\label{eq:emb}
\int_{\R^N} \frac{u^2}{|y|^{2s}} \, \dx \le \left( \frac{\Gamma(\frac{K-2s}{4})}{2^s \Gamma(\frac{K+2s}{4})} \right)^2 \int_{\R^N} |\D^s u|^2 \, \dx \quad \text{for all } u \in \cD^{s,2}(\R^N),
\end{equation}
where $\Gamma$ is the Gamma function and
\begin{equation*}
\cD^{s,2}(\R^N) \coloneqq \Set{u \in L^{2^*}(\R^N) | \left| \D^s u \right| \in L^2(\R^N)}.
\end{equation*}
Consequently, we will always assume $\mu \ge 0$ when $K \le 2s$ and $\mu > -\left( 2^s \frac{\Gamma(\frac{K+2s}{4})}{\Gamma(\frac{K-2s}{4})} \right)^2$ when $K > 2s$ (in this case, $X^\mu = H^s(\R^N)$). To simplify notations, we define
\begin{equation*}
[u]_\mu^2 \coloneqq \int_{\R^N} |\D^s u|^2 + \mu \frac{u^2}{|y|^{2s}} \, \dx.
\end{equation*}

As stated earlier, we will use variational methods to find solutions to \eqref{eq:}-\eqref{eq:L2}; in particular, we will consider $J$ restricted to these subsets:
\begin{align*}
\cS_0(\rho) & \coloneqq \Set{u \in H^s(\R^N) | \int_{\R^N} u^2 \, \dx = \rho}, \quad \cS(\rho) \coloneqq \cS_0(\rho) \cap X_\cG^\mu, \\
\cD_0(\rho) & \coloneqq \Set{u \in H^s(\R^N) | \int_{\R^N} u^2 \, \dx \le \rho}, \quad \text{and} \quad \cD(\rho) \coloneqq \cD_0(\rho) \cap X_\cG^\mu.
\end{align*}

When $F$ grows as the power $2_\#$, at infinity or the origin, we need to restrict the range of $\rho>0$ to certain intervals. Specifically, we will consider the conditions
\begin{equation}\label{eq:etai}
2 C_{N,2_\#} \overline{\eta}_\infty \rho^{2s/N} < 1
\end{equation}
and
\begin{equation}\label{eq:etaz}
2 C_{N,2_\#} \underline{\eta}_0 \rho^{2s/N} > 1,
\end{equation}
where $C_{N,2_\#}$ is given in \eqref{eq:GNsym} for $p = 2_\#$. It is clear that \eqref{eq:etai} implies (\ref{a:subci}) and \eqref{eq:etaz} implies (\ref{a:subcz}).

Let us denote
\begin{equation}\label{eq:m}
m(\rho) \coloneqq \inf\Set{J(u) | u \in \cD(\rho)}
\end{equation}
and
\begin{equation}\label{eq:threshold}
\rho_* \coloneqq \inf \Set{\rho > 0 | m(\rho) < 0}.
\end{equation}
Of course, $\rho \mapsto m(\rho)$ is nonincreasing and, since $0 \in \cD(\rho)$, $m(\rho) \le 0$ for every $\rho > 0$. Moreover, it is clear that $m(\rho) = 0$ for all $\rho \in (0,\rho_*)$ and that $\rho_* = +\infty$ if $F \le 0$. Under the convention that $\frac10 = +\infty$, our first result about the existence of a solution to \eqref{eq:}-\eqref{eq:L2} is the following.

\begin{Th}\label{t:neg}
Assume $N \ge K \ge 2$, (\ref{a:std}), $F$ is positive somewhere. The following holds.
\begin{enumerate}
    \item $\rho_* \le \rho_F$, where    \begin{equation}\label{eq:rhoF}
    \rho_F \coloneqq \inf_{\sigma > 0} \left( \frac{1}{2 \sigma} \right)^{N / (2s)} \inf \Set{ [u]_\mu^{N/s} \left|u\right|_2^2 | u \in X_\cG^\mu \text{ and } \int_{\R^N} F(u) \, \dx \ge \sigma }.
    \end{equation}
    \item If (\ref{a:subcz}) holds, then $\rho_* \le (2 C_{N,2_\#} \underline{\eta}_0)^{-N/(2s)}$.
    \item Assume (\ref{a:supq}) holds, $\rho_*$ satisfies \eqref{eq:etai}, and $\rho \in \bigl( \rho_* , (2 C_{N,2_\#} \overline{\eta}_\infty)^{-N/(2s)} \bigr)$. If $(u_n)_n \subset \cD(\rho)$ and $\lim_n J(u_n) = m(\rho)$, then there exist $u \in \cS(\rho)$ and $\lambda > 0$ such that, up to subsequences and translations in the variable $z \in \R^{N-K}$, $u_n \to u$ in $X^\mu$, $J(u) = m(\rho) \in (-\infty,0)$, and $(\lambda,u)$ is a solution to \eqref{eq:}-\eqref{eq:L2}. Furthermore, every $\widetilde{u} \in \cD(\rho)$ such that $J(\widetilde{u}) = m(\rho)$ belongs to $\cS(\rho)$ and satisfies \eqref{eq:} for some $\widetilde{\lambda} > 0$.
\end{enumerate}
\end{Th}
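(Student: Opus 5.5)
The three items call for different techniques. (1) and (2) follow from the $L^2$-preserving scaling $u_t(x) \coloneqq t^{N/2} u(tx)$. Item (3) is a compactness statement for minimising sequences, to be proved with the generalised Gagliardo--Nirenberg inequality \eqref{eq:GNsym} and a splitting argument on the mass.

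For (1), fix $\sigma>0$ and $u \in X_\cG^\mu$ with $\int F(u)\,\dx \ge \sigma$. Consider $v(x) \coloneqq u(x/\lambda)$ for $\lambda>0$; $v$ is still $\cG$-invariant. Then $[v]_\mu^2 = \lambda^{N-2s}[u]_\mu^2$, since the Hardy term scales like $|\D^s\cdot|^2$. We also have $\int F(v) = \lambda^N \int F(u) \ge \lambda^N\sigma$ and $|v|_2^2 = \lambda^N |u|_2^2$. Hence
\[
J(v) \le \lambda^{N-2s}\Bigl(\tfrac12 [u]_\mu^2 - \lambda^{2s}\sigma\Bigr),
\]
which is negative as soon as $\lambda^{2s} > [u]_\mu^2/(2\sigma)$. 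The mass of $v$ is then $\lambda^N|u|_2^2$, which can be taken arbitrarily close to $\bigl([u]_\mu^2/(2\sigma)\bigr)^{N/(2s)}|u|_2^2 = (2\sigma)^{-N/(2s)}[u]_\mu^{N/s}|u|_2^2$ from above. So $m(\rho)<0$ for every $\rho$ exceeding this quantity. Taking infima over $u$ and then $\sigma$ gives $\rho_*\le\rho_F$.

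For (2), I would take an optimiser, or near-optimiser, $w$ of \eqref{eq:GNsym} at $p=2_\#$ in $X_\cG^\mu$ with $|w|_2^2=\rho$. By the hypothesis \eqref{eq:etaz}, for $\eta<\underline\eta_0$ close to it we have $F(t)\ge \eta|t|^{2_\#}$ for small $|t|$. Set $w_t = t^{N/2}w(t\cdot)$ with $t\to0$, so that $|w_t|_\infty\to0$, assuming $w$ is bounded or after truncating. Using $[w_t]_\mu^2 = t^{2s}[w]_\mu^2$ and $|w_t|_{2_\#}^{2_\#} = t^{2s}|w|_{2_\#}^{2_\#}$, we get
\[
J(w_t) \le t^{2s}\Bigl(\tfrac12[w]_\mu^2 - \eta |w|_{2_\#}^{2_\#}\Bigr).
\]
The bracket is negative when $2C_{N,2_\#}\eta\rho^{2s/N}>1$, by near-equality in GN. Here I would use that the sharp constant $C_{N,2_\#}$ is defined as a supremum of $|u|_{2_\#}^{2_\#}/([u]_\mu^2|u|_2^{4s/N})$, so near-extremals exist. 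This gives $m(\rho)<0$, hence $\rho_*\le(2C_{N,2_\#}\underline\eta_0)^{-N/(2s)}$. Alternatively, (2) follows from (1) via the same $w_t$.

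For (3), the steps are as follows.

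(a) Coercivity: by \eqref{eq:etai}, (\ref{a:std}), (\ref{a:supq}) and GN, $\int F(u)\le (\overline\eta_\infty+\varepsilon)C_{N,2_\#}[u]_\mu^2\rho^{2s/N}+C_\varepsilon|u|_2^2$. Hence $J$ is bounded below and coercive on $\cD(\rho)$, and $m(\rho)>-\infty$. Moreover $m(\rho)<0$ since $\rho>\rho_*$.

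(b) Non-vanishing: a bounded minimising sequence cannot vanish in the sense of Lions for translations in $z$. The symmetry in $y$ gives the compact-type Lions lemma for $\cG$-invariant functions, which I take from earlier in the paper. Vanishing would give $\int F(u_n)\to0$ by (\ref{a:supq}) and the subcritical/critical-growth interpolation, so $\liminf J\ge0$, contradicting $m(\rho)<0$. Thus after $z$-translations $u_n\rightharpoonup u\ne0$.

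(c) Strict subadditivity: the function $\rho\mapsto m(\rho)$ satisfies $m(\theta\rho)\le\theta m(\rho)$ for $\theta>1$, strictly when $m(\rho)<0$, via the scaling $u\mapsto u(\cdot/\theta^{1/N})$. Combine this with Brezis--Lieb splitting $J(u_n)=J(u)+J(u_n-u)+o(1)$ and $|u_n-u|_2^2\le\rho-|u|_2^2+o(1)$. This yields $J(u)=m(|u|_2^2)$ and $u_n\to u$ strongly, i.e.\ $[u_n-u]_\mu\to0$, using that $m$ vanishes on $(0,\rho_*)$ together with the strict inequality. I expect this dichotomy exclusion to be the main obstacle, because the minimisation is over the ball $\cD(\rho)$ rather than the sphere.

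(d) Mass on the sphere: once $J(u)=m(\rho)<0$, I show $|u|_2^2=\rho$. Otherwise $u$ is an unconstrained local minimiser in the interior of the ball, so $J'(u)=0$, and the Pohozaev/scaling identity $\frac{d}{d\theta}J(u(\cdot/\theta^{1/N}))|_{\theta=1}=0$ gives $J(u)=\frac sN[u]_\mu^2\cdot(\ldots)$. Concretely, $\frac{N-2s}{2N}[u]_\mu^2-\int F=0$, so $J(u)=\frac sN[u]_\mu^2>0$, a contradiction. The Lagrange multiplier rule on $\cS(\rho)$ then gives $\lambda$. Since $u$ minimises on the ball, $\lambda\ge0$ as a KKT sign condition. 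If $\lambda=0$, the same Pohozaev argument contradicts $J(u)<0$, so $\lambda>0$. Principle of symmetric criticality upgrades the equation from $X_\cG^\mu$ to $X^\mu$. The same reasoning applies to any minimiser $\widetilde u$.
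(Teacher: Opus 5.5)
\textbf{Gap in item (2).} Your estimate $J(w_t)\le t^{2s}\bigl(\frac12[w]_\mu^2-\eta|w|_{2_\#}^{2_\#}\bigr)$ needs $F(w_t)\ge\eta|w_t|^{2_\#}$ at every point, i.e.\ $t^{N/2}|w|_\infty\le\delta$. So it requires $w\in L^\infty(\R^N)$, and for $N\ge 2s$ nothing provides this.

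The optimiser of Theorem~\ref{t:GN} need not be bounded. For $s=1$, $N=K\ge3$ and $\mu<0$ it is singular at the origin and behaves like $|x|^{-\gamma}$ with $\gamma=\frac{N-2}{2}-\sqrt{(\frac{N-2}{2})^2+\mu}>0$. Your alternative route through (1) has the same problem: the paper proves $\rho_F\le(2C_{N,2_\#}\underline\eta_0)^{-N/(2s)}$ only under the extra hypothesis $w\in L^\infty(\R^N)$.

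Truncating does not fix this in general. For $s\ge\frac32$ not even $u\mapsto|u|$ maps $H^s(\R^N)$ into itself, so a truncated $w$ need not lie in $X_\cG^\mu$. Choosing bounded near-extremals by density would need its own proof. A radial mollifier works when $K>2s$, but for $\mu>0$ and $K\le 2s$ a mollified function is generically nonzero on $\{y=0\}$, which makes $\int_{\R^N}u^2|y|^{-2s}\,\dx$ infinite.

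The missing idea is that no uniform smallness is needed. For any $u\in\cD(\rho)\setminus\{0\}$, write $J(t\star u)=t^{2s}\bigl(\frac12[u]_\mu^2-(t^{N/2})^{-2_\#}\int_{\R^N}F(t^{N/2}u)\,\dx\bigr)$.
\begin{itemize}
\item Since $t^{N/2}u(x)\to0$ pointwise, Fatou's lemma gives $\liminf_{t\to0^+}(t^{N/2})^{-2_\#}\int_{\R^N}F_+(t^{N/2}u)\,\dx\ge\underline\eta_0|u|_{2_\#}^{2_\#}$, with no bound on $|u|_\infty$.
\item The $F_-$ part vanishes in the limit. Indeed, $F_-\equiv0$ near $0$ because $\underline\eta_0>0$. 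Then (\ref{a:std}) bounds $F_-$ by $|t|^{2^*}$ when $N>2s$, by $|t|^5(e^{\alpha t^2}-1)$ when $N=2s$, and by $|t|^{2_\#+1}$ on the range of $u$ when $N<2s$ (via $H^s\hookrightarrow L^\infty$).
\item Taking $u$ to be a rescaling of the exact optimiser $w$ then gives $J(t\star u)<0$ for small $t$ under \eqref{eq:etaz}, with no boundedness assumption.
\end{itemize}

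\textbf{Items (1) and (3).} These are fine and follow the paper's route. Item (1) is the dilation argument of Lemma~\ref{lem:neg}(i). Item (3) uses coercivity via \eqref{eq:GNsym}, the $\cG$-invariant Lions lemma, subadditivity with Brezis--Lieb, and a Poho\v{z}aev-type identity. Your one-bump exclusion of dichotomy and the explicit mass-changing dilation $\theta\mapsto u(\cdot/\theta^{1/N})$ are slight simplifications; the latter rules out $|u|_2^2<\rho$ without any differentiation under the integral.

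Two small corrections are needed.
\begin{itemize}
\item Vanishing yields only $\int_{\R^N}F_+(u_n)\,\dx\to0$, since $F_-$ may grow critically. This is all you need.
\item Once $|u|_2^2=\rho$, the case $\lambda=0$ cannot be excluded by ``the same'' dilation. Values $\theta>1$ leave $\cD(\rho)$, and $\theta\mapsto u(\cdot/\theta^{1/N})$ need not be a $\cC^1$ curve in $X^\mu$. Instead, as the paper does, use the mass-preserving curve $t\star u$, along which the derivative of $J$ vanishes at $t=1$, together with the Nehari identity. This gives $J(u)=\frac sN[u]_\mu^2-\frac\lambda2\rho$, and hence $\lambda>0$ because $J(u)<0$.
\end{itemize}
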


The subset of $X_\cG^\mu$ in \eqref{eq:rhoF} is nonempty. Indeed, letting $R \gg 1$ and $t_* \in \R$ such that $F(t_*) > 0$, a function $u$ belonging to it can be built by taking $\phi \in \cC^{\infty}(\R,\R)$ such that $\phi \equiv 0$ in $(-\infty,-R-1] \cup [-1,1] \cup [R+1,+\infty)$, $\phi \equiv 1$ in $[-R,-2] \cup [2,R]$, $|\phi|_\infty \le 1$, and defining $u(x) = t_* \phi(|y|) \phi(|z|)$.

\begin{Rem}
If, in addition to the hypotheses of Theorem \ref{t:neg} (3), we assume $0 < s \le 1$, $K>2s$, and $-\left( 2^s \frac{\Gamma(\frac{K+2s}{4})}{\Gamma(\frac{K-2s}{4})} \right)^2 < \mu \le 0$, then there exists $v \in \cS(\rho)$ such that $J(v) = \min \Set{J(w) | w \in \cD_0(\rho)} = m(\rho)$. Indeed, taking $(u_n)_n \subset \cD_0(\rho)$ such that $\lim_n J(u_n) = \inf \Set{J(w) | w \in \cD_0(\rho)}$ and denoting $u_n^*$ the Schwarz rearrangement of $u_n$ with respect to $y \in \R^K$, we obtain $u_n^* \in \cD(\rho)$ and $J(u_n^*) \le J(u_n)$ for every $n$.
\end{Rem}

\begin{Rem}\label{rem:rho*}
If $\limsup_{t \to 0} F(t) |t|^{-2_\#} < +\infty$ and (\ref{a:std}) and (\ref{a:subci}) hold, then $\rho_* > 0$. Indeed, recalling that $m(\rho) \le 0$ because $0 \in \cD(\rho)$, it is obvious that $m(\rho) \ge 0$ if $F \le 0$. If, instead, $F$ is positive somewhere, then
\begin{equation*}
C_F \coloneqq \sup_{t \ne 0} \frac{F(t)}{|t|^{2_\#}} \in (0,+\infty).
\end{equation*}
Since, for every $u \in \cD(\rho)$,
\begin{equation*}
J(u) \ge \frac12 [u]_\mu^2 - C_F |u|_{2_\#}^{2_\#} \ge \left( \frac12 - C_F C_{N,2_\#} \rho^{2s/N} \right) [u]_\mu^2,
\end{equation*}
we have that $m(\rho) \ge 0$ for all $\rho \le (2 C_F C_{N,2_\#})^{-N/(2s)}$.
\end{Rem}

In the next theorem, recalling the convention that $\frac10 = +\infty$, we assume that
\begin{equation}\label{eq:zero_rho_star}
0 < \rho_* < (2 C_{N,2_\#} \overline{\eta}_\infty)^{-N/(2s)}.
\end{equation}
Notice that \eqref{eq:zero_rho_star} implies $F$ is positive somewhere (otherwise, as observed before, $\rho_* = +\infty$). We define
\begin{equation*}
\overline{\eta}_0 \coloneqq \limsup_{t \to 0} \frac{F_+(t)}{|t|^{2_\#}}
\end{equation*}
and introduce the condition
\begin{equation}\label{eq:two_etas}
2 C_{N,2_\#} \overline{\eta}_0 \rho^{2s/N} < 1
\end{equation}
as well as the set
\begin{equation}\label{eq:setB}
\cB(\rho) \coloneqq \Set{ u \in \cD(\rho) | [u]_\mu^2 > \gamma_* },
\end{equation}
where $\gamma_*>0$ is given in Lemma \ref{l:gamma} for $\rho = \rho_*$, and
\begin{equation}\label{eq:m_star}
m_*(\rho) \coloneqq \inf \Set{ J(u) | u \in \cB(\rho) }.
\end{equation}
In a way analogous to what happens for \eqref{eq:etai}, \eqref{eq:zero_rho_star} implies (\ref{a:subci}). Of course, (\ref{eq:etaz}) and \eqref{eq:two_etas} are incompatible.

\begin{Th}\label{t:pos}
Assume $N \ge K \ge 2$, (\ref{a:std})-(\ref{a:supq}), $\rho_*$ satisfies \eqref{eq:zero_rho_star}-\eqref{eq:two_etas}, and let $\overline{\rho}_*$ be given by Lemma \ref{lem:A}. If $\rho \in (\overline{\rho}_* , \rho_*]$, $(u_n)_n \subset \cB(\rho)$, and $\lim_n J(u_n) = m_*(\rho)$, then there exists $u \in \cB(\rho) \cap \cS(\rho)$ and $\lambda > 0$ such that, up to subsequences and translations in the variable $z \in \R^{N-K}$, $u_n \to u$ in $X^\mu$ and $(\lambda,u)$ is a solution to \eqref{eq:}-\eqref{eq:L2}. In particular, $m_*(\rho) = J(u) > 0$ if $\rho \in (\overline{\rho}_* , \rho_*)$ and $m_*(\rho_*) = J(u) = 0 = m(\rho_*)$.
\end{Th}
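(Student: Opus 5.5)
The plan is a minimisation argument on $\cB(\rho)$, built on two facts. First, at the level $[u]_\mu^2=\gamma_*$ the functional is strictly larger than $m_*(\rho)$. Second, minimising sequences are compact modulo $z$-translations. Write $\gamma_*$ for the constant from Lemma \ref{l:gamma} at $\rho=\rho_*$, and take $\overline{\rho}_*$ from Lemma \ref{lem:A}. I expect Lemma \ref{lem:A} to give, for $\rho\in(\overline{\rho}_*,\rho_*]$, that $m_*(\rho)<\inf\{J(u) \mid u\in\cD(\rho),\ [u]_\mu^2=\gamma_*\}$. That is, the infimum over $\cB(\rho)$ stays strictly below the "wall" $[u]_\mu^2=\gamma_*$. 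I also expect $m_*(\rho)\ge 0$ for $\rho\le\rho_*$, since $m(\rho)=0$ on $(0,\rho_*)$ and $m$ is nonincreasing; then $m(\rho_*)=0$ provided we exclude $m(\rho_*)<0$, which follows from lower semicontinuity of $m$ in $\rho$ or from the compactness below.

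\textbf{Step 1 (boundedness and nonvanishing).} Given $(u_n)\subset\cB(\rho)$ with $J(u_n)\to m_*(\rho)$, the bound \eqref{eq:zero_rho_star}, via (\ref{a:subci}) and the Gagliardo--Nirenberg inequality \eqref{eq:GNsym}, gives coercivity, so $[u_n]_\mu$ is bounded. Since $[u_n]_\mu^2>\gamma_*$, condition \eqref{eq:two_etas} together with the small-$t$ estimate shows $\int F(u_n)$ cannot tend to $0$. Indeed, if it did, then $\liminf J(u_n)\ge \tfrac12\gamma_*$ minus a small term, which exceeds $m_*(\rho)$. Here I would use the strict inequality from Lemma \ref{lem:A}, or an explicit test function, to get $m_*(\rho)<\gamma_*/2$.

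\textbf{Step 2 (concentration-compactness).} By a Lions-type lemma for $\cG$-invariant functions (translations only in $z$, with compact embedding in the radial $y$-directions since $K\ge2$), we obtain nonvanishing. After $z$-translations, $u_n\rightharpoonup u\ne0$. We then exclude dichotomy using the Brezis--Lieb splitting $J(u_n)=J(u)+J(u_n-u)+o(1)$, $|u_n|_2^2=|u|_2^2+|u_n-u|_2^2+o(1)$, $[u_n]_\mu^2=[u]_\mu^2+[u_n-u]_\mu^2+o(1)$. Set $v_n=u_n-u$. Its mass is below $\rho\le\rho_*$, so $J(v_n)\ge m(|v_n|_2^2)\ge 0$ up to $o(1)$, and in fact $J(v_n)\ge c[v_n]_\mu^2$ when $v_n$ has small norm. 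If $[u]_\mu^2>\gamma_*$, then $u\in\cB(\rho)$ and $J(u)\le m_*(\rho)$, which forces $J(v_n)\to0$ and hence $[v_n]_\mu\to0$. If instead $[u]_\mu^2\le\gamma_*$, then $J(u)\ge0$ by the same reasoning (mass below $\rho_*$), and combining this with the wall estimate from Lemma \ref{lem:A} applied along the segment from $u$ to $u_n$ yields a contradiction. This case analysis is the main obstacle. I would handle it by a continuity argument along the segment, using that $[\cdot]_\mu^2$ passes through $\gamma_*$ along it.

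\textbf{Step 3 (mass constraint and Lagrange multiplier).} Once $u_n\to u$ in $X^\mu$, we get $J(u)=m_*(\rho)$, and $u$ is an interior minimiser relative to the open condition $[u]_\mu^2>\gamma_*$. To see $|u|_2^2=\rho$, suppose $|u|_2^2<\rho$. Then $u$ is a free critical point, $J'(u)=0$, and the Pohozaev/Nehari identity combined with (\ref{a:supq}) gives a contradiction. Alternatively, the mass-preserving dilation $u(\cdot/t)$ with $t>1$ strictly decreases $J$ while staying in $\cB(\rho)$. Hence $u\in\cS(\rho)$. The Lagrange multiplier rule, with symmetric criticality, gives $\lambda$. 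Pairing \eqref{eq:} with $u$ and using the minimising property (the derivative in mass is nonpositive, since $m_*$ is nonincreasing) yields $\lambda\ge0$, and a Pohozaev argument makes it strict. Finally, $m_*(\rho)>0$ for $\rho<\rho_*$: a minimiser with $J(u)=0$ would give $m(\rho')<0$ for some slightly larger $\rho'<\rho_*$ by scaling, contradicting the definition of $\rho_*$. At $\rho_*$, $m_*(\rho_*)\ge0$, while approximating with $\rho\downarrow\rho_*$ gives $m_*(\rho_*)\le0=m(\rho_*)$.
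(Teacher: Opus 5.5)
Your overall architecture matches the paper: a bounded minimising sequence, nonvanishing, Brezis--Lieb splitting, and a Lagrange multiplier. Two decisive steps are not closed, however, and the mechanisms you propose for them do not work as stated.

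\textbf{The case $[u]_\mu^2\le\gamma_*$ in Step~2.} A continuity argument along the segment does not close it. At the crossing point $w_n=u+\theta_n v_n$ with $[w_n]_\mu^2=\gamma_*$, it bounds $J(w_n)=J(u)+J(\theta_n v_n)+o(1)$ from below. You need a lower bound for $J(u)+J(v_n)$ instead, and under (\ref{a:std})--(\ref{a:supq}) nothing makes $\theta\mapsto J(\theta v_n)$ monotone. More fundamentally, a pure `wall' estimate is too weak. If $[v_n]_\mu^2$ stays below $\gamma_*$, Lemma~\ref{l:gamma} only gives $m_*(\rho)\ge c\bigl([u]_\mu^2+\lim_n[v_n]_\mu^2\bigr)\ge c\gamma_*$, where $c=\frac12\bigl(\frac12-C_{N,2_\#}\overline{\eta}_0\rho^{2s/N}\bigr)$. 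That is no contradiction unless $m_*(\rho)<c\gamma_*$. This quantitative bound is exactly what Lemma~\ref{lem:A} supplies. It holds on the whole shell $\cA(\rho)=\{u\in\cB(\rho) : [u]_\mu^2\le4\gamma_*\}$, not just on the wall, because Lemma~\ref{l:gamma} is valid up to $4\gamma_*$. The paper uses it first to get $[u_n]_\mu^2>4\gamma_*$ for large $n$. Then $[u]_\mu^2\le\gamma_*$ forces $[v_n]_\mu^2\ge2\gamma_*$, and there are two cases:
\begin{itemize}
\item if $|u|_2^2=\rho$, then $|v_n|_2\to0$ and $J(v_n)\ge\gamma_*+o(1)>m_*(\rho)$;
\item if $|u|_2^2<\rho$, then $J(u)>0$ by Lemma~\ref{l:noatt}, while $J(v_n)\ge m_*(|v_n|_2^2)\ge m_*(\rho)$ by monotonicity of $m_*$.
\end{itemize}
Either way $m_*(\rho)=J(u)+\lim_n J(v_n)$ is contradicted, with no segment argument.

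\textbf{Step~3.} If $\lambda=0$ (in particular if $|u|_2^2<\rho$), Nehari and Poho\v{z}aev only give $J(u)=\frac sN[u]_\mu^2$, and (\ref{a:supq}) yields no contradiction. The dilation alternative does not help either. First, $u(\cdot/t)$ is not mass-preserving. Second, $\frac{\mathrm{d}}{\mathrm{d}t}\big|_{t=1}J(u(\cdot/t))=NJ(u)-s[u]_\mu^2$, so `strictly decreases' is equivalent to $J(u)<\frac sN[u]_\mu^2$, which is precisely what has to be shown. The paper closes this with the other half of Lemma~\ref{lem:A}, namely $m_*(\rho)<\frac sN\gamma_*$. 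Then $\lambda=0$ gives the contradiction $\frac sN\gamma_*>m_*(\rho)=J(u)=\frac sN[u]_\mu^2>\frac sN\gamma_*$. Hence $\lambda>0$ and $u\in\cS(\rho)$, so $u_n\to u$ in $L^2$, and then in $X^\mu$ via the energy.

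Doing things in this order also spares you the unproved implication `$J(v_n)\to0\Rightarrow[v_n]_\mu\to0$' when $[v_n]_\mu$ is not small. Your treatment of $m_*(\rho)>0$ for $\rho<\rho_*$ and of $m_*(\rho_*)\le0$ is fine. The paper gets the latter from the minimiser of Lemma~\ref{l:zero}, which is also what makes Lemma~\ref{lem:A} true.
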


\begin{Rem}
In Theorem \ref{t:pos}, the assumption that $\rho_*$ satisfies \eqref{eq:zero_rho_star}-\eqref{eq:two_etas} could seem excessive for an outcome about $\rho \in (\overline{\rho}_* , \rho_*)$. Nonetheless, the proof of Theorem \ref{t:pos} is based on the preliminary result that a minimiser for $J|_{\cD(\rho_*)}$ in $\cS(\rho_*)$ exists, which is obtained, in turn, under such an assumption (cf. Lemma \ref{l:zero}). Additionally, Theorem \ref{t:noex} below provides sufficient conditions for minimisers of $J|_{\cD(\rho_*)}$ in $\cS(\rho_*)$ not to exist; as is demonstrated in Section \ref{s:app}, $\rho_*$ does not satisfy \eqref{eq:zero_rho_star}-\eqref{eq:two_etas} under such conditions.
\end{Rem}

We can now state our result about the nonexistence of global minimisers related to \eqref{eq:}-\eqref{eq:L2}. To the best of our knowledge, such a general result does not exist in the current literature.

\begin{Th}\label{t:noex}
Assume $N \ge K \ge 2$, (\ref{a:std}), and $0 < \rho_* < +\infty$. If $2_\# F(t) < f(t) t$ for all $t \ne 0$ or $f(t) t < 2_\# F(t)$ for all $t \ne 0$, then no nontrivial minimisers of $J|_{\cD(\rho_*)}$ exist.
\end{Th}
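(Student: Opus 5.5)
Suppose by contradiction that $u \in \cD(\rho_*)\setminus\{0\}$ is a minimiser of $J|_{\cD(\rho_*)}$. The plan is to exploit the mass-preserving dilation $u_t(x) \coloneqq t^{N/2} u(tx)$, $t>0$, which keeps $|u_t|_2=|u|_2$ and preserves $\cG$-invariance. Both $\int|\D^s u|^2$ and the Hardy term scale as $t^{2s}$, so $[u_t]_\mu^2 = t^{2s}[u]_\mu^2$; the two pieces scale alike because the weight is $|y|^{-2s}$ and $y$ is dilated together with $x$. Hence $u_t\in\cD(\rho_*)$ for all $t>0$, and
\begin{equation*}
\varphi(t)\coloneqq J(u_t) = \frac{t^{2s}}{2}[u]_\mu^2 - t^{-N}\int_{\R^N} F(t^{N/2}u)\,\dx
\end{equation*}
attains its minimum at $t=1$. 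The first step is to show $J(u)=m(\rho_*)=0$. Since $\rho_*<+\infty$, $m(\rho)<0$ for $\rho>\rho_*$. If $m(\rho_*)<0$, then the argument via $\cD(\rho)$ with $\rho$ slightly smaller should give $m(\rho)<0$ for some $\rho<\rho_*$, contradicting the definition of $\rho_*$. Concretely: if $J(w)<0$ with $w\in\cD(\rho_*)$, then $J(\theta w)\to J(w)$ as $\theta\to1^-$ by continuity of $J$ (assumption (\ref{a:std})), and $\theta w\in\cD(\theta^2\rho_*)$. Thus $m(\rho_*)=0$, and therefore $J(u)=0$.

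The second step is to differentiate $\varphi$ at $t=1$. This is legitimate because $J$ is $\cC^1$ (Lemma \ref{lem:L1} for $N=2s$) and $t\mapsto u_t$ is $\cC^1$ into $X^\mu$, at least for $u$ regular enough, or one can differentiate the explicit formula directly. We get
\begin{equation*}
0=\varphi'(1) = s[u]_\mu^2 + N\int F(u)\,\dx - \frac N2\int f(u)u\,\dx ,
\end{equation*}
which is the Pohozaev-type identity $s[u]_\mu^2 = \frac N2\int \bigl(f(u)u - 2F(u)\bigr)\dx$. Combining this with $J(u)=0$, i.e. $\frac12[u]_\mu^2=\int F(u)$, gives $\frac{N}{2s}\int(f(u)u-2F(u)) = 2\int F(u)$. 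Multiplying by $2s/N$ yields
\begin{equation*}
\int_{\R^N} \bigl(f(u)u - 2_\# F(u)\bigr)\,\dx = 0 .
\end{equation*}

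The third step closes the argument. The integrand $f(u)u-2_\#F(u)$ has a strict constant sign wherever $u\ne0$ and vanishes where $u=0$. Since $u\ne0$ on a set of positive measure, the integral is nonzero, which is a contradiction.

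The main technical point is justifying $\varphi'(1)=0$ rigorously. One needs the differentiability of $t\mapsto\int F(t^{N/2}u)t^{-N}$ at $t=1$, which follows by dominated convergence from the growth bounds in (\ref{a:std}) (using Lemma \ref{lem:L1}-type estimates when $N=2s$). Since $u$ itself need not be smooth, I would avoid differentiating $t\mapsto u_t$ in $X^\mu$ and instead differentiate the explicit scalar formula for $\varphi$. The only other thing to verify is that $u\ne0$ on a set of positive measure, which holds because $u$ is nontrivial.
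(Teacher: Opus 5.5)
Your proposal is correct and follows essentially the same route as the paper. You first establish $m(\rho_*)=0$; your $\theta w$ scaling is precisely the left-continuity argument of Lemma \ref{l:cont}, and it also shows that the case $m(\rho_*)=-\infty$, which the paper dismisses separately, cannot occur. You then combine $J(u)=0$ with the dilation identity $M(u)=0$, justified by dominated convergence as in the proof of Lemma \ref{lem:Poh}, to get $\int_{\R^N} f(u)u-2_\# F(u)\,\dx=0$, which contradicts the strict sign condition.
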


\begin{Rem}
As is evident from the proof of Theorem \ref{t:noex}, the condition $2_\# F(t) < f(t) t$ for all $t \ne 0$ (respectively, $f(t) t < 2_\# F(t)$) is used to get that $2_\# \int_{\R^N} F(u) \, \dx < \int_{\R^N} f(u)u \, \dx$ (respectively, $\int_{\R^N} f(u)u \, \dx < 2_\# \int_{\R^N} F(u) \, \dx$) for a certain $u \in X_\cG^\mu \setminus \{0\}$. When $s \ge 1$, this is obtained under the weaker condition that the strict inequality holds along two sequences $(t_n^+) \subset (0,+\infty)$ and $(t_n^-) \subset (-\infty,0)$ such that $\lim_n t_n^\pm = 0$, cf. \cite[Lemma 2.1]{BiegMed}; similar considerations hold for Theorem \ref{t:noex0} and Propositions \ref{pr:Ff}  and \ref{pr:fF}. On the other hand, when $0 < s < \frac12$, the proof of \cite[Lemma 2.1]{BiegMed} fails because $H^s(\R^N)$ contains characteristic functions -- see \cite[Lemma 6.1]{Khor_Rodrigo}.
\end{Rem}

We shall now state the analogues of Theorems \ref{t:neg}, \ref{t:pos}, and \ref{t:noex} for the case $\mu=0$ and $N\ge1$. Their proofs are omitted as similar to those of said theorems when $N > K$. We define
\begin{equation*}
m_0(\rho) \coloneqq \inf\Set{J(u) | u \in \cD_0(\rho)} \quad \text{and} \quad \rho_{*,0} \coloneqq \inf \Set{\rho > 0 | m_0(\rho) < 0}.
\end{equation*}

\begin{Th}\label{t:neg0}
Assume $N \ge 1$, $\mu=0$, (\ref{a:std}), and $F$ is positive somewhere. The following holds.
\begin{enumerate}
    \item We have
    \begin{equation*}
    \rho_{*,0} \le \inf_{\sigma > 0} \left( \frac{1}{2 \sigma} \right)^{N / (2s)} \inf \Set{ \left|\D^s u\right|^{N/s}_{2} \left|u\right|_2^2 | u \in H^s(\R^N) \text{ and } \int_{\R^N} F(u) \, \dx \ge \sigma }.
    \end{equation*}
    \item If (\ref{a:subcz}) holds, then $\rho_{*,0} \le (2 C_{N,2_\#}^0 \underline{\eta}_0)^{-N/(2s)}$, where $C_{N,2_\#}^0$ is given in \eqref{eq:GN} for $p = 2_\#$.
    \item Assume (\ref{a:supq}) holds, $2 C_{N,2_\#}^0 \overline{\eta}_\infty \rho_{*,0}^{2s/N} < 1$, and $\rho \in \bigl( \rho_{*,0} , (2 C_{N,2_\#}^0 \overline{\eta}_\infty)^{-N/(2s)} \bigr)$. If $(u_n)_n \subset \cD_0(\rho)$ and $\lim_n J(u_n) = m_0(\rho)$, then there exist $u \in \cS_0(\rho)$ and $\lambda > 0$ such that, up to subsequences and translations, $u_n \to u$ in $H^s(\R^N)$, $J(u) = \min_{\cD_0(\rho)} J < 0$, and $(\lambda,u)$ is a solution to \eqref{eq:}-\eqref{eq:L2}. Furthermore, every $\widetilde{u} \in \cD_0(\rho)$ such that $J(\widetilde{u}) = m_0(\rho)$ belongs to $\cS_0(\rho)$ and satisfies \eqref{eq:} for some $\widetilde{\lambda} > 0$.
\end{enumerate}
\end{Th}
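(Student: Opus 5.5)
We prove the three parts in order. The main tools are the scaling $u_t(x) \coloneqq t^{N/2} u(tx)$, which preserves the $L^2$ norm, and the Gagliardo--Nirenberg inequality \eqref{eq:GN}
\begin{equation*}
|u|_{2_\#}^{2_\#} \le C_{N,2_\#}^0 |\D^s u|_2^2 |u|_2^{4s/N}.
\end{equation*}

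\textbf{Parts (1) and (2).} For (1), fix $\sigma>0$ and $u \in H^s(\R^N)$ with $\int F(u) \ge \sigma$. Consider $v(x) \coloneqq u(x/\tau)$, for which
\begin{equation*}
|v|_2^2 = \tau^N |u|_2^2, \quad |\D^s v|_2^2 = \tau^{N-2s}|\D^s u|_2^2, \quad \int F(v) = \tau^N \int F(u).
\end{equation*}
Hence
\begin{equation*}
J(v) \le \tfrac12 \tau^{N-2s} |\D^s u|_2^2 - \tau^N \sigma,
\end{equation*}
which is negative as soon as $\tau^{2s} > |\D^s u|_2^2/(2\sigma)$. At this threshold,
\begin{equation*}
|v|_2^2 = \left( \frac{|\D^s u|_2^2}{2\sigma} \right)^{N/(2s)} |u|_2^2 = \left( \frac{1}{2\sigma} \right)^{N/(2s)} |\D^s u|_2^{N/s} |u|_2^2 .
\end{equation*}
So for every $\rho$ strictly larger than this quantity we can choose $\tau$ slightly larger than the threshold with $v\in\cD_0(\rho)$ and $J(v)<0$, i.e.\ $m_0(\rho)<0$. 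Taking infima over $u$ and $\sigma$ gives the bound on $\rho_{*,0}$.

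For (2), fix $\rho > (2C^0_{N,2_\#}\underline\eta_0)^{-N/(2s)}$ and choose $\eta<\underline\eta_0$ close to $\underline\eta_0$ so that still $2C^0_{N,2_\#}\eta\rho^{2s/N}>1$. By (\ref{a:subcz}), $F(t)\ge \eta|t|^{2_\#}$ for $|t|\le\delta$. Take a near-optimiser $w$ of \eqref{eq:GN} with $|w|_2^2=\rho$ (optimal functions can be approximated by bounded ones; at worst use a truncation) and shrink its amplitude via the mass-preserving scaling $w_t$ with $t\to0$, so that $|w_t|_\infty\le\delta$. Then
\begin{equation*}
J(w_t) \le \tfrac12 t^{2s}|\D^s w|_2^2 - \eta\, t^{2s}|w|_{2_\#}^{2_\#} \le t^{2s}|\D^s w|_2^2\left(\tfrac12 - \eta C^0_{N,2_\#}\rho^{2s/N}(1-\varepsilon)\right) < 0 .
\end{equation*}
Since $\rho$ was arbitrary above the stated value, $\rho_{*,0}\le(2C^0_{N,2_\#}\underline\eta_0)^{-N/(2s)}$.

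\textbf{Part (3).} First, coercivity. By (\ref{a:std})--(\ref{a:subci}), for any $\eta>\overline\eta_\infty$ we have $F(t)\le \varepsilon t^2+\eta|t|^{2_\#}+C_\varepsilon|t|^{q}$ with $q\in(2_\#,2^*)$ (Trudinger--Moser type control when $N=2s$). Combined with \eqref{eq:GN} and $\rho<(2C^0\overline\eta_\infty)^{-N/(2s)}$, this shows $J$ is bounded below on $\cD_0(\rho)$ and that minimising sequences are bounded in $H^s$. Since $\rho>\rho_{*,0}$ we have $m_0(\rho)<0$. Vanishing is excluded by Lions' lemma: if $u_n\to0$ in $L^q$, then only the $2_\#$-term survives, and the smallness condition forces $\liminf J(u_n)\ge0$. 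Hence after translations $u_n\rightharpoonup u\ne0$.

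Next, use Brezis--Lieb splitting:
\begin{equation*}
J(u_n)=J(u)+J(u_n-u)+o(1), \qquad |u_n|_2^2=|u|_2^2+|u_n-u|_2^2+o(1).
\end{equation*}
Then use strict subadditivity: $m_0(\alpha) \le \frac{\alpha}{\beta} m_0(\beta)$ for $\alpha\ge\beta$, with strict inequality when $m_0(\beta)<0$ is attained; this follows from the scaling $u\mapsto \theta^{1/2}u(\cdot)$ combined with the $x$-dilation used in (1). Together these rule out dichotomy, since $m_0$ is nonincreasing and the remainder contributes $J(u_n-u)\ge o(1)$ (by smallness of the remainder's mass relative to the GN threshold). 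This gives $J(u)\le m_0(\rho)$, and then strong convergence in $L^2$ and in $\dot H^s$.

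\textbf{The constraint is saturated, and $\lambda>0$.} Suppose a minimiser had $|u|_2^2<\rho$. Then it would be a free critical point, hence satisfy the Pohozaev identity
\begin{equation*}
s|\D^s u|_2^2 = N\int\bigl(\tfrac12 f(u)u - F(u)\bigr);
\end{equation*}
also, the map $\theta\mapsto J(\theta u)$ would have a local minimum at $\theta=1$. Alternatively, take the dilation $u(\cdot/\tau)$ with $\tau>1$ slightly: since $J(u)<0$, the computation $\frac{d}{d\tau}J(u(\cdot/\tau))|_{\tau=1} = \frac{N-2s}{2}|\D^s u|^2 - N\int F = NJ(u) - s|\D^s u|^2 < 0$ lowers $J$ while staying in $\cD_0(\rho)$, contradicting minimality. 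Hence $u \in \cS_0(\rho)$, and the Lagrange multiplier rule gives $\lambda$ (a Kuhn--Tucker multiplier, so $\lambda\ge0$ from minimisation over the ball). Strict positivity comes from combining the equation tested with $u$ with Pohozaev: $\lambda\rho = \ldots$, where the same derivative identity shows $\lambda\rho = -\frac{2}{N}\cdot$(negative quantity built from $J(u)<0$) $>0$. The same argument applies to any $\widetilde u$ attaining $m_0(\rho)$.

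The main obstacle is excluding dichotomy, i.e.\ establishing the strict subadditivity of $m_0$ together with $m_0<0$ on $(\rho_{*,0},\rho)$ under merely continuous $f$.
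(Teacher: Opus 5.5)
Parts (1) and (2) and your saturation argument are correct, but the exclusion of dichotomy in part (3) rests on an unjustified claim and needs repair.

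\textbf{The gap: excluding dichotomy in part (3).} You claim the remainder satisfies $J(u_n-u)\ge o(1)$ ``by smallness of the remainder's mass relative to the GN threshold'', and from this you deduce $J(u)\le m_0(\rho)$. A priori you only know $\limsup_n|u_n-u|_2^2\le\rho-|u|_2^2$. This can exceed $\rho_{*,0}$, since nothing prevents $u$ from carrying little mass. At such masses $m_0<0$, so the remainder could contain a second negative-energy bump. The inequality is therefore what has to be proved, not an input.

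What you do have is $\lim_n J(u_n-u)\ge m_0(\rho'')$, where $\rho''\coloneqq\lim_n|u_n-u|_2^2$. This follows from the analogue of Lemma~\ref{l:cont}, or from the normalisation in \cite[Lemma 2.4]{Shibata}. If $\rho''>0$, then with $\rho_u\coloneqq|u|_2^2$,
\begin{equation*}
m_0(\rho)=J(u)+\lim_n J(u_n-u)\ge m_0(\rho_u)+m_0(\rho'')\ge m_0(\rho_u+\rho'')\ge m_0(\rho).
\end{equation*}
Hence $m_0(\rho_u)$ is attained at $u\ne0$, and strict subadditivity gives a contradiction. So $\rho''=0$. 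Only at this point does $u_n-u\to0$ in $L^2$ and $L^{2_\#}$ give $\liminf_n J(u_n-u)\ge0$.

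Your version of strictness (``when $m_0(\beta)<0$ is attained'') is not enough here, because $m_0(\rho_u)$ may equal $0$. You need strictness whenever a level is attained at a nonzero function, as in Lemma~\ref{lem:sub+}. This holds because $J\bigl(u(\sigma^{-1/N}\cdot)\bigr)<\sigma J(u)$ for every $u\ne0$ and $\sigma>1$, whatever the sign of $J(u)$. Only this $x$-dilation is needed. The amplitude scaling $\theta^{1/2}u$ would require structure on $F$ that (\ref{a:std}) does not provide. For comparison, the paper's Lemma~\ref{l:Lcpt} runs the same idea by excluding a second nonvanishing profile; this gives $u_n\to u$ in $L^p$ for $2<p<2^*$, and it concludes by lower semicontinuity.

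\textbf{What is already fine.} Part (1) is the paper's dilation argument from Lemma~\ref{lem:neg}(i). In part (2), your bounded near-optimisers of \eqref{eq:GN} avoid the estimate \eqref{eq:noF-} on $F_-$ and the Fatou step the paper needs with the exact optimiser. Obtain them from density of $\cC_c^\infty(\R^N)$ in $H^s(\R^N)$ and continuity of $\cR_0$, not from truncation, which leaves $H^s$ for larger $s$. Your proof that $|u|_2^2=\rho$, via $J(u(\cdot/\tau))<\tau^N J(u)<J(u)$ for $\tau>1$, is more direct than the paper's Poho\v{z}aev/Karush--Kuhn--Tucker route.

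\textbf{Two smaller repairs.}

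First, your coercivity bound $F(t)\le\varepsilon t^2+\eta|t|^{2_\#}+C_\varepsilon|t|^q$ with $q>2_\#$ does not give boundedness from below. By \eqref{eq:GN}, $|u|_q^q$ is controlled only by $|\D^s u|_2^{q\delta_q}$ with $q\delta_q>2$. Absorb the compact intermediate range into the quadratic term, $F(t)\le c_\varepsilon t^2+(\overline{\eta}_\infty+\varepsilon)|t|^{2_\#}$, as in Lemma~\ref{l:coer}.

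Second, for $\lambda>0$, a Poho\v{z}aev identity for free critical points is not available at this level of generality. Differentiating along $\tau\mapsto u(\cdot/\tau)$ is only formal, since this curve need not be differentiable in $H^s$. Obtain Poho\v{z}aev as in Lemma~\ref{lem:Poh}, from the minimality of $t\mapsto J(t\star u)$ at $t=1$. Combining it with the Nehari identity gives $\frac N2\lambda\rho=s|\D^s u|_2^2-NJ(u)>0$. Alternatively, staying with dilations: if $\lambda=0$, then $J(\theta u)=J(u)+o(1-\theta)$, so $m_0(\theta^2\rho)\le m_0(\rho)+o(1-\theta)$. Together with $m_0(\rho)\le\theta^{-2}m_0(\theta^2\rho)$, this contradicts $m_0(\rho)<0$ as $\theta\to1^-$.
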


Observe that, when $N=1$ and $s \in \mathbb{N}$, Theorem \ref{t:neg0} improves \cite[Theorem 1.4]{Schino_Smyrnalis}.
Next, we define
\begin{equation*}
\cB_0(\rho) \coloneqq \Set{ u \in \cD_0(\rho) | \left|\D^s u\right|^2_{2} > \gamma_{*,0} },
\end{equation*}
where $\gamma_{*,0}>0$ is given in Remark \ref{r:gamma} for $\rho = \rho_{*,0}$, and
\begin{equation*}
m_{*,0}(\rho) \coloneqq \inf \Set{ J(u) | u \in \cB_0(\rho) }.
\end{equation*}

\begin{Th}\label{t:pos0}
Assume $N\ge1$, $\mu=0$, (\ref{a:std})--(\ref{a:subci}), 
\begin{equation}\label{eq:rho0etas}
0 < \rho_{*,0} < (2 C_{N,2_\#}^0 \min \{ \overline{\eta}_\infty , \overline{\eta}_0 \})^{-N/(2s)},
\end{equation}
and let $\overline{\rho}_{*,0}$ be given by Remark \ref{rem:A}. If $\rho \in (\overline{\rho}_{*,0} , \rho_{*,0}]$, $(u_n)_n \subset \cB_0(\rho)$, and $\lim_n J(u_n) = m_{*,0}(\rho)$, then there exists $u \in \cB_0(\rho) \cap \cS_0(\rho)$ and $\lambda > 0$ such that, up to subsequences and translations, $u_n \to u$ in $H^s(\R^N)$ and $(\lambda,u)$ is a solution to \eqref{eq:}-\eqref{eq:L2}.
\end{Th}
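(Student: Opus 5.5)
The proof will follow the structure of Theorem~\ref{t:pos} with $\mu=0$, $\cG$ trivial, and full translations in $\R^N$ in place of translations in $z$. The Gagliardo--Nirenberg inequality \eqref{eq:GN} with constant $C^0_{N,2_\#}$ replaces \eqref{eq:GNsym}.

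\textbf{Step 1: a minimiser at $\rho_{*,0}$.} By \eqref{eq:rho0etas} and the analogue of Remark~\ref{rem:rho*}, we have $m_0(\rho)=0$ for $\rho\le\rho_{*,0}$. To show that $m_0(\rho_{*,0})$ is attained by a nonzero $u_*\in\cS_0(\rho_{*,0})$, I take $\rho_n\downarrow\rho_{*,0}$ and minimisers $u_n\in\cS_0(\rho_n)$ of $m_0(\rho_n)<0$, which exist by Theorem~\ref{t:neg0}(3). The bound $2C^0_{N,2_\#}\overline\eta_\infty\rho_n^{2s/N}<1$ gives boundedness in $H^s$. The condition $2C^0_{N,2_\#}\overline\eta_0\rho_{*,0}^{2s/N}<1$ gives a lower bound $|\D^s u_n|_2^2\ge\gamma_{*,0}'>0$, because near zero $F_+$ is dominated by the mass-critical term with a good constant; this is the content of Remark~\ref{r:gamma} that defines $\gamma_{*,0}$.

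Nonvanishing then follows from Lions' lemma. If the sequence vanished, then $\int F_+(u_n)\to0$ by (\ref{a:supq}) and the subcritical bound at infinity, which would force $J(u_n)\ge c>0$ and contradict $J(u_n)<0$. After translation, $u_n\rightharpoonup u_*\neq0$. The Brezis--Lieb splitting, combined with $m_0\equiv0$ on $[0,\rho_{*,0}]$ and the Gagliardo--Nirenberg lower bound on the remainder, shows that $J(u_*)=0$ and that $u_*$ carries full mass.

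\textbf{Step 2: the constrained problem for $\rho<\rho_{*,0}$.} For $\rho\in(\overline\rho_{*,0},\rho_{*,0}]$, Remark~\ref{rem:A} should provide a competitor in $\cB_0(\rho)$, namely a dilation or rescaling of $u_*$ such as $\sqrt{\rho/\rho_{*,0}}\,u_*$, whose energy is small. It should also provide an energy gap of the form
\begin{equation*}
m_{*,0}(\rho)<\inf\Set{J(u) | u\in\cD_0(\rho),\ |\D^s u|_2^2=\gamma_{*,0}}.
\end{equation*}
Granting this, a minimising sequence in $\cB_0(\rho)$ stays a positive distance from $\partial\cB_0(\rho)$ in the $|\D^s\cdot|_2$ direction. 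This is the step I expect to be hardest, and it is exactly where the threshold $\overline\rho_{*,0}$ enters. I would try to verify it by estimating $J$ on the sphere $|\D^s u|_2^2=\gamma_{*,0}$ from below via \eqref{eq:GN} and (\ref{a:subci}), and comparing the result with $J$ of the scaled $u_*$.

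\textbf{Step 3: compactness and the Lagrange multiplier.} Boundedness of the minimising sequence holds as before. Nonvanishing holds since $|\D^s u_n|_2^2>\gamma_{*,0}$ and $J(u_n)$ is bounded above by the competitor level, while vanishing would give $J(u_n)\ge\frac12\gamma_{*,0}+o(1)$, exceeding $m_{*,0}(\rho)$.

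After translation, $u_n\rightharpoonup u\neq0$, and I split $u_n=u+v_n$. Brezis--Lieb gives
\begin{equation*}
J(u_n)=J(u)+J(v_n)+o(1),
\end{equation*}
with $|u|_2^2+\lim|v_n|_2^2\le\rho$. Since $\rho\le\rho_{*,0}$, we have $J(v_n)\ge o(1)$. If $u\notin\cB_0(\rho)$, the energy gap from Step 2 gives a contradiction, so $u\in\cB_0(\rho)$. Hence $J(u)\le m_{*,0}$, $J(v_n)\to0$, and since $v_n\rightharpoonup0$ with small energy, the Gagliardo--Nirenberg bound gives $|\D^s v_n|_2\to0$. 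Together with the mass bound, this yields strong convergence in $H^s$.

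Since $u$ lies in the interior of $\cB_0(\rho)$ in the gradient direction, it is a local minimiser of $J$ on $\cD_0(\rho)$. The Lagrange multiplier rule then gives $\lambda\ge0$, with $\lambda=0$ possible only if the mass constraint is inactive. I rule out $\lambda=0$ and $|u|_2^2<\rho$ by combining the Pohozaev identity with the Nehari identity: together they give
\begin{equation*}
|\D^s u|_2^2=\frac{N}{2s}\int\bigl(f(u)u-2F(u)\bigr),
\end{equation*}
and increasing the mass via $t\mapsto tu$ for $t>1$ would strictly decrease $J$, contradicting local minimality. Therefore $u\in\cS_0(\rho)$ and $\lambda>0$, which follows as in Theorem~\ref{t:neg}(3) by testing the equation with $u$.
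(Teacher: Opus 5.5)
Your three steps mirror the route the paper takes for $\mu=0$ (Lemmas \ref{l:zero}, \ref{lem:A}, \ref{l:cpt}). However, the last step, ruling out $\lambda=0$, has a genuine gap. The Karush--Kuhn--Tucker rule gives only $\lambda\ge0$, and $\lambda=0$ is allowed even when the constraint is active, so full mass alone would not give $\lambda>0$. Your mechanism also cannot work. If $\lambda=0$, then $u$ is a free critical point of $J$, so $\frac{\mathrm{d}}{\mathrm{d}t}J(tu)\big|_{t=1}=|\D^s u|_2^2-\int_{\R^N}f(u)u\,\dx=-\lambda|u|_2^2=0$, and no strict decrease along $t\mapsto tu$ (or along any other path) follows at first order. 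The identity $|\D^s u|_2^2=\frac{N}{2s}\int_{\R^N}f(u)u-2F(u)\,\dx$ does not sign this derivative. The argument of Theorem \ref{t:neg} (3) does not transfer either, since it uses $m(\rho)<0$, whereas here $m_{*,0}(\rho)\ge m_0(\rho)=0$. What is missing is the energy-level argument. Choose $\overline{\rho}_{*,0}$ so close to $\rho_{*,0}$ that the competitor $\sqrt{\rho/\rho_{*,0}}\,u_*$ lies in $\cB_0(\rho)$ and gives $m_{*,0}(\rho)<\min\{\frac sN,\frac12(\frac12-C^0_{N,2_\#}\overline{\eta}_0\rho^{2s/N})\}\gamma_{*,0}$. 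Then, if $\lambda=0$, the Poho\v{z}aev identity of Lemma \ref{lem:Poh} gives $J(u)=\frac sN|\D^s u|_2^2>\frac sN\gamma_{*,0}>m_{*,0}(\rho)=J(u)$, which is absurd. Since $|u|_2^2<\rho$ forces $\lambda=0$, this yields full mass and $\lambda>0$ at once. Only then do you get $u_n\to u$ in $L^2(\R^N)$, hence in $L^{2_\#}(\R^N)$ by \eqref{eq:GN}, hence in $H^s(\R^N)$ from $J(u_n)\to J(u)$. Your strong-convergence claim before the multiplier step is therefore premature. First, $|\D^s v_n|_2\to0$ gives no control on $|v_n|_2$ while $|u|_2^2<\lim_n|u_n|_2^2$ is still possible. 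Second, $J(v_n)\to0$ yields $|\D^s v_n|_2\to0$ via \eqref{eq:GN} only if $|\D^s v_n|_2^2\le4\gamma_{*,0}$ or $(v_n)_n$ vanishes.

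The step $u\in\cB_0(\rho)$ is only asserted, and the gap you formulate, relative to $\inf\{J(u) : u\in\cD_0(\rho),\ |\D^s u|_2^2=\gamma_{*,0}\}$, is not sufficient. The paper places $m_{*,0}(\rho)$ strictly below the explicit lower bound of Lemma \ref{l:gamma} (Remark \ref{r:gamma}), which holds on the whole shell $\gamma_{*,0}<|\D^s u|_2^2\le4\gamma_{*,0}$. This forces $|\D^s u_n|_2^2>4\gamma_{*,0}$ for large $n$, so $|\D^s u|_2^2\le\gamma_{*,0}$ would give $|\D^s v_n|_2^2\ge2\gamma_{*,0}$. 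If $|u|_2^2=\rho$, then $|v_n|_2\to0$ and $J(v_n)\ge\gamma_{*,0}+o(1)$, which is too large. If $|u|_2^2<\rho$, then $v_n\in\cB_0(\rho)$ eventually, so $J(v_n)\ge m_{*,0}(\rho)$. At the same time $J(u)>0$, because $0<|u|_2^2<\rho_{*,0}$ and $m_0$ is attained only at $0$ there (Lemma \ref{l:noatt}); this gives a contradiction. That last fact comes from strict subadditivity: the dilation $u(\sigma^{-1/N}\cdot)$ with $\sigma>1$ strictly lowers a zero energy. It is also what gives full mass in your Step 1, not a Gagliardo--Nirenberg bound on the remainder.
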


Like for \eqref{eq:zero_rho_star}, \eqref{eq:rho0etas} implies $F$ is positive somewhere.

\begin{Th}\label{t:noex0}
Assume $N \ge 1$, $\mu=0$, (\ref{a:std}), and $0 < \rho_{*,0} < +\infty$. If $2_\# F(t) < f(t) t$ for all $t \ne 0$ or $f(t) t < 2_\# F(t)$ for all $t \ne 0$, then no nontrivial minimisers of $J|_{\cD_0(\rho_{*,0})}$ exist.
\end{Th}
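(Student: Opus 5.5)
Let $\rho := \rho_{*,0}$ and suppose, by contradiction, that $u \in \cD_0(\rho)\setminus\{0\}$ satisfies $J(u) = m_0(\rho)$. We first show $m_0(\rho)=0$. Clearly $m_0(\rho)\le 0$. If we had $m_0(\rho)<0$, pick $v\in\cD_0(\rho)$ with $J(v)<0$. Then $\tau\mapsto J(\tau v)$ is continuous, so $J(\tau v)<0$ for some $\tau<1$. Since $|\tau v|_2^2<\rho$, this gives $m_0(\tau^2\rho)<0$, contradicting the definition of $\rho_{*,0}$. Hence $J(u)=0$.

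The key tool is the mass-preserving dilation $u_t(x) := t^{N/2}u(tx)$. It satisfies $|u_t|_2=|u|_2$, $|\D^s u_t|_2^2=t^{2s}|\D^s u|_2^2$ and $\int F(u_t)\,\dx = t^{-N}\int F(t^{N/2}u)\,\dx$. Set $\varphi(t):=J(u_t)$. Because $u_t\in\cD_0(\rho)$ for all $t>0$, we have $\varphi\ge m_0(\rho)=0=\varphi(1)$, so $t=1$ is a minimum point. Differentiating at $t=1$ (justified by (\ref{a:std}), via dominated convergence or the $\cC^1$ regularity of $J$ from Lemma \ref{lem:L1} composed with the $\cC^1$ curve $t\mapsto u_t$) gives
\begin{equation*}
0=\varphi'(1)= s\,|\D^s u|_2^2 + N\int_{\R^N} F(u)\,\dx - \frac N2\int_{\R^N} f(u)u\,\dx .
\end{equation*}
Combining this with $J(u)=0$, i.e. $|\D^s u|_2^2 = 2\int F(u)\,\dx$, yields
\begin{equation*}
2s\int F(u) + N\int F(u) - \frac N2\int f(u)u = 0, \quad\text{i.e.}\quad \int_{\R^N} f(u)u\,\dx = 2_\#\int_{\R^N}F(u)\,\dx .
\end{equation*}

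Since $u\ne0$, the set $\{u\ne0\}$ has positive measure, and the continuous function $t\mapsto f(t)t-2_\# F(t)$ has a strict constant sign on $\R\setminus\{0\}$. The integrand $f(u)u-2_\#F(u)$ is therefore strictly signed on $\{u\neq 0\}$ and zero elsewhere, so $\int_{\R^N} (f(u)u - 2_\# F(u))\,\dx \ne 0$. This contradicts the identity above, so no nontrivial minimiser exists.

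The main point needing care is the differentiability of $\varphi$ at $t=1$, especially when $N=2s$ (exponential growth) and for general $s$. We handle it through the $\cC^1$ property of $J$ on $H^s(\R^N)$: the map $t\mapsto u_t$ is differentiable into $H^s$ only for sufficiently regular $u$, so as a fallback we use the fact that $\varphi(t)\ge\varphi(1)$ for $t$ near $1$. We then compute one-sided difference quotients of $t^{-N}\int F(t^{N/2}u)$ directly, using the mean value theorem and the growth bounds in (\ref{a:std}) together with the Sobolev/Moser--Trudinger integrability of $u$ for domination. The remaining terms are explicit powers of $t$.
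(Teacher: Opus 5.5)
Your proof is correct and follows the paper's route. First you show $m_0(\rho_{*,0})=0$; your amplitude-scaling argument is precisely the left-continuity argument in the proof of Lemma \ref{l:cont}, and it even shows that the case $m_0(\rho_{*,0})=-\infty$, which the paper dismisses separately, cannot occur. Then minimality of $t\mapsto J(t\star u)$ at $t=1$ gives $M(u)=0$ as in \eqref{eq:D=0}, so $J(u)-\frac{1}{2s}M(u)=\frac{N}{4s}\int_{\R^N} f(u)u-2_\#F(u)\,\dx$ must vanish, which contradicts the strict sign condition.

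The differentiability at $t=1$ is handled in Lemma \ref{lem:Poh} exactly by your fallback, not through a $\cC^1$ curve in $H^s(\R^N)$. The paper uses dominated convergence for the $t$-derivative of $t^{-N}F(t^{N/2}u)$ in the three regimes $N<2s$, $N=2s$ (via Lemma \ref{lem:L1}) and $N>2s$.
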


\begin{Rem}
When $s=1$, arguing as in \cite[Proof of Theorem 1.4]{JL_min} or \cite[Proof of Lemma 3.7]{JL_M3AS}, we can prove that the global minimiser in Theorems \ref{t:neg} and \ref{t:neg0} and the local minimisers in Theorems \ref{t:pos} and \ref{t:pos0} are signed. Furthermore, if $N C_{N,2_\#} \overline{\eta} \rho^{2s/N} < 2s$, where $\overline{\eta} \coloneqq \limsup_{t \to 0} \bigl( f(t)t - 2F(t) \bigr)_+ |t|^{-2_\#}$, then we can argue as in \cite[Proof of Theorem 1.2]{JL_M3AS} and prove that, in the frameworks where every solution in $X_\cG^\mu$ to \eqref{eq:} satisfies the Poho\v{z}aev identity (respectively, every solution in $H^s(\R^N)$ satisfies the Poho\v{z}aev identity), the local minimiser given in Theorem \ref{t:pos} is a least-energy solution in $X_\cG^\mu$ (respectively, the local minimiser given in Theorem \ref{t:pos0} is a least-energy solution in $H^s(\R^N)$).
\end{Rem}

Except for part of Section \ref{s:app}, this paper is concerned with the so-called mass-subcritical and mass-critical regimes, i.e., when $J$ is bounded below over $\cS(\rho)$, respectively, for all or small values of $\rho$, depending on the growth of $F$ at infinity with respect to the power $2_\#$ -- cf. Lemma \ref{l:coer} below and \eqref{eq:etai}. This scenario was first investigated, for $s=1$ and $\mu=0$, in \cite{Lions84_2,Stuart82} and more recently in \cite{JL_M3AS,JL_min,JZZ_2024_JMPA,Schino,Shibata,Soave_JDE}. For $s>0$, the literature is more meagre. Relative to the mass-subcritical or mass-critical regimes, we mention \cite{CGT,JZh,KT,LZhZh} ($0<s<1$ and $N \ge 2$) and \cite{Schino_Smyrnalis} ($s \in \mathbb{N}$ and $N=1$); in all of them, $\mu=0$. Concerning the case $\mu \ne 0$, at least if we focus only on these two regimes, \cite{LW,SchinoPHD} seem to be the only pieces of work, and only for $s=\mu=1$ (we also point out \cite{DHZ}, which is concerned with systems).

We point out that $\cD(\rho)$ was first used in \cite{BiegMed} in the mass-supercritical regime, that is, when $J$ is unbounded below over $\cS(\rho)$ for all values of $\rho$. Later, it was adopted in different frameworks, such as the mass-subcritical and mass-critical regimes \cite{Schino} as well as nonlinearities that have a mass-subcritical growth at the origin and a mass-supercritical one at infinity \cite{BdS}; it was also utilised for singular polyharmonic operators, e.g., in \cite{BMS,BMiS}. In this paper, its use simplifies the proofs of Theorems \ref{t:neg} and \ref{t:pos} a little, as fewer intermediate results are needed.

The paper is structured as follows. A few preliminary results are given in Section \ref{s:prel}. In Section \ref{s:GN}, a new inequality \`a la Gagliardo--Nirenberg is stated and proved; the results contained therein are used throughout the paper. In Section \ref{s:norm}, the various outcomes about \eqref{eq:}-\eqref{eq:L2} are proved. Section \ref{s:app} contains scenarios where $\rho_*$ can be computed, shedding some light on the importance of \eqref{eq:zero_rho_star} and \eqref{eq:two_etas} and the link between Theorems \ref{t:pos} and \ref{t:noex}. In Section \ref{s:curl}, results about cylindrically symmetric curl-curl problems \eqref{eq:curl}-\eqref{eq:cyl} (and generalisations to higher dimensions) paired with constraints in the spirit of \eqref{eq:L2} are discussed. Appendix \ref{App} concerns a comparison between the two upper bounds for $\rho_*$ provided in Theorem \ref{t:neg}. To the best of our knowledge, Theorem \ref{t:noex} and the content of Section \ref{s:app} are completely new, at least in the degree of generality provided in this paper.

\subsection*{Notations} If $\mathfrak{f}$ is a real-valued function, then we set $\mathfrak{f}_\pm \coloneqq \max \{ \pm \mathfrak{f} , 0 \}$. When $N>K$, for $z \in \R^{N-K}$, we will denote $z' \coloneqq (0,z) \in \R^N$. For $R>0$ and $x \in \R^N$, $B_R(x)$ denotes the open ball with radius $R$ and centre $x$. For $u \in L^2(\R^N)$ and $t>0$, we denote $t \star u \coloneqq t^{N/2} u(t \cdot)$. When we write, e.g., that $u_n \to u$, we imply that the limit is taken as $n \to +\infty$. The symbol $\lesssim$ stands for the inequality $\le$ up to a positive multiplicative constant.

\section{Preliminaries}\label{s:prel}

The first part of this section contains useful preliminary results in various dimensions, albeit sometimes with restrictions. Then, in Subsection \ref{ss:NtwoS}, we will focus on the case $N=2s$.

\begin{Lem}\label{lem:Hardy}
	If $K>2s$, then \eqref{eq:emb} holds.
\end{Lem}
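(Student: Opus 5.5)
Our plan is to reduce \eqref{eq:emb} to the sharp fractional Hardy (Herbst) inequality in $\R^K$, applied fibrewise in $z$ and then integrated over $z\in\R^{N-K}$. The input is Herbst's inequality: for $0<2s<K$ and $v\in\cD^{s,2}(\R^K)$,
\begin{equation*}
\int_{\R^K} \frac{v^2}{|y|^{2s}}\,\dy \le \left( \frac{\Gamma(\frac{K-2s}{4})}{2^s \Gamma(\frac{K+2s}{4})} \right)^2 \int_{\R^K} |\eta|^{2s} |\cF_K v(\eta)|^2 \, \mathrm{d}\eta .
\end{equation*}
We take this as known; it follows from the fact that $|y|^{-s}(-\Delta_y)^{-s/2}$ is bounded on $L^2(\R^K)$ with norm $2^{-s}\Gamma(\frac{K-2s}{4})/\Gamma(\frac{K+2s}{4})$. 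We would also note the identity $\int_{\R^N}|\D^s u|^2\,\dx=\int_{\R^N}|\xi|^{2s}|\cF u|^2\,\mathrm{d}\xi$, which holds in all three cases of the definition of $\D^s$ by Plancherel. For integer $s$ this is the usual computation, since the symbol of $\nabla\Delta^{(s-1)/2}$ has squared modulus $|\xi|^{2s}$.

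We first work with $u\in\cC_c^\infty(\R^N)$ and write $\xi=(\eta,\zeta)\in\R^K\times\R^{N-K}$. For a.e.\ fixed $z$, we apply Herbst's inequality to $v=u(\cdot,z)$ and integrate in $z$. By Plancherel in $z$, the right-hand side becomes $C\int_{\R^N}|\eta|^{2s}|\cF u(\eta,\zeta)|^2\,\mathrm{d}\eta\,\mathrm{d}\zeta$. Here $\cF_K$ in $y$ followed by $\cF_{N-K}$ in $z$ gives $\cF$, and the partial Fourier transform in $z$ commutes with multiplication by $|y|^{-s}$ and with the multiplier $|\eta|^{2s}$, so the Hardy inequality can be applied to $\cF_{N-K}u(\cdot,\zeta)$ directly. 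Since $|\eta|^{2s}\le|\xi|^{2s}$, we obtain \eqref{eq:emb} for test functions. When $K=N$ this step is empty, and the argument is just Herbst's inequality itself.

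Next we extend the inequality to all of $\cD^{s,2}(\R^N)$ by density. Take $u_n\in\cC_c^\infty$ with $\D^s u_n\to\D^s u$ in $L^2$ and $u_n\to u$ in $L^{2^*}$. Passing to a subsequence converging a.e.\ and applying Fatou's lemma on the left-hand side gives the result. The step we expect to need the most care is exactly this density of $\cC_c^\infty$ in $\cD^{s,2}(\R^N)$ for general $s$, including $s\ge1$ non-integer and $N$ possibly less than or equal to $2s$. Note, however, that $K>2s$ forces $N>2s$, so $2^*$ is finite. We would handle density by the standard cutoff-and-mollification argument on the Fourier side, using that elements of $L^{2^*}$ have no polynomial component. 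We would also verify the constant carefully: Herbst's sharp constant is stated in terms of $(-\Delta)^{s/2}$, and its square matches the one displayed in \eqref{eq:emb}.
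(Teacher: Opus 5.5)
Your proposal is correct and follows essentially the same route as the paper: apply Herbst's sharp inequality in $\R^K$ fibrewise in $z$, use Plancherel in $z$ (the partial Fourier transforms in $y$ and $z$ compose to $\cF$), bound $|\eta|^{2s}\le|\xi|^{2s}$, and reduce to smooth functions by density. Your explicit Fatou-based density step is more careful than the paper's, which only says one may assume $u$ is a Schwartz function by density.
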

\begin{proof}
	If $N=K$, then the statement is proved in \cite[Theorem 2.5]{Herbst} (see also \cite[Subsection 1.1]{MY}). From now on, let us suppose $N>K$. We denote $\cF_y$ (respectively, $\cF_z$) the partial Fourier transform with respect to $y \in \R^K$ (respectively, $z \in \R^{N-K}$). For $\xi \in \R^N$, we will write $\xi = (\upsilon,\zeta) \in \R^K \times \R^{N-K}$.
    In virtue of the Fubini--Tonelli theorem, it is a matter of explicit computations that
    \begin{equation*}
    \cF(u)(\xi) = \cF_z\bigl(\cF_y(u)(\upsilon)\bigr)(\zeta) \quad \text{for all } u \in \cS(\R^N),
    \end{equation*}
    where $\cS(\R^N)$ is the Schwarz class. Now take $u \in \cD^{s,2}(\R^N)$. By density, we can assume $u \in \cS(\R^N)$. Then, using again the Fubini--Tonelli theorem and the result for $N=K$,
	\begin{align*}
		\int_{\R^N} \frac{u^2(x)}{|y|^{2s}} \, \dx & = \int_{\R^{N-K}} \int_{\R^K} \frac{u^2(y,z)}{|y|^{2s}} \, \dy \, \dz\\
		& \le \left( \frac{\Gamma\left(\frac{K-2s}{4}\right)}{2^s\Gamma\left(\frac{K+2s}{4}\right)} \right)^2 \int_{\R^{N-K}} \int_{\R^K} \left(\sum_{j=1}^K \xi_j^2\right)^{s} \left|\cF_y\bigl(u(\cdot,z)\bigr)(\upsilon)\right|^2 \, \mathrm{d}\upsilon \, \dz\\
		& = \left( \frac{\Gamma\left(\frac{K-2s}{4}\right)}{2^s\Gamma\left(\frac{K+2s}{4}\right)} \right)^2 \int_{\R^{N-K}} \int_{\R^K} \left(\sum_{j=1}^K \xi_j^2\right)^{s} \left|\cF_z\bigl(\cF_y(u)(\upsilon)\bigr)(\zeta)\right|^2 \, \mathrm{d}\upsilon \, \mathrm{d}\zeta\\
		& \le \left( \frac{\Gamma\left(\frac{K-2s}{4}\right)}{2^s\Gamma\left(\frac{K+2s}{4}\right)} \right)^2 \int_{\R^N} \left(\sum_{j=1}^N \xi_j^2\right)^{s} \left|\cF(u)(\xi)\right|^2 \, \mathrm{d}\xi\\
		& = \left( \frac{\Gamma\left(\frac{K-2s}{4}\right)}{2^s\Gamma\left(\frac{K+2s}{4}\right)} \right)^2 \int_{\R^N} |\xi|^{2s} |\cF(u)|^2 \, \mathrm{d}\xi.\qedhere
	\end{align*}
\end{proof}

We will need the following variants of Lions' lemma \cite{Lions84_2}.

\begin{Lem}\label{lem:Lions}
	Let $N \ge 1$ and $\Phi \in \cC(\R;\R)$ such that $\lim_{t \to 0} \Phi(t) t^{-2} = 0$. If $N \ge 2s$, assume, additionally, that
	\begin{equation*}
		\begin{cases}
			\displaystyle \lim_{|t| \to +\infty} \frac{\Phi(t)}{e^{\alpha t^2}} = 0 \text{ for all } \alpha > 0 & \text{if } N = 2s,\\
			\displaystyle \lim_{|t| \to +\infty} \frac{\Phi(t)}{|t|^{2^*}} = 0 & \text{if } N > 2s.
		\end{cases}
	\end{equation*}
	Let $(u_n)_n \subset X^\mu$ bounded.\\
	(i) If $\mu = 0$ and there exists $R>0$ such that
	\begin{equation*}
		\lim_n \sup_{x \in \R^N} \int_{B_R(x)} u_n^2 \, \dx = 0,
	\end{equation*}
	then $\lim_n |\Phi(u_n)|_1 = 0$.\\
	(ii) If $N > K \ge 2$, each $u_n$ is $\cG$-invariant, and for every $R > 0$ there holds
	\begin{equation}\label{eq:vanish}
		\lim_n \sup_{z \in \R^{N-K}} \int_{B_R(z')} u_n^2 \, \dx = 0,
	\end{equation}
	then $\lim_n |\Phi(u_n)|_1 = 0$.
\end{Lem}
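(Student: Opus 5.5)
Both parts reduce to the classical Lions lemma: once the sequence is known to vanish in $L^p$ for some $p\in(2,2^*)$, the growth conditions on $\Phi$ give $|\Phi(u_n)|_1\to0$. We therefore prove two things in order. First, a vanishing statement: under the hypotheses of (i) or (ii), $|u_n|_p\to 0$ for some (hence every) $p\in(2,2^*)$. Second, the passage from $L^p$-vanishing to $|\Phi(u_n)|_1\to0$.

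For the first step in case (i), cover $\R^N$ by balls $B_R(x_k)$, $x_k\in R\mathbb{Z}^N/\sqrt N$, with bounded overlap. On each ball we use the local Gagliardo--Nirenberg/Sobolev inequality $|u|_{L^p(B)}^p\lesssim |u|_{L^2(B)}^{p-2}\|u\|_{H^{\sigma}(B)}^2$, where $\sigma=\min\{s,1\}$ and $p$ is chosen close to $2$ so that $H^\sigma(B)\hookrightarrow L^{p'}$ with the right interpolation exponent. We then sum over $k$. The localisation $\sum_k\|u\|_{H^\sigma(B_k)}^2\lesssim\|u\|_{H^\sigma(\R^N)}^2\le\|u\|_{H^s}^2$ holds for the Gagliardo seminorm (restricting the double integral to $B_k\times B_k$) and trivially for $\sigma=1$. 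This gives $|u_n|_p^p\lesssim \bigl(\sup_x|u_n|_{L^2(B_R(x))}\bigr)^{p-2}\|u_n\|_{H^s}^2\to0$. Interpolating with the bound in $L^2$ and, if $N>2s$, in $L^{2^*}$ (or any $L^q$, $q<\infty$, if $N\le 2s$) extends this to all $p\in(2,2^*)$.

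For case (ii) we only have vanishing on balls centred on the $z$-axis, so $\cG$-invariance must supply vanishing on arbitrary balls. Fix $R$ and a point $x=(y,z)$ with $|y|=r$. If $r\le R$, then $B_R(x)\subset B_{2R}(z')$, which vanishes by \eqref{eq:vanish} applied with radius $2R$. If $r>R$, the orbit $\cG x$ contains on the order of $(r/R)^{K-1}$ points whose $R$-balls are pairwise disjoint; this is where $K\ge2$ is used. All these balls carry the same $L^2$ mass, which is at most $|u_n|_2^2$. Hence
\begin{equation*}
\int_{B_R(x)}u_n^2\,\dx\lesssim (R/r)^{K-1}\sup_n|u_n|_2^2 .
\end{equation*}
Splitting according to whether $r\le M$ or $r>M$ (for $r\in(R,M]$ use $B_R(x)\subset B_{M+R}(z')$), we get $\limsup_n\sup_x\int_{B_R(x)}u_n^2\lesssim (R/M)^{K-1}$ for every $M$, so this quantity is $0$. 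Case (i) then applies verbatim to $u_n$ as $H^s$ functions; the bound in $X^\mu$ controls the $H^s$ norm either directly or via \eqref{eq:emb}. Verifying this geometric orbit-counting uniformly is the only non-routine point, and the disjoint-balls argument above is the approach we take.

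For the second step, given $\varepsilon>0$, the hypotheses on $\Phi$ yield $|\Phi(t)|\le\varepsilon t^2+C_\varepsilon|t|^p+\varepsilon|t|^{2^*}$ when $N>2s$. When $N<2s$, the $H^s\hookrightarrow L^\infty$ bound makes large $|t|$ irrelevant. When $N=2s$, we instead use $\varepsilon\,|t|^{q}(e^{\alpha t^2}-1)$ with small $\alpha$ and a Trudinger--Moser-type bound, which is uniform on bounded sets of $H^s$ provided $\alpha$ is small; this is the subsection on $N=2s$ (Lemma \ref{lem:L1}). Integrating gives $\limsup_n|\Phi(u_n)|_1\lesssim\varepsilon$, and since $\varepsilon$ is arbitrary the conclusion follows.
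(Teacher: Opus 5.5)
Your proposal is correct and follows essentially the same route as the paper. The paper writes out only case (i) for $N<2s$, using local $L^2$--$L^\infty$ interpolation to get $u_n\to0$ in $L^4$. For the remaining cases it defers to the standard Lions-lemma localisation and, for (ii), to the orbit-counting reduction to (i) in the cited references, which is exactly what you carry out. One small correction: the uniform Trudinger--Moser bound you need when $N=2s$ comes from \eqref{eq:MT}, or from Lemma \ref{lem:CLZ} after rescaling by the $H^{N/2}$ bound. It does not come from Lemma \ref{lem:L1}, which only gives integrability of $e^{\alpha u^2}-1$ for each fixed $u$.
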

\begin{proof}
	We prove only (i) when $N<2s$. The proof of (i) when $N \ge 2s$ follows verbatim that of \cite[Lemma 2.2]{BMS}, while (ii) can be proved as \cite[Corollary 3.2]{Mederski2020} (see also \cite[Remark 3.3]{Mederski2020}) having (i) at one's disposal. Fix $\varepsilon>0$. From the boundedness of $(u_n)_n$ and the embedding $H^s(\R^N) \hookrightarrow L^\infty(\R^N)$, there exists $c = c(\varepsilon) > 0$ such that
	\begin{equation*}
		|\Phi(t)| \le \varepsilon t^2 + c |t|^{4} \quad \text{for all } t \in \left[ -\sup_n |u_n|_\infty , \sup_n |u_n|_\infty \right],
	\end{equation*}
	therefore it suffices to prove that $u_n \to 0$ in $L^{4}(\R^N)$. From the interpolation inequality for Lebesgue spaces,
	\begin{equation*}
		|u_n|_{L^p(B_R(x))} \le |u_n|_{L^2(B_R(x))}^{1/2} |u_n|_{L^\infty(B_R(x))}^{1/2} \lesssim |u_n|_{L^2(B_R(x))}^{1/2} |u_n|_{H^s(B_R(x))}^{1/2}.
	\end{equation*}
Covering $\R^N$ with balls of radius $R$ such that each point is contained in at most $\mathcal{K}_N$ balls, where  $\mathcal{K}_N > 0$ depends only on $N$, we obtain
	\begin{equation*}
		|u_n|_p \lesssim \sup_k \|u_k\|^{1/2} \sup_{x \in \R^N} |u_n|_{L^2(B_R(x))}^{1/2},
	\end{equation*}
	whence the statement. Here, $\|\cdot\|$ is the norm in $X^\mu$.
\end{proof}

The next lemma is inspired by \cite[Lemma 5.5]{Hirata_Tanaka} (see also \cite[Proposition 4.2 (i)]{Gallo_Schino} for a proof that does not rely on the Nehari identity).
\begin{Lem}[Poho\v{z}aev identity]\label{lem:Poh}
	If (\ref{a:std}) holds and $u$ is a local minimiser of $J|_{\cD(\rho)}$, then there exists $\lambda \ge 0$ such that $(\lambda,u)$ is a solution to \eqref{eq:} and the Poho\v{z}aev identity holds:
	\begin{equation*}
		(N-2s) \int_{\R^N} |\mathrm{D}^s u|^2 + \frac{\mu}{|y|^{2s}} u^2 \, \dx = N \int_{\R^N} 2F(u) - \lambda u^2 \, \dx.
	\end{equation*}
\end{Lem}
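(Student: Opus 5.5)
We obtain both assertions from the minimality of $u$ on $\cD(\rho)$, treating the constraint $|u|_2^2 \le \rho$ as an inequality constraint. The Poho\v{z}aev identity is not derived by testing the equation against $x\cdot\nabla u$, which would need regularity we do not have. Instead it comes from comparing $u$ with the mass-preserving dilations $t \star u$. The approach follows \cite[Lemma 5.5]{Hirata_Tanaka}.

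\emph{Euler--Lagrange equation.} We use that $\cD(\rho)$ is the sublevel set $\{G \le \rho\}$ of $G(v) = |v|_2^2$ in $X_\cG^\mu$, where $J$ is of class $\cC^1$ (for $N=2s$ by Lemma \ref{lem:L1}). We split into two cases.
\begin{itemize}
\item If $|u|_2^2 < \rho$, then $u$ is an interior local minimiser, so $J'(u)=0$ on $X_\cG^\mu$ and we set $\lambda = 0$.
\item If $|u|_2^2 = \rho$ and $u\neq 0$, then $G'(u)=2u\neq0$, and the Lagrange multiplier rule gives $J'(u) + \lambda u = 0$ for some $\lambda \in \R$.
\end{itemize}
In the second case the sign $\lambda\ge0$ comes from the inequality constraint. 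For $\varphi\in X_\cG^\mu$ with $\langle u,\varphi\rangle_{L^2}<0$, the perturbation $u+\varepsilon\varphi$ stays in $\cD(\rho)$ for small $\varepsilon>0$. Minimality then gives $J'(u)\varphi\ge0$, i.e.\ $-\lambda\langle u,\varphi\rangle_{L^2}\ge0$, so $\lambda\ge0$. By the principle of symmetric criticality \cite{Palais}, $(\lambda,u)$ solves \eqref{eq:} in the weak sense on all of $X^\mu$.

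\emph{Poho\v{z}aev identity.} Set $h(t)\coloneqq J(t\star u)$ for $t>0$. Since $|t\star u|_2=|u|_2$ and the map $t\star\cdot$ preserves $\cG$-invariance, we have $t\star u\in\cD(\rho)$. Moreover $t\star u\to u$ in $X^\mu$ as $t\to1$, so $h$ has a local minimum at $t=1$. Scaling gives
\[
h(t)=\frac{t^{2s}}{2}[u]_\mu^2 - t^{-N}\int_{\R^N}F\bigl(t^{N/2}u\bigr)\,\dx ,
\]
using that the Hardy term scales exactly like $|\D^s u|_2^2$. Differentiating at $t=1$, justified by dominated convergence under (\ref{a:std}), we obtain
\[
0=h'(1)= s[u]_\mu^2 + N\int_{\R^N}F(u)\,\dx - \frac N2\int_{\R^N}f(u)u\,\dx .
\]
Testing the equation with $u$ gives the Nehari identity $[u]_\mu^2+\lambda|u|_2^2=\int_{\R^N} f(u)u\,\dx$. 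Substituting it for $\int f(u)u$ yields $(s-\tfrac N2)[u]_\mu^2 + N\int_{\R^N} F(u)\,\dx - \tfrac N2\lambda|u|_2^2=0$. Multiplying by $-2$ gives exactly the stated identity.

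The main obstacle is justifying the differentiability of $t\mapsto \int F(t^{N/2}u)t^{-N}$ and the continuity of $t\star u$ in $X^\mu$. When $N=2s$ this requires the Trudinger--Moser-type bounds from Subsection \ref{ss:NtwoS}, used as in Lemma \ref{lem:L1}. The expression for $h'(1)$ involves $\int f(u)u$, so the Nehari identity cannot be avoided here. If one wishes to avoid it, one can instead follow \cite[Proposition 4.2]{Gallo_Schino} and compare with the two-parameter family $\tau\, t\star u$.
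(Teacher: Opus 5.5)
Your proposal is correct and follows essentially the same route as the paper: $t \star u$ stays in $\cD(\rho)$, so $\frac{\mathrm{d}}{\mathrm{d}t}\big|_{t=1} J(t\star u) = 0$ (justified by dominated convergence in the cases $N<2s$, $N=2s$ via Lemma \ref{lem:L1}, and $N>2s$), and combining this with the Nehari identity for the multiplier $\lambda \ge 0$ gives the Poho\v{z}aev identity. The only difference is that you derive the sign of $\lambda$ by hand from the inequality constraint, whereas the paper cites the Karush--Kuhn--Tucker theorem.
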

\begin{proof}    
    We first verify that this function is differentiable in a neighbourhood of $t=1$. By a direct calculation, 
    $$ 
    J(t\star u) = \frac{t^{2s}}{2}[u]_\mu^2 - t^{-N}\int_{\mathbb R^N}F\left(t^{N/2}u\right)\,dx. 
    $$ 
    Thus, it suffices to justify the differentiation of 
    $$
    \Psi(t) := t^{-N}\int_{\mathbb R^N}F\left(t^{N/2}u\right)\,dx. 
    $$
    Fix $\delta > 0$, set $I:=[1-\delta,1+\delta]$, and define
    $$
    H(t,x):=t^{-N}F\left(t^{N/2} u(x)\right), \qquad (t,x)\in I\times\mathbb R^N. 
    $$
    For almost every $x\in\mathbb R^N$, the function $t\mapsto H(t,x)$ is continuously differentiable and 
    $$
    \partial_tH(t,x) = -Nt^{-N-1}F\bigl(t^{N/2} u(x)\bigr) + \frac{N}{2} t^{\frac{N}{2}-N-1}f\bigl(t^{N/2} u(x)\bigr)u(x). 
    $$
    We show that $\partial_tH(t,\cdot)$ is dominated by an $L^1$-function independent of $t\in I$.

    Suppose first that $N<2s$. Since $H^s(\mathbb R^N)\subset L^\infty(\mathbb R^N)$, we have $u\in L^\infty(\mathbb R^N)$. Let $R:=(1+\delta)^{N/2}|u|_\infty$. By the continuity of \(f\) and the first part of (\ref{a:std}), there exists $C_R>0$ such that 
    $$ 
    |f(\tau)|\leq C_R|\tau| \qquad\text{and}\qquad |F(\tau)|\leq C_R|\tau|^2 
    $$
    for every $|\tau|\leq R$. Consequently, uniformly for $t\in I$, 
    $$
    \left|\partial_tH(t,x)\right| \lesssim |u(x)|^2, 
    $$ 
    and the right-hand side belongs to $L^1(\mathbb R^N)$.

    Suppose next that $N>2s$. In this case, (\ref{a:std}) implies
    $$
    |f(\tau)| \lesssim |\tau|+|\tau|^{2^*-1}, \qquad |F(\tau)| \lesssim |\tau|^2+|\tau|^{2^*}
    $$
    for every $\tau \in \R$. Hence, it follows that
    $$
    \left|\partial_tH(t,x)\right| \lesssim |u(x)|^2+|u(x)|^{2^*},
    $$
    and the right-hand side is integrable.

    It remains to consider the critical case $N=2s$. Choose $\alpha > \alpha_N$ and apply (\ref{a:std}) with $q=2$. We obtain
    $$
    |f(\tau)| \lesssim |\tau|+|\tau|\left(e^{\alpha\tau^2}-1\right) .
    $$
    After integration, this also gives
    $$
    |F(\tau)| \lesssim |\tau|^2+|\tau|^2\left(e^{\alpha\tau^2}-1\right).
    $$
    Choose $\beta > \alpha(1+\delta)^N$. Then, for every $t \in I$ we have
    $$
    \left|\partial_tH(t,x)\right| \lesssim |u(x)|^2 + |u(x)|^2 \left( e^{\alpha (1+\delta)^N u(x)^2} - 1 \right) \lesssim |u(x)|^2+\left( e^{\beta u(x)^2}-1 \right).
    $$
    From Lemma \ref{lem:L1} we see that the right-hand side belongs to $L^1(\R^N)$. 
    
    Therefore, in all three cases, differentiation under the integral sign is justified by the dominated convergence theorem. Moreover, from the assumptions, there exists $\varepsilon \in (0,\delta)$ such that $t=1$ is a minimiser of $(1-\varepsilon,1+\varepsilon) \ni t \mapsto J(t \star u) \in \R$. This implies
	\begin{equation}\label{eq:D=0}
		0 = \frac{\mathrm{d}}{\mathrm{d} t}\Big|_{t=1} J(t \star u) = s [u]_\mu^2 + \frac{N}{2} \int_{\R^N} 2 F(u) - f(u)u \, \dx \eqqcolon M(u).
	\end{equation}
	From the Karush--Kuhn--Tucker theorem (see, e.g., \cite[Proposition A.1]{Mederski_Schino_2022}), there exists $\lambda \ge 0$ such that $(\lambda,u)$ is a solution to \eqref{eq:} and the Nehari identity holds:
	\begin{equation}\label{eq:Neh}
		\int_{\R^N} |\mathrm{D}^s u|^2 + \frac{\mu}{|y|^{2s}} u^2 + \lambda u^2 - f(u)u \, \dx = 0.
	\end{equation}
	Combining \eqref{eq:D=0} and \eqref{eq:Neh}, we obtain the statement.
\end{proof}

\subsection{Additional properties in the case $N=2s$}\label{ss:NtwoS}
We recall from \cite[Theorem 1.7]{Lam_Lu} that
\begin{equation}\label{eq:MT}
	\sup \Set{ \int_{\R^N} e^{\alpha_N u^2} - 1 \, \dx | u \in H^{N/2}(\R^N) \text{ and } |(\tau I - \Delta)^{N/4} u|_2 \le 1 } < +\infty,
\end{equation}
where $\tau$ is any positive number, $I \colon H^{N/2}(\R^N) \to H^{N/2}(\R^N)$ is the identity operator, and the constant $\alpha_N$ is sharp. Moreover, we need the following two properties.

\begin{Lem}\label{lem:L1}
	For every $\alpha>0$ and every $u \in H^{N/2}(\R^N)$, $e^{\alpha u^2} - 1 \in L^1(\R^N)$.
\end{Lem}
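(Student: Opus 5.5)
The proof reduces, by density and a scaling/splitting argument, to the uniform Moser--Trudinger bound \eqref{eq:MT} of Lam--Lu. Fix $\alpha>0$ and $u \in H^{N/2}(\R^N)$, and write $N=2s$, $\tau=1$, and $\|v\| \coloneqq |(I-\Delta)^{N/4} v|_2$. This is an equivalent norm on $H^{N/2}(\R^N)$ and is dense-compatible with $\cC_c^\infty(\R^N)$.

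First, pick $\varphi \in \cC_c^\infty(\R^N)$ with $\|u-\varphi\|$ small, to be fixed below, and set $w \coloneqq u-\varphi$. Using $(a+b)^2 \le (1+\epsilon)a^2 + (1+\epsilon^{-1})b^2$ and $e^{A+B}-1 = (e^A-1)(e^B-1) + (e^A-1) + (e^B-1)$, we get
\begin{equation*}
e^{\alpha u^2}-1 \le \bigl(e^{2\alpha w^2}-1\bigr)\bigl(e^{C\varphi^2}-1\bigr) + \bigl(e^{2\alpha w^2}-1\bigr) + \bigl(e^{C\varphi^2}-1\bigr),
\end{equation*}
where $C = 2\alpha$ (taking $\epsilon=1$).

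Next, handle each term.
\begin{itemize}
\item Since $\varphi$ is bounded with compact support, $e^{C\varphi^2}-1$ is bounded and compactly supported, hence in $L^1$.
\item For the middle term, choose $\|w\| \le (2\alpha/\alpha_N)^{-1/2}$. Then $2\alpha w^2 = \alpha_N (w/\theta)^2$ with $\theta = (\alpha_N/(2\alpha))^{1/2}$ and $\|w/\theta\| \le 1$, so \eqref{eq:MT} gives $\int e^{2\alpha w^2}-1 \, \dx < +\infty$.
\item The product term is bounded by $\bigl(e^{C|\varphi|_\infty^2}-1\bigr)\bigl(e^{2\alpha w^2}-1\bigr)$, which is integrable by the previous point.
\end{itemize}
Measurability is clear, and summing the three bounds gives $e^{\alpha u^2}-1 \in L^1(\R^N)$.

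The main point needing care is the scaling step. One must check that \eqref{eq:MT} is stated with the normalization $\|v\| \le 1$ and the exact exponent $\alpha_N$, so that exponents $\beta \le \alpha_N$ are covered: since $e^{\beta t^2}-1 \le e^{\alpha_N t^2}-1$, this holds. One must also confirm that density of $\cC_c^\infty$ in this norm holds for $s=N/2$, including non-integer $s$, which is standard via the Fourier characterization. Everything else is elementary.
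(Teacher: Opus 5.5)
Your proof is correct, but it takes a different route from the paper's. The paper writes $u = L \ast v$, with $L$ the Bessel potential and $v \in L^2(\R^N)$. It splits $\R^N$ into $\Omega = \{|u| \ge 1\}$ and its complement. The complement is handled by $e^{\alpha t^2}-1 \le C t^2$ for $|t| \le 1$ together with $u \in L^2(\R^N)$. For the finite-measure set $\Omega$, the paper uses the representation $u = L \ast v$ and defers to the argument in Lam--Lu's proof of their Theorem 1.4.

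You instead use the uniform bound \eqref{eq:MT} as a black box and pass from it to the statement for arbitrary $\alpha > 0$ by density of $\cC_c^\infty(\R^N)$. The identity $e^{A+B}-1=(e^A-1)(e^B-1)+(e^A-1)+(e^B-1)$, with $A=2\alpha w^2$ and $B=2\alpha\varphi^2$, separates the small-norm remainder $w = u-\varphi$ from the bounded, compactly supported part $\varphi$. The remainder is controlled by \eqref{eq:MT} after the rescaling $2\alpha w^2=\alpha_N (w/\theta)^2$. All three resulting terms are nonnegative and integrable, so the sum is too.

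Your approach is self-contained. It needs only \eqref{eq:MT}, which the paper already quotes, and the density of test functions in $H^{N/2}(\R^N)$, which holds for non-integer $N/2$ via the Fourier characterisation. It does not even use the sharpness of $\alpha_N$: any positive exponent in a uniform bound of this type would suffice, because the smallness of $\|w\|$ absorbs the constant. The paper's approach avoids density and works directly with the integral representation of $u$. However, its decisive estimate on $\Omega$ is only referenced, not carried out, so your argument is arguably the more transparent proof of this lemma.
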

\begin{proof}
	For $\xi \in \R^N$, we set $\langle \xi \rangle \coloneqq \sqrt{1 + |\xi|^2}$. Since $\cF$ is an isometric isomorphism, we have
	\begin{equation*}
		H^{N/2}(\R^N) = \Set{ u \in L^2(\R^N) | \cF^{-1}(\langle \cdot \rangle^{N/2} \cF(u)) \in L^2(\R^N) }.
	\end{equation*}
	Denoting $v \coloneqq \cF^{-1}(\langle \cdot \rangle^{N/2} \cF(u)) \in L^2(\R^N)$, we obtain $u = \cF^{-1}(\langle \cdot \rangle^{-N/2}) \ast v$, where $\ast$ stands for the convolution. Consequently,
	\begin{equation*}
		H^{N/2}(\R^N) = \Set{ L \ast v | v \in L^2(\R^N) },
	\end{equation*}
	where
	\begin{equation*}
		L(x) \coloneqq \cF^{-1}(\langle \cdot \rangle^{-N/2}) (x) = \frac{1}{(4\pi)^{N/2} \Gamma(N/4)} \int_0^{+\infty} e^{-\pi |x|^2 / t - t / (4\pi)} t^{-N/4-1} \, \dt
	\end{equation*}
	is the Bessel potential.\\
	Now, take $u \in H^{N/2}(\R^N)$ and let $v \in L^2(\R^N)$ such that $u = L \ast v$. Denoting $\Omega \coloneqq \Set{ x \in \R^N | \left| u(x) \right| \ge 1 }$, we have
	\begin{equation*}
		\int_{\R^N} e^{\alpha u^2} - 1 \, \dx = \int_\Omega e^{\alpha u^2} - 1 \, \dx + \int_{\complement \Omega} e^{\alpha u^2} - 1 \, \dx.
	\end{equation*}
	Since there exists $C>0$ such that $e^{\alpha t^2}-1 \le C t^2$ for every $t \in [0,1]$, there holds
	\begin{equation*}
		\int_{\complement \Omega} e^{\alpha u^2} - 1 \, \dx \le C \int_{\R^N} u^2 \, \dx < +\infty.
	\end{equation*}
	Finally, using that $u = L \ast v$, we can follow the argument of \cite[Proof of Theorem 1.4]{Lam_Lu} and prove that $\displaystyle \int_\Omega e^{\alpha u^2} - 1 \, \dx < +\infty$.
\end{proof}

\begin{Lem}\label{lem:CLZ}
	For every $\alpha \in (0,\alpha_N)$,
	\begin{equation*}
		\sup \Set{ \frac{1}{\left|u\right|_2^2} \int_{\R^N} e^{\alpha u^2} - 1 \, \dx | u \in H^{N/2}(\R^N) \setminus \{0\} \text{ and } |\D^{N/2} u|_2 \le 1 } < +\infty.
	\end{equation*}
\end{Lem}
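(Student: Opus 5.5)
The plan is to deduce this subcritical Moser--Trudinger bound from the sharp inequality \eqref{eq:MT} by a scaling argument. \eqref{eq:MT} is normalised with the full inhomogeneous norm $|(\tau I - \Delta)^{N/4} u|_2$, which involves $|u|_2$. To get rid of the $L^2$ part we rescale the function and pay for it with a volume factor, which turns out to be exactly $|u|_2^2$ up to constants.

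Fix $\alpha\in(0,\alpha_N)$ and $u\in H^{N/2}(\R^N)\setminus\{0\}$ with $|\D^{N/2}u|_2\le1$. Note that $|\D^{N/2}u|_2^2=\int|\xi|^N|\cF(u)|^2\,\mathrm{d}\xi$, with the normalisation of $\cF$ as in the paper. Consider the dilation $v(x)\coloneqq u(tx)$ with $t>0$ to be chosen. Since $N=2s$, the homogeneous seminorm is scale invariant:
\begin{equation*}
|\D^{N/2}v|_2=|\D^{N/2}u|_2\le1, \qquad |v|_2^2=t^{-N}|u|_2^2.
\end{equation*}
Choose $t$ such that $t^{-N}|u|_2^2=\delta$, where $\delta>0$ is a small constant to be fixed depending only on $\alpha$ and $N$.

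Next compare $|(\tau I-\Delta)^{N/4}v|_2^2=\int(\tau+|\xi|^2)^{N/2}|\cF v|^2$ with $|\D^{N/2}v|_2^2$. By the elementary inequality
\begin{equation*}
(\tau+|\xi|^2)^{N/2}\le(1+\varepsilon)|\xi|^N+C_{\varepsilon,\tau}
\end{equation*}
(valid for any $\varepsilon>0$, with $\tau=1$ say), we get
\begin{equation*}
|(\tau I-\Delta)^{N/4}v|_2^2\le(1+\varepsilon)+C_\varepsilon\delta.
\end{equation*}
Set $w\coloneqq v/\sqrt{(1+\varepsilon)+C_\varepsilon\delta}$. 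Then \eqref{eq:MT} applies to $w$, and
\begin{equation*}
\int e^{\alpha' v^2}-1\,\dx\le C
\quad\text{with}\quad
\alpha'=\frac{\alpha_N}{(1+\varepsilon)+C_\varepsilon\delta}.
\end{equation*}
Choose $\varepsilon$ and then $\delta$ so that $\alpha'\ge\alpha$. This is possible precisely because $\alpha<\alpha_N$, and it is the only place where the strict inequality is used.

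Finally, undo the scaling. We have
\begin{equation*}
\int e^{\alpha u^2}-1\,\dx=t^N\int e^{\alpha v^2}-1\,\dx\le t^N C=\frac{C}{\delta}|u|_2^2,
\end{equation*}
which gives the uniform bound on the ratio.

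The main obstacle is the elementary symbol estimate $(\tau+|\xi|^2)^{N/2}\le(1+\varepsilon)|\xi|^N+C_\varepsilon$ for non-integer $N/2$. It follows from $(a+b)^p\le(1+\varepsilon)b^p+C_\varepsilon a^p$ for $p>0$, $a,b\ge0$, which holds by homogeneity: for $b\ge Ma$ the ratio is close to $1$, and otherwise everything is bounded by a multiple of $a^p$. One should also check that \eqref{eq:MT} is stated with the Fourier normalisation matching that of $\D^{N/2}$, since constants there would shift $\alpha_N$. If they differ, the argument is the same, because any $\alpha<\alpha_N$ leaves room to absorb the mismatch.
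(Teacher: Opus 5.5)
Your proof is correct, and its overall strategy matches the paper's. A dilation (harmless for $|\D^{N/2}u|_2$ since $s=N/2$) trades the factor $|u|_2^2$ for a normalisation of the mass. The inhomogeneous norm in \eqref{eq:MT} is then dominated by the homogeneous one plus a small error, and the slack $\alpha<\alpha_N$ absorbs the rest.

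The execution differs, though. The paper normalises $|u|_2=1$ and sends $\tau\to0$. It estimates the symbol $(\tau+|\xi|^2)^{m+1/2}$ through $(\tau+|\xi|^2)^{1/2}\le\tau^{1/2}+|\xi|$, a binomial expansion, and interpolation of the intermediate seminorms. That computation is specific to odd $N$; the case $N/2\in\mathbb{N}$ is delegated to \cite[Theorem 1.7]{Chen_Lu_Zhang}.

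You instead fix $\tau$, make the mass $\delta$ small, and use the single inequality $(\tau+|\xi|^2)^{N/2}\le(1+\varepsilon)|\xi|^N+C_\varepsilon$, which holds for every real exponent. Under scaling the two devices are equivalent, since replacing $u$ by $u(t\cdot)$ turns $\tau$ into $\tau/t^2$. Your version, however, treats all $N$ at once and needs nothing beyond \eqref{eq:MT}. Its only cost is the factor $1+\varepsilon$ on the top-order term, where the paper keeps coefficient $1$; this is irrelevant because $\alpha<\alpha_N$ is strict.

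One small correction to your last remark. A normalisation mismatch by a fixed factor in the unfavourable direction could not be absorbed for $\alpha$ close to $\alpha_N$, so that fallback argument is wrong. It is also unnecessary: $|\D^{N/2}u|_2^2=\int_{\R^N}|\xi|^N|\cF(u)|^2\,\mathrm{d}\xi$ and $|(\tau I-\Delta)^{N/4}u|_2^2=\int_{\R^N}(\tau+|\xi|^2)^{N/2}|\cF(u)|^2\,\mathrm{d}\xi$ are computed with the same $\cF$, exactly as in the paper.
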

\begin{proof}
	Let $\alpha \in (0,\alpha_N)$. Using an internal scaling argument, it suffices to prove
	\begin{equation*}
		\sup \Set{ \int_{\R^N} e^{\alpha u^2} - 1 \, \dx | u \in H^{N/2}(\R^N) \text{ and } \max\{ |u|_2 , |\D^{N/2} u|_2 \} \le 1 } < +\infty.
	\end{equation*}
	Additionally, in view of \cite[Theorem 1.7]{Chen_Lu_Zhang}, we can assume that $N/2 \not \in \mathbb{N}$ and write $N/2 = m + 1/2$, $m \in \mathbb{N} \cup \{0\}$. If $u \in H^{m+1/2}(\R^N)$ is as above and $\tau \in (0,1)$, then
	\begin{align*}
		|(\tau I & - \Delta)^{(m+1/2)/2} u|_2^2
		= \int_{\R^N} (\tau + |\xi|^2)^{m+1/2} |\cF(u)|^2 \, \mathrm{d}\xi \\
		& \le \int_{\R^N} (\tau^{1/2} + |\xi|) (\tau + |\xi|^2)^m |\cF(u)|^2 \, \mathrm{d}\xi \\
		& = \sum_{j=0}^m
		{m \choose j}
		\left( \tau^{j+1/2} \int_{\R^N} |\xi|^{2(m-j)} |\cF(u)|^2 \, \mathrm{d}\xi + \tau^j \int_{\R^N} |\xi|^{2(m-j+1/2)} |\cF(u)|^2 \, \mathrm{d}\xi \right) \\
		& = \sum_{i=0}^{2m+1}
		{ m \choose  \lfloor i/2 \rfloor }
		\tau^{i/2} |\D^{m - i/2 + 1/2} u|_2^2 \\
		& \le |\D^{m + 1/2} u|_2^2 + \sum_{i=1}^{2m+1}
		{ m \choose  \lfloor i/2 \rfloor }
		\tau^{i/2} |u|_2^{2(1-\frac{2m-i+1}{2m+1})} |\D^{m+1/2} u|_2^{2\frac{2m-i+1}{2m+1}} \\
		& \le 1 + \tau^{1/2} \sum_{i=1}^{2m+1}
		{ m \choose  \lfloor i/2 \rfloor }.
	\end{align*}
	Therefore, we can take $\tau$ so small that
	\begin{equation*}
		|(\tau I - \Delta)^{(m+1/2)/2} u|_2^2 \le \frac{\alpha_N}{\alpha}
	\end{equation*}
	and have, from \eqref{eq:MT},
	\begin{equation*}
		\int_{\R^N} e^{\alpha u^2} - 1 \, \dx
		\le \int_{\R^N} e^{\alpha_N u^2 / |(\tau I - \Delta)^{(m+1/2)/2} u|_2^2} - 1 \, \dx
		\le C,
	\end{equation*}
	where $C>0$ does not depend on $u$.
\end{proof}

\section{A Gagliardo--Nirenberg-type inequality}\label{s:GN}

In this section, we fix $p \in (2,2^*)$. For every $u \in X_\cG^\mu \setminus \{0\}$, we define the quotient
\begin{equation*}
\cR(u) \coloneqq \frac{[u]_\mu^{p \delta_p} |u|_2^{p(1-\delta_p)}}{|u|_p^p} > 0,
\end{equation*}
where $\delta_p = \delta_p(N,s) = \frac{N}{s}(\frac12 - \frac1p) \in (0,1)$. It is readily seen that $\cR$ does not depend on internal or external scaling, that is, for all $t,r > 0$ and $u \in X_\cG^\mu \setminus \{0\}$, there holds $\cR(t u(r\cdot)) = \cR(u)$. As a consequence,
\begin{equation*}
\iota \coloneqq \inf \Set{\cR(u) | u \in X_\cG^\mu \setminus \{0\}} = \inf \Set{\cR(u) | u \in X_\cG^\mu \text{ and } [u]_\mu = |u|_2 = 1 }.
\end{equation*}

\begin{Th}\label{t:GN}
The infimum $\iota$ defined above is attained at a function $w \in X_\cG^\mu$ such that
\begin{equation*}
(-\Delta)^s w + \frac{\mu}{|y|^{2s}} w + \frac{1-\delta_p}{\delta_p} w = |w|^{p-2} w
\end{equation*}
and
\begin{equation}\label{eq:char}
[w]_\mu^{p-2} = |w|_2^{p-2} = \frac{\iota}{\delta_p}.
\end{equation}
\end{Th}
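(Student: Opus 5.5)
We argue by the direct method on the normalised minimising set. Take $(u_n)_n \subset X_\cG^\mu$ with $[u_n]_\mu = |u_n|_2 = 1$ and $\cR(u_n) = |u_n|_p^{-p} \to \iota$. Before anything else we check that $\iota > 0$ and that $|u_n|_p$ stays bounded. This follows from the classical Gagliardo--Nirenberg inequality in $H^s(\R^N)$ together with the equivalence of $[\cdot]_\mu$ and $|\D^s\cdot|_2$. That equivalence comes from \eqref{eq:emb} (Lemma \ref{lem:Hardy}) when $\mu<0$, and is trivial when $\mu\ge0$. Consequently $(u_n)_n$ is bounded in $X^\mu$ and $\liminf_n |u_n|_p^p = 1/\iota > 0$.

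\textbf{Excluding vanishing.} We apply Lemma \ref{lem:Lions} with $\Phi(t)=|t|^p$, which is admissible since $2<p<2^*$. When $N=K$ and $\mu=0$ we use part (i). When $N>K$ we use part (ii), which relies on $\cG$-invariance. When $N=K$ and $\mu\neq0$, $\cG=\cO(N)$, so the functions are radial and the compact embedding of radial functions into $L^p$ applies directly. In the first two cases, since $|u_n|_p\not\to0$, there exist $R>0$ and points $z_n\in\R^{N-K}$ (respectively $x_n\in\R^N$ when $\mu=0$) whose $L^2$ mass on $B_R(z_n')$ stays bounded away from zero. Translations in $z$ preserve $\cG$-invariance, $[\cdot]_\mu$, and all norms, so we translate and pass to a weak limit $u_n\rightharpoonup w\neq0$ in $X^\mu$, with $u_n\to w$ in $L^2_{loc}$.

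\textbf{Main obstacle: showing $w$ is a minimiser.} We expect a Brezis--Lieb splitting to settle this. Write $v_n=u_n-w$. Then
\[
[u_n]_\mu^2=[w]_\mu^2+[v_n]_\mu^2+o(1),\qquad |u_n|_2^2=|w|_2^2+|v_n|_2^2+o(1),
\]
and the Brezis--Lieb lemma gives $|u_n|_p^p=|w|_p^p+|v_n|_p^p+o(1)$. Set $a=[w]_\mu^2$ and $b=|w|_2^2$, both in $(0,1]$. By the definition of $\iota$,
\[
|w|_p^p\le \iota^{-1}a^{p\delta_p/2}b^{p(1-\delta_p)/2},\qquad |v_n|_p^p\le \iota^{-1}(1-a+o(1))^{p\delta_p/2}(1-b+o(1))^{p(1-\delta_p)/2}.
\]
Adding these and letting $n\to\infty$ yields
\[
1\le a^{\theta}b^{\kappa}+(1-a)^\theta(1-b)^\kappa,\qquad \theta=\frac{p\delta_p}{2},\ \kappa=\frac{p(1-\delta_p)}{2},\ \theta+\kappa=\frac p2>1.
\]
By H\"older, $a^\theta b^\kappa\le \theta' a+\kappa' b$ with weights summing to one after rescaling. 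Combined with the strict superadditivity coming from $\theta+\kappa>1$, this forces $a=b=1$ whenever $w\neq0$. One cleaner route is to note that $a^\theta b^\kappa\le \max(a,b)^{p/2}$ and $t^{p/2}+(1-t)^{p/2}<1$ on $(0,1)$; this step needs care, and it is where the exponent $p>2$ is used. It follows that $[w]_\mu=|w|_2=1$. By weak lower semicontinuity, and because the inequalities above are equalities, $|w|_p^p=1/\iota$, so $w$ attains $\iota$.

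\textbf{Euler--Lagrange equation and \eqref{eq:char}.} Since $w$ minimises $\cR$ on $X_\cG^\mu\setminus\{0\}$, we have $\frac{d}{d\varepsilon}\log\cR(w+\varepsilon\varphi)|_{\varepsilon=0}=0$. Using the principle of symmetric criticality to pass from $\cG$-invariant test functions to all of $X^\mu$, this becomes
\[
\frac{p\delta_p}{[w]_\mu^2}\bigl((-\Delta)^s w+\mu|y|^{-2s}w\bigr)+\frac{p(1-\delta_p)}{|w|_2^2}w=\frac{p}{|w|_p^p}|w|^{p-2}w.
\]
We then rescale $w\mapsto tw(r\cdot)$, which leaves $\cR$ unchanged, and choose $t$ and $r$ so that $[w]_\mu^2=|w|_2^2$ and $\delta_p|w|_p^p=[w]_\mu^2$. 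Dividing by $p\delta_p/[w]_\mu^2$ then gives exactly the stated equation. From these normalisations, $\iota=[w]_\mu^{p\delta_p}|w|_2^{p(1-\delta_p)}/|w|_p^p=\delta_p[w]_\mu^{p-2}$, which is \eqref{eq:char}.
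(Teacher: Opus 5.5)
Your proof is correct in outline. For $N>K$ it takes a different and sounder route than the paper. After extracting a nonzero weak limit $u_0$, the paper tries to get $[u_0]_\mu=|u_0|_2=1$ from a scaling chain. But $|t_0u_0(r_0\cdot)|_p\ge|u_0|_p$ gives $1/|u_0|_p^p\ge 1/|t_0u_0(r_0\cdot)|_p^p=\cR(u_0)$, not $\le$. So that chain only restates $\cR(u_0)\le 1/|u_0|_p^p$, and it does not rule out part of the mass escaping along $z$. Your Brezis--Lieb splitting, ending in $1\le a^\theta b^\kappa+(1-a)^\theta(1-b)^\kappa$, is exactly the dichotomy exclusion that is needed there. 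Your Euler--Lagrange normalisation ($t,r$ chosen so that $[w]_\mu^2=|w|_2^2=\delta_p|w|_p^p$) is equivalent to the paper's choice $w=(\iota/\delta_p)^{1/(p-2)}u_0$ with $[u_0]_\mu=|u_0|_2=1$.

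Two points need fixing.

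\textbf{The closing algebra.} Your ``cleaner route'' fails: $\max\{a,b\}^{p/2}+\max\{1-a,1-b\}^{p/2}$ tends to $2$ as $a\to0$ and $b\to1$. The Hölder bound $a^\theta b^\kappa\le\theta'a+\kappa'b$ without a power only yields $\le 1$, so it gives no contradiction either. You must keep the exponent. Since $\theta/(\theta+\kappa)=\delta_p$, weighted AM--GM gives $a^\theta b^\kappa=(a^{\delta_p}b^{1-\delta_p})^{p/2}\le c^{p/2}$ and $(1-a)^\theta(1-b)^\kappa\le(1-c)^{p/2}$, where $c\coloneqq\delta_p a+(1-\delta_p)b\in(0,1]$; here $c>0$ because $a,b>0$. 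Since $c^{p/2}+(1-c)^{p/2}<1$ on $(0,1)$ when $p>2$, you get $c=1$, which forces $a=b=1$.

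\textbf{The case $N=K$.} Functions in $X_\cG^\mu$ are radial there for every $\mu$, including $\mu=0$. So do not invoke Lemma \ref{lem:Lions} (i) with translations $x_n\in\R^N$. Such translations destroy $\cG$-invariance, and then the bound $|w|_p^p\le\iota^{-1}a^\theta b^\kappa$ is no longer justified. Use the compact radial embedding for every $\mu$, as the paper's Case 1 does; that case is fine as written.
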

\begin{proof}
Let $(u_n)_n \subset X_\cG^\mu$ such that $[u_n]_\mu = |u_n|_2 = 1$ and $\lim_n \cR(u_n) = \iota$.\\
\textbf{Case 1:} $N=K$. From the compact embedding $H^s_\textup{rad}(\R^N) \hookrightarrow L^p(\R^N)$, there exists $u_0 \in X_\cG^\mu$ such that, up to a subsequence, $u_n \rightharpoonup u_0$ in $X^\mu$ and $u_n \to u_0$ in $L^p(\R^N)$. Consequently,
\begin{equation*}
\iota = \lim_n \frac{1}{|u_n|_p^p} = \frac{1}{|u_0|_p^p} \ge \frac{[u_0]_\mu^{p \delta_p} |u_0|_2^{p(1-\delta_p)}}{|u_0|_p^p} \ge \iota
\end{equation*}
(it is clear from the first two equalities above that $u_0 \ne 0$), which yields $[u_0]_\mu = |u_0|_2 = 1$ and $\cR(u_0) = \iota$.\\
\textbf{Case 2:} $N > K$. If the sequence $(u_n)_n$ satisfies \eqref{eq:vanish}, then $\lim_n |u_n|_p = 0$, and so
\[
\iota = \lim_n \frac{1}{|u_n|_p^p} = +\infty,
\]
which is impossible. Consequently, from Lemma \ref{lem:Lions} (ii), there exists $u_0 \in X_\cG^\mu \setminus \{0\}$ such that, up to a subsequence and translations in the variable $z \in \R^{N-K}$, $u_n \rightharpoonup u_0$ in $X^\mu$. Let $t_0,r_0 > 0$ such that $[t_0 u_0(r_0\cdot)]_\mu = |t_0 u_0(r_0\cdot)|_2 = 1$, that is,
\[
t_0 = \left( [u_0]_\mu^{-N/s} |u_0|_2^{N/s-2} \right)^{1/2} \text{ and } r_0 = \left( [u_0]_\mu^{-1} |u_0|_2 \right)^{1/s}.
\]
Then, $|t_0 u_0(r_0 \cdot)|_p = [u_0]_\mu^{-\delta_p} |u_0|_2^{\delta_p-1} |u_0|_p \ge |u_0|_p$, which implies
\begin{equation*}
\cR(t_0 u_0(r_0 \cdot)) = \cR(u_0) = \frac{[u_0]_\mu^{p \delta_p} |u_0|_2^{p(1-\delta_p)}}{|u_0|_p^p} \le \frac{1}{|u_0|_p^p} \le \frac{1}{|t_0 u_0(r_0 \cdot)|_p^p} = \cR(t_0 u_0(r_0 \cdot)).
\end{equation*}
This yields $[u_0]_\mu = |u_0|_2 = 1$, and so, from the consequent strong convergence in $X^\mu$, $\cR(u_0) = \iota$.\\
Since $\cR(u_0) = \min \cR$ and $[u_0]_\mu = |u_0|_2 = 1$, $u_0$ solves the equation
\begin{equation*}
(-\Delta)^s u_0 + \frac{\mu}{|y|^{2s}} u_0 + \frac{1-\delta_p}{\delta_p} u_0 = \frac{\iota}{\delta_p} |u_0|^{p-2} u_0,
\end{equation*}
whence the statement letting $w \coloneqq (\iota/\delta_p)^{1/(p-2)} u_0$.
\end{proof}

\begin{Rem}
The proof of Theorem \ref{t:GN} shows that every minimising sequence for $\iota$ is relatively compact in $X_\cG^\mu$ when $N=K$ or relatively compact in $X_\cG^\mu$ up to translations in the variable $z \in \R^{N-K}$ when $N>K$.
\end{Rem}

Finally, arguing in a similar way and using Lemma \ref{lem:Lions} (i), we can prove the following result.

\begin{Th}
Let $N \ge 1$ and define the quantities
\begin{equation*}
\cR_0(u) \coloneqq \frac{|\D^s u|_2^{p \delta_p} |u|_2^{p(1-\delta_p)}}{|u|_p^p}, \quad u \in H^s(\R^N) \setminus \{0\},
\end{equation*}
and
\begin{equation*}
\iota_0 \coloneqq \inf \Set{\cR_0(u) | u \in H^s(\R^N) \setminus \{0\}}.
\end{equation*}
Then, every minimising sequence for $\iota_0$ is relatively compact in $H^s(\R^N)$ up to translations, and $\iota_0$ is attained at a function $u \in H^s(\R^N)$ such that
\begin{equation*}
(-\Delta)^s u + \frac{1-\delta_p}{\delta_p} u = |u|^{p-2} u
\end{equation*}
and
\begin{equation*}
|\D^s u|_2^{p-2} = |u|_2^{p-2} = \frac{\iota_0}{\delta_p}.
\end{equation*}
\end{Th}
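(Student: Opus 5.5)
We follow the scheme of the proof of Theorem \ref{t:GN}, Case 2, with the translation group $\R^N$ in place of translations in $z$, and with Lemma \ref{lem:Lions} (i) in place of (ii). The quotient $\cR_0$ is invariant under $u \mapsto t u(r\cdot)$ and under translations. Hence we may take a minimising sequence $(u_n)_n \subset H^s(\R^N)$ normalised by $|\D^s u_n|_2 = |u_n|_2 = 1$. Then $\cR_0(u_n) = |u_n|_p^{-p} \to \iota_0$, and $(u_n)_n$ is bounded in $H^s(\R^N)$. For an arbitrary minimising sequence we first rescale by $t_n,r_n$; this is harmless, since relative compactness "up to translations" of the rescaled sequence is the relevant statement, and the normalisation is part of the minimiser characterisation anyway.

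\textbf{Step 1 (non-vanishing).} Apply Lemma \ref{lem:Lions} (i) with $\Phi(t)=|t|^p$. This is admissible because $2<p<2^*$: we have $\Phi(t)t^{-2}\to0$ at $0$, and $\Phi$ has the required subcritical (or subexponential, if $N=2s$) growth at infinity. If $\sup_x\int_{B_1(x)}u_n^2\to0$, then $|u_n|_p\to0$ and $\iota_0=+\infty$. This is absurd, because $\iota_0 \le \cR_0(\varphi)<\infty$ for any fixed $\varphi \ne 0$. So there are $x_n$ with $\liminf_n\int_{B_1(x_n)}u_n^2>0$. Replacing $u_n$ by $u_n(\cdot+x_n)$, we get $u_n\rightharpoonup u_0\ne0$ in $H^s(\R^N)$, using local compactness of $H^s \hookrightarrow L^2_{\mathrm{loc}}$.

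\textbf{Step 2 (no dichotomy).} Weak lower semicontinuity gives $|\D^s u_0|_2\le1$ and $|u_0|_2\le1$. The key tool is the Brezis--Lieb splitting $|u_n|_p^p=|u_0|_p^p+|u_n-u_0|_p^p+o(1)$, together with the quadratic splittings of $|\D^s\cdot|_2^2$ and $|\cdot|_2^2$. Set $a=|\D^s u_0|_2^2$ and $b=|u_0|_2^2$, both in $(0,1]$. The GN inequality with constant $\iota_0$ gives $|u_0|_p^p\le\iota_0^{-1}a^{p\delta_p/2}b^{p(1-\delta_p)/2}$. The remainder $v_n = u_n-u_0$ has $|\D^s v_n|_2^2 \to 1-a$ and $|v_n|_2^2 \to 1-b$, so likewise $\limsup_n|v_n|_p^p\le\iota_0^{-1}(1-a)^{p\delta_p/2}(1-b)^{p(1-\delta_p)/2}$. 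Summing and using $|u_n|_p^p\to\iota_0^{-1}$ yields
\[
1\le a^{\theta}b^{\kappa}+(1-a)^{\theta}(1-b)^{\kappa},\qquad \theta=\tfrac{p\delta_p}{2},\ \kappa=\tfrac{p(1-\delta_p)}{2},\ \theta+\kappa=\tfrac p2>1.
\]
By H\"older's inequality, $a^\theta b^\kappa+(1-a)^\theta(1-b)^\kappa\le (a^{\theta/\lambda}\dots)$; more simply, since $\theta+\kappa>1$, we have $x^\theta y^\kappa\le \max(x,y)^{\theta+\kappa}$, and this is strictly subadditive unless one term vanishes. Concretely, we would use the weighted AM--GM bound $a^\theta b^\kappa\le (\tfrac{\theta a+\kappa b}{\theta+\kappa})^{\theta+\kappa}$ and strict convexity of $t\mapsto t^{p/2}$. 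These give the right-hand side $\le c^{p/2}+(1-c)^{p/2}<1$ for $c\in(0,1)$, with $c = \tfrac{\theta a+\kappa b}{\theta+\kappa}$. Hence $c=1$, i.e. $a=b=1$. Then $u_n\to u_0$ in $H^s$, because the norms converge in a Hilbert space, and $\cR_0(u_0)=\iota_0$.

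A simpler alternative, mirroring the paper's own argument, avoids Brezis--Lieb. Rescale $u_0$ via $t_0,r_0$ to normalised form; then $|t_0u_0(r_0\cdot)|_p=a^{-\delta_p/2}b^{(\delta_p-1)/2}|u_0|_p\ge|u_0|_p$. This forces equality, i.e. $a=b=1$, provided we already know $\cR_0(u_0)\le|u_0|_p^{-p}\le\iota_0$. That last inequality, i.e. $|u_0|_p\ge\lim|u_n|_p$, is exactly the step requiring compactness. Here Brezis--Lieb, or the argument above, is needed, so this is the main obstacle. The paper's Case 2 implicitly relies on the same point, and I would supply the splitting argument above.

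\textbf{Step 3 (Euler--Lagrange).} Since $u_0$ minimises $\cR_0$ with $|\D^s u_0|_2=|u_0|_2=1$, we differentiate $\log\cR_0$ at $u_0$. This gives $\delta_p(-\Delta)^s u_0+(1-\delta_p)u_0=\iota_0|u_0|^{p-2}u_0$. Setting $u=(\iota_0/\delta_p)^{1/(p-2)}u_0$ yields the stated equation. Moreover, $|\D^s u|_2^{p-2}=|u|_2^{p-2}=\iota_0/\delta_p$, exactly as in \eqref{eq:char}.
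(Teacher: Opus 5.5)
Your proof is correct, and it departs from the paper at the decisive step. The paper handles this statement only by ``arguing in a similar way'' to Case 2 of Theorem \ref{t:GN}, that is, by the rescaling trick. With $\tilde u_0 = t_0 u_0(r_0\cdot)$ normalised, one has $|\tilde u_0|_p \ge |u_0|_p$. The chain $\cR_0(u_0) \le |u_0|_p^{-p} \le |\tilde u_0|_p^{-p} = \cR_0(u_0)$ is then meant to force $|\D^s u_0|_2 = |u_0|_2 = 1$.

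As you suspected, this does not close. The inequality $|\tilde u_0|_p \ge |u_0|_p$ gives $|\tilde u_0|_p^{-p} \le |u_0|_p^{-p}$, which is the reverse of the second inequality in the chain. Moreover, $|\tilde u_0|_p^{-p} = \cR_0(u_0)$ holds identically, so the chain would otherwise show that every $u_0 \neq 0$ with $|\D^s u_0|_2, |u_0|_2 \le 1$ is already normalised. The missing ingredient is $|u_0|_p = \lim_n |u_n|_p$. In Case 1 this came from the compact radial embedding, which is unavailable for a translation-invariant problem.

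Your argument supplies exactly this. You combine the Brezis--Lieb splitting, the binding inequality $1 \le a^\theta b^\kappa + (1-a)^\theta(1-b)^\kappa$, and the weighted AM--GM bounds $a^\theta b^\kappa \le c^{p/2}$ and $(1-a)^\theta(1-b)^\kappa \le (1-c)^{p/2}$, where $c = (\theta a + \kappa b)/(\theta + \kappa)$. Since $c^{p/2} + (1-c)^{p/2} < 1$ for $c \in (0,1)$, and $c = 0$ is excluded by $u_0 \neq 0$, you get $c = 1$. Together with $a, b \le 1$ this gives $a = b = 1$, hence strong convergence. So the paper's route is shorter but, as written, incomplete. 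Yours is the standard concentration-compactness exclusion of dichotomy, at the cost of one Brezis--Lieb step. It also repairs Case 2 of Theorem \ref{t:GN} essentially verbatim, because $[\cdot]_\mu^2$ is a nonnegative bounded quadratic form and splits under weak convergence just like $|\D^s \cdot|_2^2$.

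A few small points to tidy up.
\begin{itemize}
\item Drop the intermediate bound $x^\theta y^\kappa \le \max(x,y)^{\theta+\kappa}$, which does not suffice on its own: for $a = 0.9$, $b = 0.1$ and $p$ close to $2$ it yields a bound larger than $1$. The AM--GM bound is the one that works.
\item To write $|w|_p^p \le \iota_0^{-1} |\D^s w|_2^{p\delta_p} |w|_2^{p(1-\delta_p)}$ you need $\iota_0 > 0$. This follows from the boundedness of $(u_n)_n$ in $H^s(\R^N) \hookrightarrow L^p(\R^N)$.
\item In Step 1 you must pass to a subsequence before choosing $x_n$ with $\liminf_n \int_{B_1(x_n)} u_n^2 > 0$.
\end{itemize}
Finally, you read ``every minimising sequence'' as ``every minimising sequence normalised by $|\D^s u_n|_2 = |u_n|_2 = 1$''. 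This is the only reading under which the claim is true: if $u$ is a minimiser, then $(n u)_n$ is minimising but has no convergent subsequence, even up to translations.
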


Letting $C_{N,p} \coloneqq \iota^{-1}$, we obtain the inequality
\begin{equation}\label{eq:GNsym}
|u|_p^p \le C_{N,p} [u]_\mu^{p \delta_p} |u|_2^{p(1-\delta_p)} \quad \text{for all } u \in X_\cG^\mu,
\end{equation}
where $C_{N,p}$ is optimal. Likewise, letting $C_{N,p}^0 \coloneqq \iota_0^{-1}$, we obtain the inequality
\begin{equation}\label{eq:GN}
|u|_p^p \le C_{N,p}^0 |\D^s u|_2^{p \delta_p} |u|_2^{p(1-\delta_p)} \quad \text{for all } u \in H^s(\R^N),
\end{equation}
where $C_{N,p}^0$ is optimal.

\section{Solutions with a prescribed norm}\label{s:norm}

Throughout this section, unless explicitly stated otherwise, $N \ge K \ge 2$. We begin by proving, through a series of lemmas, Theorem \ref{t:neg}.

\begin{Lem}\label{l:coer}
If (\ref{a:std}) and \eqref{eq:etai} hold, then $J|_{\cD(\rho)}$ is bounded below and coercive.
\end{Lem}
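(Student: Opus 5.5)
The plan is to bound $\int F(u)$ from above by $\varepsilon\,|u|_2^2$ plus a multiple of $|u|_{2_\#}^{2_\#}$. Then \eqref{eq:GNsym} with $p=2_\#$ turns the second term into a multiple of $[u]_\mu^2$, since $2_\#\delta_{2_\#}=2$ and $2_\#(1-\delta_{2_\#})=4s/N$. By \eqref{eq:etai}, fix $\eta>\overline{\eta}_\infty$ (any $\eta>0$ if $\overline\eta_\infty=0$) with
\[
2C_{N,2_\#}\eta\rho^{2s/N}<1 .
\]
By the definition of $\overline{\eta}_\infty$ there is $T>0$ with $F(t)\le F_+(t)\le \eta|t|^{2_\#}$ for $|t|\ge T$.

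\textbf{Step 1: pointwise bound on $F$.} On $|t|\le T$, continuity of $F$ together with (\ref{a:std}) gives $F(t)\le C_T t^2$. In the cases $N>2s$ and $N=2s$ this is immediate from the growth bounds near the origin. In the case $N<2s$ we use $|f(t)|\lesssim|t|$ on $[-1,1]$ and the continuity of $f$ on the compact set $1\le |t|\le T$. Hence
\[
F(t)\le C_T t^2+\eta|t|^{2_\#}\quad\text{for all } t\in\R .
\]
Hypothesis (\ref{a:supq}) is not assumed here, so the quadratic term has a fixed constant; this is harmless because $|u|_2^2\le\rho$ on $\cD(\rho)$.

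\textbf{Step 2: lower bound.} For $u\in\cD(\rho)$, Step 1 and \eqref{eq:GNsym} give
\[
J(u)\ge \tfrac12[u]_\mu^2 - C_T\rho - \eta C_{N,2_\#}\rho^{2s/N}[u]_\mu^2
=\Bigl(\tfrac12-\eta C_{N,2_\#}\rho^{2s/N}\Bigr)[u]_\mu^2 - C_T\rho .
\]
The coefficient of $[u]_\mu^2$ is positive by the choice of $\eta$. Therefore $J$ is bounded below on $\cD(\rho)$ by $-C_T\rho$, and $J(u)\to+\infty$ as $[u]_\mu\to\infty$.

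\textbf{Step 3: coercivity.} We must check that $[u]_\mu$ together with the $L^2$ bound controls the $X^\mu$-norm. When $\mu\ge0$ this is clear, since $[u]_\mu^2$ dominates $|\D^s u|_2^2$ and the Hardy term. When $K>2s$ and $\mu<0$, Lemma \ref{lem:Hardy} gives $[u]_\mu^2\ge(1-|\mu|c_{K,s})|\D^s u|_2^2$, and the factor is positive by the admissible range of $\mu$. So on $\cD(\rho)$, $\|u\|\to\infty$ forces $[u]_\mu\to\infty$, which gives coercivity.

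The main subtlety is the case $N<2s$. There (\ref{a:std}) only controls $f$ near $0$, so the bound $F\le C_T t^2$ on $[-T,T]$ must come from compactness and continuity rather than from a global growth estimate. It also requires that $F$ is bounded above by the $2_\#$-power only through $F_+$, which is what \eqref{eq:etai} provides.
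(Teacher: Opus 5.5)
Your proof is correct and follows the paper's argument. The paper likewise derives $F(t)\le c_\varepsilon t^2+(\overline{\eta}_\infty+\varepsilon)|t|^{2_\#}$ from (\ref{a:std}) and \eqref{eq:etai}, applies \eqref{eq:GNsym} with $p=2_\#$ together with $|u|_2^2\le\rho$, and then takes $\varepsilon$ small; this is exactly your choice of $\eta$. Your Steps 1 and 3 only make explicit what the paper leaves implicit: the local quadratic bound in each regime of $N$ versus $2s$, and the fact that $[u]_\mu$ together with the $L^2$ bound controls the $X^\mu$-norm, including the case $\mu<0$ via Lemma \ref{lem:Hardy}.
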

\begin{proof}
From (\ref{a:std}) and \eqref{eq:etai}, for every $\varepsilon>0$ there exists $c_\varepsilon>0$ such that
\begin{equation*}
F(t) \le c_\varepsilon t^2 + (\overline{\eta}_\infty + \varepsilon) |t|^{2_\#}
\end{equation*}
for all $t \in \R$. Consequently, if $u \in \cD(\rho)$, then \eqref{eq:GNsym} with $p = 2_\#$ yields
\begin{align*}
J(u) & \ge \frac12 [u]_\mu^2 - c_\varepsilon |u|_2^2 - (\overline{\eta}_\infty + \varepsilon) |u|_{2_\#}^{2_\#}\\
& \ge \frac12 [u]_\mu^2 - c_\varepsilon |u|_2^2 - (\overline{\eta}_\infty + \varepsilon) C_{N,2_\#} |u|_2^{4s/N} [u]_\mu^2\\
& \ge \frac{1}{2} [u]_\mu^2 - c_\varepsilon \rho - (\overline{\eta}_\infty + \varepsilon) C_{N,2_\#} \rho^{2s/N} [u]_\mu^2,
\end{align*}
and it suffices to take $\varepsilon$ sufficiently small.
\end{proof}

\begin{Rem}\label{rem:bbd}
Under (\ref{a:std}) and (\ref{a:subci}), the strict inequality in \eqref{eq:etai} is somewhat sharp. Indeed, if $f(t) = |t|^{2_\#-2} t$ (whence $\overline{\eta}_\infty = 1/2_\#$), $\rho = \bigl( 2_\# / (2 C_{N,2_\#}) \bigr)^{N/(2s)}$, and $w \in \cS(\rho)$ is the function given in Theorem \ref{t:GN} (up to scaling), then $[t \star w]_\mu \to +\infty$ as $t \to +\infty$, but $J(t \star w) = 0$ for all $t>0$. However, if equality holds in \eqref{eq:etai} and $F(t) \le \overline{\eta}_\infty |t|^{2_\#}$ for $|t| \gg 1$, then the argument of the proof of Lemma \ref{l:coer} shows that $J|_{\cD(\rho)}$ is bounded below. To some extent, this is again sharp, as \cite[Theorem 1.1 ii) a)]{Soave_JDE} constitutes a counterexample.
\end{Rem}

\begin{Lem}\label{lem:sub+}
Let $\rho_1,\rho_2 > 0$. If (\ref{a:std}) holds, then
\begin{equation*}
m(\rho_1 + \rho_2) \le m(\rho_1) + m(\rho_2),
\end{equation*}
and the inequality is strict if 
at least one between $m(\rho_1)$ and $m(\rho_2)$ is attained at a nonzero function.
\end{Lem}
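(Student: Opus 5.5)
We prove the weak inequality with the standard scaling argument adapted to the ball-type constraint $\cD(\rho)$, and then refine it to the strict inequality when one of the infima is attained at a nonzero function.

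\textbf{Step 1: the dilation $t^{1/2}u$.} For $u\in\cD(\rho)$ and $t\ge1$, the function $t^{1/2}u$ lies in $\cD(t\rho)$, so
\begin{equation*}
J(t^{1/2}u)=\tfrac t2[u]_\mu^2-\int_{\R^N}F(t^{1/2}u)\,\dx .
\end{equation*}
Assume first, say, $m(\rho_1)<0$. Then $m(t\rho_1)\le t\,m(\rho_1)$ for $t\ge1$. Indeed, take $(u_n)\subset\cD(\rho_1)$ with $J(u_n)\to m(\rho_1)<0$. Then
\begin{equation*}
\frac{d}{dt}\Bigl(\frac{J(t^{1/2}u)}{t}\Bigr)=\frac{1}{t^2}\int_{\R^N}2F(t^{1/2}u)-f(t^{1/2}u)\,t^{1/2}u\,\dx .
\end{equation*}
This derivative has no sign without an Ambrosetti–Rabinowitz-type hypothesis, and (\ref{a:std}) alone does not give one. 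So this route fails and a different construction is needed.

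\textbf{Step 2: splitting along translations in $z$.} Because $\cD$ is defined by an inequality constraint and $m\le0$, it is better to split functions than to dilate them. Take $u_1\in\cD(\rho_1)$ and $u_2\in\cD(\rho_2)$. By density, which is justified by the continuity of $J$ under (\ref{a:std}), we may assume they have compact support (or compact support in $z$ when $N=K$; see below). If $N>K$, set
\begin{equation*}
w=u_1+u_2(\cdot-z'),\qquad |z|\text{ large}.
\end{equation*}
The supports are then disjoint, so $|w|_2^2\le\rho_1+\rho_2$, $w$ is $\cG$-invariant, and, because $\cG$ acts only on $y$, we get $J(w)=J(u_1)+J(u_2)$ when $s\in\mathbb N$ (local operators). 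For nonlocal $s$, $J(w)\to J(u_1)+J(u_2)$ as $|z|\to\infty$, since the cross term $\int \D^s u_1\cdot \D^s u_2(\cdot-z')$ vanishes in the limit; this is best checked by weak convergence of $u_2(\cdot-z')$ to $0$. Taking infima gives $m(\rho_1+\rho_2)\le m(\rho_1)+m(\rho_2)$.

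When $N=K$ no translations are available, because radial symmetry forbids them. In that case I would use $\rho\mapsto m(\rho)$ being nonincreasing together with the following observation: if $m(\rho_2)=0$, the claim is just monotonicity. If both $m(\rho_i)<0$, I would use annular shifts instead. Radial functions supported in $\{R\le|x|\le R+1\}$ keep $[\,\cdot\,]_\mu$ and $\int F$ nearly additive after pushing one profile to a far annulus. Even so, the $L^2$ mass and $\int F$ of a far-translated radial profile change with the radius, so I expect this case to require a scaling $u(\cdot)\mapsto u(r\cdot)$ argument instead. This is the main obstacle.

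\textbf{Step 3: strictness.} Suppose $m(\rho_1)=J(u_1)$ with $u_1\ne0$. By Lemma \ref{lem:Poh}, $u_1$ solves \eqref{eq:} and satisfies the Pohožaev identity. Replacing $u_1$ by $u_1$ glued with $u_2$ gives a competitor $w$ in $\cD(\rho_1+\rho_2)$ with $J(w)\approx m(\rho_1)+m(\rho_2)$. I then perturb $w$ strictly downward. Since $u_1$ is a nontrivial critical point, when $|u_1|_2^2<\rho_1+\rho_2$ there is room to move in the direction $-J'(w)$. In that direction $J'(w)$ is nonzero: for nonlocal $s$ the tails of $u_1$ interact with $u_2$, and when $\lambda>0$ the constraint is not active. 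This yields a strict decrease, and hence $m(\rho_1+\rho_2)<m(\rho_1)+m(\rho_2)$.
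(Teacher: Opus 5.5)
\textbf{There is a genuine gap.} Your proposal proves the non-strict inequality only for $N>K$; for $N=K$ you leave the case $m(\rho_1),m(\rho_2)<0$ open yourself. Step 3 is not a proof of strictness in any case.

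\textbf{The missing idea.} You abandoned scaling in Step 1 because you used the amplitude dilation $t^{1/2}u$, whose derivative produces $2F-f(u)u$ and has no sign. The right move is the spatial dilation $u_\sigma \coloneqq u(\sigma^{-1/N}\cdot)$ with $\sigma\ge1$. It has the following properties:
\begin{itemize}
\item it is $\cG$-invariant (dilations commute with $\cG$);
\item $|u_\sigma|_2^2=\sigma|u|_2^2$, so $u\in\cD(\rho)$ gives $u_\sigma\in\cD(\sigma\rho)$;
\item $\int_{\R^N}F(u_\sigma)\,\dx=\sigma\int_{\R^N}F(u)\,\dx$;
\item both the $\D^s$-term and the Hardy term scale by $\sigma^{1-2s/N}$.
\end{itemize}
Hence
\[
J(u_\sigma)=\sigma\Bigl(\frac{1}{2\sigma^{2s/N}}[u]_\mu^2-\int_{\R^N}F(u)\,\dx\Bigr)\le\sigma J(u),
\]
with strict inequality if $\sigma>1$ and $u\neq0$. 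Applying this to $\varepsilon$-minimisers gives $m(\sigma\rho)\le\sigma m(\rho)$, and the inequality is strict when $m(\rho)$ is attained at a nonzero function.

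For $\rho_1\ge\rho_2$ you then get
\[
m(\rho_1+\rho_2)\le\tfrac{\rho_1+\rho_2}{\rho_1}m(\rho_1)=m(\rho_1)+\tfrac{\rho_2}{\rho_1}m(\rho_1)\le m(\rho_1)+m(\rho_2).
\]
The last step is the same inequality with $\sigma=\rho_1/\rho_2$. Strictness follows from whichever of the two steps involves a nonzero minimiser; if $\rho_1=\rho_2$ the two infima coincide. This argument needs no Ambrosetti--Rabinowitz condition, no translations and no Poho\v{z}aev identity, and it treats $N=K$ and $N>K$ identically. It is the paper's proof.

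\textbf{Why Step 3 fails.}
\begin{itemize}
\item You only have $J(w)\approx m(\rho_1)+m(\rho_2)$. The nonlocal cross terms vanish only as $|z|\to\infty$, and $m(\rho_2)$ need not be attained. A downward perturbation whose size is not bounded below uniformly over these approximations gives nothing in the limit.
\item If $|u_1|_2^2=\rho_1$ and $|u_2|_2^2\approx\rho_2$, the mass budget of $\cD(\rho_1+\rho_2)$ is saturated, so you cannot move freely along $-J'(w)$.
\item Your claim that ``when $\lambda>0$ the constraint is not active'' is backwards. By the KKT conditions, $\lambda>0$ forces $|u_1|_2^2=\rho_1$, and then $-J'(u_1)=\lambda u_1$ points out of the ball.
\end{itemize}
A gluing proof of strictness would require transferring mass between the two pieces and comparing them quantitatively, which you do not do. Step 3 also relies on Step 2's gluing, so it is unavailable for $N=K$ anyway.

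Your translation argument in Step 2 is acceptable for the non-strict inequality when $N>K$. The approximation step needs care, e.g.\ radial cutoffs $\chi(|x|/R)u$ to stay $\cG$-invariant and in $X^\mu$. Once the dilation is available, Step 2 is unnecessary.
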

\begin{proof}
We can assume that $m(\rho_1 + \rho_2) > -\infty$, whence $m(\rho_1) > -\infty$ and $m(\rho_2) > -\infty$ as well. Fix $\varepsilon>0$ and, for $j = 1,2$, let $u_j \in \cD(\rho_j)$ such that $J(u_j) \le m(\rho_j) + \varepsilon$. Next, let $\sigma \ge 1$ and observe that $u_j(\sigma^{-1/N} \cdot) \in \cD(\sigma \rho_j)$, whence
\begin{align*}
m(\sigma \rho_j) & \le J\bigl( u_j(\sigma^{-1/N} \cdot) \bigr) = \sigma \left( \frac{1}{2 \sigma^{2s/N}}  [u_j]_\mu^2 - \int_{\R^N} F(u_j) \, \dx \right)\\
&\le \sigma J(u_j) \le \sigma \bigl( m(\rho_j) + \varepsilon \bigr).
\end{align*}
In addition, $m(\sigma \rho_j) < \sigma m(\rho_j)$ if $\sigma > 1$ and $u_j \ne 0$.\\
If $\rho_1 \ge \rho_2$, then
\begin{align*}
m(\rho_1 + \rho_2) & \le \frac{\rho_1 + \rho_2}{\rho_1} \bigl( m(\rho_1) + \varepsilon \bigr) = m(\rho_1) + \frac{\rho_2}{\rho_1} m(\rho_1) + \frac{\rho_1 + \rho_2}{\rho_1} \varepsilon\\
& \le m(\rho_1) + m(\rho_2) + \frac{2 \rho_1 + \rho_2}{\rho_1} \varepsilon,
\end{align*}
and the first inequality is strict if $u_1 \ne 0$.\\
If $\rho_2 \ge \rho_1$, then
\begin{align*}
m(\rho_1 + \rho_2) & \le \frac{\rho_1 + \rho_2}{\rho_2} \bigl( m(\rho_2) + \varepsilon \bigr) = m(\rho_2) + \frac{\rho_1}{\rho_2} m(\rho_2) + \frac{\rho_1 + \rho_2}{\rho_2} \varepsilon\\
& \le m(\rho_1) + m(\rho_2) + \frac{\rho_1 + 2 \rho_2}{\rho_2} \varepsilon,
\end{align*}
and the first inequality is strict if $u_2 \ne 0$.\\
Since $\varepsilon$ is arbitrary, the first statement follows. The second can be proved exactly as in \cite[Proof of Lemma 2.5 (ii)]{Schino}.
\end{proof}

We recall from \eqref{eq:rhoF} the definition of $\rho_F$.

\begin{Lem}\label{lem:neg}
Assume that (\ref{a:std}) holds. Then, $m(\rho) < 0$ if one of the following scenarios occurs:
\begin{itemize}
    \item [(i)] $F$ is positive somewhere and $\rho > \rho_F$;
    \item [(ii)] \eqref{eq:etaz} holds.
\end{itemize}
\end{Lem}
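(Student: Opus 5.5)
For both scenarios I will exhibit one test function $v \in \cD(\rho)$ with $J(v) < 0$. It will come from the $L^2$-preserving dilation $t \star u = t^{N/2} u(t\cdot)$, which keeps $\cG$-invariance and the $L^2$ norm. It also scales as $[t\star u]_\mu^2 = t^{2s}[u]_\mu^2$, since the Hardy term $|y|^{-2s}$ has the same homogeneity as $|\D^s u|^2$. The second tool is the plain internal dilation $u(r\cdot)$, which rescales $\int F(u)$ and $|u|_2^2$ by $r^{-N}$ and $[u]_\mu^2$ by $r^{2s-N}$.

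\textbf{Scenario (i).} Since $\rho > \rho_F$, by \eqref{eq:rhoF} there are $\sigma > 0$ and $u \in X_\cG^\mu$ with $\int F(u)\,\dx \ge \sigma$ and
\begin{equation*}
(2\sigma)^{-N/(2s)} [u]_\mu^{N/s} |u|_2^2 < \rho .
\end{equation*}
Set $v \coloneqq u(r\cdot)$ with $r>0$ to be chosen. Then
\begin{equation*}
J(v) \le r^{-N}\Bigl(\tfrac12 r^{2s}[u]_\mu^2 - \sigma\Bigr) \quad\text{and}\quad |v|_2^2 = r^{-N}|u|_2^2 .
\end{equation*}
Hence $J(v) < 0$ as soon as $r^{2s} < 2\sigma/[u]_\mu^2$, while $v \in \cD(\rho)$ requires $r^{N} \ge |u|_2^2/\rho$. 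Both conditions can be met simultaneously exactly when
\begin{equation*}
\bigl(|u|_2^2/\rho\bigr)^{2s/N} < 2\sigma/[u]_\mu^2 ,
\end{equation*}
which is a rearrangement of the displayed strict inequality. This gives $m(\rho) \le J(v) < 0$. The positivity of $\sigma$ and the nonemptiness of the admissible set were noted after Theorem \ref{t:neg}; the infimum over $\sigma$ causes no trouble, because the inequality is strict.

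\textbf{Scenario (ii).} Take the optimiser $w$ from Theorem \ref{t:GN} with $p = 2_\#$, rescaled externally so that $|w|_2^2 = \rho$. Because $\cR$ is invariant under both scalings, we still have
\begin{equation*}
|w|_{2_\#}^{2_\#} = C_{N,2_\#}\rho^{2s/N}[w]_\mu^2 .
\end{equation*}
By \eqref{eq:etaz}, choose $\eta < \underline\eta_0$ with $2C_{N,2_\#}\eta\rho^{2s/N} > 1$. By (\ref{a:subcz}) there is $\tau > 0$ such that $F(t) \ge F_+(t) \cdot$ (more precisely $F(t) \ge \eta|t|^{2_\#}$) for $|t| \le \tau$. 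The main obstacle is that $w$ need not be bounded, while the lower bound on $F$ holds only near the origin. I will handle this by truncation: pick $\varphi \in \cC_c^\infty(\R^N)$ that is $\cG$-invariant and close to $w$ in $X^\mu \cap L^{2_\#}$, normalised to mass $\rho$. Since the inequality above is strict, $\varphi$ still satisfies
\begin{equation*}
\eta|\varphi|_{2_\#}^{2_\#} > \tfrac12 (1+\epsilon)[\varphi]_\mu^2
\end{equation*}
for some $\epsilon > 0$, and $\varphi$ is bounded. Then for small $t$, $|t\star\varphi|_\infty = t^{N/2}|\varphi|_\infty \le \tau$, so
\begin{equation*}
J(t\star\varphi) \le t^{2s}\Bigl(\tfrac12[\varphi]_\mu^2 - \eta|\varphi|_{2_\#}^{2_\#}\Bigr) < 0 ,
\end{equation*}
using $\int|t\star\varphi|^{2_\#} = t^{2s}|\varphi|_{2_\#}^{2_\#}$. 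Therefore $m(\rho) < 0$.

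For this approximation I need density of $\cG$-invariant $\cC_c^\infty$ functions in $X_\cG^\mu$. This should follow by mollifying with radial kernels in $y$ and cutting off, since the Hardy term is finite for $w$.
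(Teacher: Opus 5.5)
Your scenario (i) is exactly the paper's argument. The paper simply fixes $r^N=|u|_2^2/\rho$, so that the test function lies in $\cS(\rho)$.

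\textbf{Scenario (ii) has a gap.} The whole difficulty, the possible unboundedness of $w$, is moved into the claim that $\cG$-invariant $\cC_c^\infty$ functions are dense in $X_\cG^\mu$, and you do not prove this. The claim is harmless in two cases. If $N<2s$, then $w\in L^\infty$ by embedding and no approximation is needed. If $K>2s$, then by \eqref{eq:emb} $X^\mu=H^s(\R^N)$ with an equivalent norm, so standard density plus averaging over $\cG$ suffices.

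It is not harmless when $\mu>0$ and $K\le 2s\le N$, a regime the lemma covers (e.g.\ $K=2$, $s=\mu=1$). There $|y|^{-2s}$ is not locally integrable in $\R^K$, so a smooth $\varphi$ lies in $X^\mu$ only if it vanishes to suitable order on $\{y=0\}$. Mollifying $w$, even with kernels radial in $y$, generically gives $\varphi(0,z)\ne0$ and an infinite Hardy term. So your sketch ``mollify and cut off'' must first remove a neighbourhood of $\{y=0\}$, i.e.\ prove $[\chi(|y|/\varepsilon)w]_\mu\to0$ for a cutoff $\chi$ equal to $1$ near $0$.

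For $s=1$ this follows from $\int_{\{\varepsilon<|y|<2\varepsilon\}}w^2|y|^{-2}\,\dx\to0$. For $s>1$, and especially for noninteger $s$, the Leibniz or commutator terms include quantities like $\varepsilon^{-2j}\int_{\{\varepsilon<|y|<2\varepsilon\}}|\D^{s-j}w|^2\,\dx$ with $0<j<s$. Controlling these needs weighted Hardy--Rellich-type bounds on intermediate derivatives, which finiteness of $\int w^2|y|^{-2s}\,\dx$ alone does not give.

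The paper never needs a bounded test function. For any $u\in\cD(\rho)\setminus\{0\}$ it writes $J(t\star u)=t^{2s}\bigl(\tfrac12[u]_\mu^2-(t^{N/2})^{-2_\#}\int_{\R^N}F(t^{N/2}u)\,\dx\bigr)$. It applies Fatou's lemma to $F_+\ge0$, since pointwise $t^{N/2}u\to0$, to get $\liminf_{t\to0}(t^{N/2})^{-2_\#}\int_{\R^N}F_+(t^{N/2}u)\,\dx\ge\underline\eta_0|u|_{2_\#}^{2_\#}$. It then shows the $F_-$ contribution vanishes, using $F_-\equiv0$ near $0$ together with (\ref{a:std}): $F_-(t)\lesssim|t|^{2^*}$ if $N>2s$; a Cauchy--Schwarz bound combined with Lemma~\ref{lem:L1} if $N=2s$; and $H^s(\R^N)\hookrightarrow L^\infty(\R^N)$ if $N<2s$. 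Finally it tests with $u=w(\tau\cdot)$ itself, possibly unbounded.

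Replacing your approximation step with this Fatou argument repairs (ii). Your choice of $\eta<\underline\eta_0$ and your use of equality in \eqref{eq:GNsym} at mass $\rho$ are otherwise fine.
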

\begin{proof}
Let us begin with case (i).
Fix $\sigma$ such that
\begin{equation*}
	\rho > \left( \frac{1}{2 \sigma} \right)^{N / (2s)} \inf \Set{ [u]_\mu^{N/s} \left|u\right|_2^2 | u \in X_\cG^\mu \text{ and } \int_{\R^N} F(u) \, \dx \ge \sigma }
\end{equation*}
and $v \in X_\cG^\mu$ such that $\int_{\R^N} F(v) \, \dx \ge \sigma$ and
\begin{equation*}
	\rho > \left( \frac{1}{2 \sigma} \right)^{N / (2s)} [v]_\mu^{N/s} \left|v\right|_2^2
\end{equation*}
Next, define $v_\rho \coloneqq v(|v|_2^{2/N} \rho^{-1/N} \cdot) \in \cS(\rho)$. Then,
\begin{align*}
	J(v_\rho) & = \frac12 [v]_\mu^2 |v|_2^{2 (2s/N - 1)} \rho^{1 - 2s/N} - \frac{\rho}{|v|_2^2} \int_{\R^N} F(v) \, \dx\\
	& \le \frac12 [v]_\mu^2 |v|_2^{2 (2s/N - 1)} \rho^{1 - 2s/N} - \frac{\sigma}{|v|_2^2} \rho\\
	& = \frac{\rho^{1 - 2s/N}}{|v|_2^2} \left( \frac12 [v]_\mu^2 |v|_2^{4s/N} - \sigma \rho^{2s/N} \right) < 0.
\end{align*}

Let us move to case (ii).
Let $u \in \cD(\rho) \setminus \{0\}$ and observe that
\begin{equation*}
	J(t \star u) = \frac{t^{2s}}{2} [u]_\mu^2 - t^{-N} \int_{\R^N} F(t^{N/2} u) \, \dx = t^{2s} \left( \frac{[u]_\mu^2}{2} - (t^{N/2})^{-2_\#} \int_{\R^N} F(t^{N/2} u) \, \dx \right).
\end{equation*}
We will prove preliminarily that
\begin{equation}\label{eq:noF-}
	\lim_{t \to 0} t^{-2_\#} \int_{\R^N} F_-(t u) \, \dx = 0,
\end{equation}
using that $F_- \equiv 0$ in a neighbourhood of the origin.\\
\textbf{Case 1:} $N > 2s$. From (\ref{a:std}),
\begin{equation*}
	F_-(t) \lesssim |t|^{2^*} \quad \text{for all } t \in \R,
\end{equation*}
whence
\begin{equation*}
	\lim_{t \to 0} t^{-2_\#} \int_{\R^N} F_-(t u) \, \dx \lesssim \lim_{t \to 0} t^{2^*-2_\#} |u|_{2^*}^{2^*} = 0.
\end{equation*}
\textbf{Case 2:} $N = 2s$ (hence $2_\# = 4$). Taking $\alpha = \alpha_N + 1$, we get from (\ref{a:std}) that
\begin{equation*}
	F_-(t) \lesssim |t|^5 (e^{\alpha t^2} - 1).
\end{equation*}
Using the Cauchy--Schwarz inequality and the property that $(e^a - 1)^b \le e^{ab} - 1$ for every $a \ge 0$ and $b \ge 1$, we obtain
\begin{align*}
	\int_{\R^N} F_-(tu) \, \dx & \lesssim |t|^5 \int_{\R^N} |u|^5 (e^{\alpha t^2 u^2} - 1) \, \dx\\
	& \le |t|^5 |u|_{10}^5 \sqrt{\int_{\R^N} e^{2 \alpha t^2 u^2} - 1 \, \dx},
\end{align*}
whence the claim.\\
\textbf{Case 3:} $N < 2s$. Since $X^\mu \hookrightarrow H^s(\R^N) \hookrightarrow L^\infty(\R^N)$, (\ref{a:std}) yields that
\begin{equation*}
	F_-(t) \lesssim |t|^{2_\#+1} \quad \text{for all } t \in [-|u|_\infty,|u|_\infty],
\end{equation*}
whence
\begin{equation*}
	\lim_{t \to 0} t^{-2_\#} \int_{\R^N} F_-(t u) \, \dx \lesssim \lim_{t \to 0} t |u|_{2_\#+1}^{2_\#+1} = 0.
\end{equation*}
Using \eqref{eq:noF-} and Fatou's lemma, we have
\begin{equation}\label{eq:referee}
	\limsup_{t \to 0} \frac{J(t \star u)}{t^{2s}} \le \frac{[u]_\mu^2}{2} - \liminf_{t \to 0^+} (t^{N/2})^{-2_\#} \int_{\R^N} F(t^{N/2} u) \, \dx \le \frac{[u]_\mu^2}{2} - \underline{\eta}_0 |u|_{2_\#}^{2_\#}.
\end{equation}
If $\underline{\eta}_0 = +\infty$, then $m(\rho) \le J(t \star u) < 0$ for sufficiently small $t>0$. Otherwise, we have again $m(\rho) < 0$ provided
\begin{equation}\label{eq:neg}
	\frac{[u]_\mu^2}{2} - \underline{\eta}_0 |u|_{2_\#}^{2_\#} < 0
\end{equation}
for some suitable $u$. Let $w \in X_\cG^\mu$ be the function given in Theorem \ref{t:GN} with $p = 2_\#$ (note that $\delta_{2_\#} = 2/2_\#$) and take $v = w(\tau \cdot)$ for some $\tau > 0$ to be determined. We want $|v|_2^2 \le \rho$ and \eqref{eq:neg} to be satisfied by $v$, i.e.,
\begin{equation}\label{eq:w}
	|w|_2^2 \le \rho \tau^N \quad \text{and} \quad \tau^{2s} [w]_\mu^2 < 2 \underline{\eta}_0 |w|_{2_\#}^{2_\#}.
\end{equation}
From \eqref{eq:char} and the minimality of $w$, it is a matter of straightforward computations to check that \eqref{eq:w} holds if and only if $\displaystyle \left( \frac{2_\#}{2 C_{N,2_\#}} \right)^{1/(2s)} \frac{1}{\rho^{1/N}} \le \tau < (2_\# \underline{\eta}_0)^{1/(2s)}$, and this interval is nonempty due to \eqref{eq:etaz}.
\end{proof}

\begin{Rem}\label{rem:neg}
If $\max \{ \liminf_{t \to 0^+} F(t) t^{-2_\#} , \liminf_{t \to 0^-} F(t) (-t)^{-2_\#} \} = +\infty$, then a slight modification of the proof of Lemma \ref{lem:neg} (ii) -- i.e., taking $u \ge 0$ or $u \le 0$ -- shows that $m(\rho) < 0$ for all $\rho > 0$.
\end{Rem}

\begin{Rem}
Having a Gagliardo--Nirenberg-type inequality that involves the singular integral $\mu \int_{\R^N} u^2 |y|^{-2s} \, \dx$ is crucial to obtain Lemma \ref{lem:neg} when $\mu > 0$. Indeed, if we used \eqref{eq:GN} instead of \eqref{eq:GNsym} in \eqref{eq:referee}, we would get
\begin{equation*}
\limsup_{t \to 0} \frac{J(t \star u)}{t^{2s}} \le \frac{|\mathrm{D}^s u|_2^2}{2} + \mu \int_{\R^N} \frac{u^2}{|y|^{2s}} \, \dx - \underline{\eta}_0 |u|_{2_\#}^{2_\#}.
\end{equation*}
Although a suitable $u$ such that $|\mathrm{D}^s u|_2^2 / 2 - \underline{\eta}_0 |u|_{2_\#}^{2_\#} < 0$ exists, we would still have to deal with the positive term $\mu \int_{\R^N} u^2 |y|^{-2s} \, \dx$.
\end{Rem}

\begin{Lem}\label{l:Lcpt}
Assume (\ref{a:std})-(\ref{a:supq}) and \eqref{eq:etai}. If $(u_n)_n \subset \cD(\rho)$ and $\lim_n J(u_n) = m(\rho)$, then there exist $u \in \cD(\rho)$ and $\lambda \ge 0$ such that, up to subsequences and translations, $u_n \rightharpoonup u$ in $L^2(\R^N)$, $[u_n - u]_\mu \to 0$, $J(u) = \min_{\cD(\rho)} J$, and $(\lambda,u)$ is a solution to \eqref{eq:}. If, moreover, $m(\rho) < 0$, then $\lambda > 0$, $u \in \cS(\rho)$, and $u_n \to u$ in $L^2(\R^N)$.
\end{Lem}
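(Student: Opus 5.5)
By Lemma \ref{l:coer}, $J|_{\cD(\rho)}$ is bounded below and coercive. Hence $m(\rho)>-\infty$ and every minimising sequence $(u_n)_n\subset\cD(\rho)$ is bounded in $X^\mu$. The bound $|u_n|_2^2\le\rho$ gives the $L^2$ part, and coercivity gives a bound on $[u_n]_\mu$. By the Hardy-type inequality \eqref{eq:emb} (when $\mu<0$) or trivially (when $\mu\ge0$), this yields a bound in $H^s(\R^N)$.

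\textbf{Trivial case $m(\rho)=0$.} If $m(\rho)=0$ and $(u_n)_n$ vanishes in the sense of \eqref{eq:vanish}, I take $u=0$ and $\lambda=0$. Lemma \ref{lem:Lions} applied with $\Phi=F_+$ gives $\int F_+(u_n)\to0$; the growth conditions needed there follow from (\ref{a:std})-(\ref{a:supq}) and \eqref{eq:etai}, splitting $F_+$ near infinity via \eqref{eq:GNsym} if necessary. Then $\limsup_n \frac12[u_n]_\mu^2\le \lim_n J(u_n)+\int F_+(u_n)=0$, so $[u_n]_\mu\to0$.

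\textbf{Nonvanishing case.} Otherwise I translate in $z$ (in the case $N=K$ I use radial compactness instead) and obtain $u_n\rightharpoonup u\ne0$ in $X^\mu$ and a.e. Weak lower semicontinuity of the norm gives $u\in\cD(\rho)$. I then apply a Brezis--Lieb splitting with $v_n=u_n-u$:
\begin{equation*}
J(u_n)=J(u)+J(v_n)+o(1),\qquad |u_n|_2^2=|u|_2^2+|v_n|_2^2+o(1).
\end{equation*}
Setting $\rho_1=|u|_2^2$, for $n$ large we have $v_n\in\cD(\rho-\rho_1+o(1))$. Using continuity and monotonicity of $m$ (obtained from the scaling in Lemma \ref{lem:sub+}), this gives
\begin{equation*}
m(\rho)\ge J(u)+\liminf_n J(v_n)\ge m(\rho_1)+m(\rho-\rho_1)\ge m(\rho).
\end{equation*}
All inequalities are therefore equalities, so $J(u)=m(\rho_1)$, and since $u\in\cD(\rho)$ and $m$ is nonincreasing, $J(u)=m(\rho)$. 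If $\rho_1<\rho$, the strict subadditivity in Lemma \ref{lem:sub+} (with $m(\rho_1)$ attained at $u\ne0$) would give a contradiction whenever $m(\rho-\rho_1)$ is involved. The delicate point is that the limit relations give $\lim_n J(v_n)=m(\rho)-J(u)$ and need to show $\frac12[v_n]_\mu^2\to0$. For this I use $\int F(v_n)\to0$ after first showing $(v_n)$ vanishes; if it does not, I iterate the extraction, or more simply apply the same argument to $v_n$ and use strict subadditivity.

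\textbf{Conclusion and Lagrange multiplier.} With $J(u)=m(\rho)$ and the splitting, $\limsup_n \bigl(\frac12[v_n]_\mu^2-\int F(v_n)\bigr)\le0$. Combining this with $\int F(v_n)\to0$ (vanishing of $v_n$ together with Lemma \ref{lem:Lions}) gives $[u_n-u]_\mu\to0$. Since $u$ is a minimiser on $\cD(\rho)$, Lemma \ref{lem:Poh} provides $\lambda\ge0$ solving \eqref{eq:}.

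If $m(\rho)<0$, then $u\ne0$. If $|u|_2^2<\rho$, the scaling $u(\sigma^{-1/N}\cdot)$ with $\sigma=\rho/|u|_2^2>1$ from the proof of Lemma \ref{lem:sub+} gives $J<\sigma J(u)<J(u)$, contradicting minimality; so $u\in\cS(\rho)$. Weak convergence plus convergence of norms then gives $u_n\to u$ in $L^2(\R^N)$. Finally, $\lambda>0$: if $\lambda=0$, the KKT condition at an interior-type point combined with the Pohožaev identity and Nehari yields $s[u]_\mu^2=\frac N2\int f(u)u-2F(u)$. I expect ruling out $\lambda=0$ to need instead the strict decrease $\frac{d}{d\sigma}J(u(\sigma^{-1/N}\cdot))|_{\sigma=1}<0$, which forces the constraint to be active with a positive multiplier.

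The main obstacle is controlling the remainder $v_n$, namely proving it vanishes so that $\int F(v_n)\to0$; this requires the careful use of strict subadditivity.
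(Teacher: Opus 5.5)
\textbf{There is a genuine gap: as written, your control of the remainder $v_n$ and your proof that $\lambda>0$ are both incomplete. Both can be fixed with ingredients you already have.}

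\textbf{The remainder $v_n$.} Your chain $m(\rho)=J(u)+\lim_n J(v_n)\ge m(\rho_1)+m(\rho-\rho_1)\ge m(\rho)$ is sound, with two provisos. First, the bound $\liminf_n J(v_n)\ge m(\rho-\rho_1)$ must come from the continuity in Lemma \ref{l:cont}, which is proved independently of this lemma. Lemma \ref{lem:sub+} only gives the upper bound $m(\sigma\rho)\le\sigma m(\rho)$. Second, when $\rho_1=\rho$ you must read $m(0)=0$, i.e. $\liminf_n J(v_n)\ge 0$; this follows from $|v_n|_2\to0$ as explained below.

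The conclusion you then draw is a non sequitur. Knowing $u\in\cD(\rho)$ and that $m$ is nonincreasing only gives $J(u)=m(\rho_1)\ge m(\rho)$, not equality. What the chain actually yields is $\rho_1=\rho$. Indeed, $m(\rho_1)$ is attained at $u\ne0$, so if $\rho_1<\rho$, strict subadditivity gives $m(\rho)<m(\rho_1)+m(\rho-\rho_1)\le m(\rho)$. You mention this contradiction but never use its consequence, and that is why the remainder looks uncontrolled to you.

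With $\rho_1=\rho$ the argument closes quickly:
\begin{itemize}
\item $|v_n|_2\to0$, so interpolation with the $H^s$ bound gives $v_n\to0$ in $L^p(\R^N)$ for $p\in[2,2^*)$.
\item Hence $\int_{\R^N}F_+(v_n)\,\dx\to0$ by (\ref{a:std}) and (\ref{a:subci}).
\item Since $\lim_nJ(v_n)=m(\rho)-J(u)=0$, you get $\frac12[v_n]_\mu^2\le J(v_n)+\int_{\R^N}F_+(v_n)\,\dx\to0$.
\end{itemize}
As written, however, your control of $v_n$ rests on an unproved vanishing claim and an unspecified ``iterate the extraction''. That is the gap. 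Note also that $\int_{\R^N}F(v_n)\,\dx\to0$ is not available from Lemma \ref{lem:Lions} when $N\ge2s$: (\ref{a:std}) allows $F_-$ to have the critical growth that Lemma \ref{lem:Lions} excludes. Only $F_+$ should enter.

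\textbf{The sign of $\lambda$.} Positivity of $\lambda$ is not proved. A negative value of $\frac{\mathrm{d}}{\mathrm{d}\sigma}J\bigl(u(\sigma^{-1/N}\cdot)\bigr)$ at $\sigma=1$ only shows that the constraint is active, which you already have. An active constraint does not force a positive Karush--Kuhn--Tucker multiplier; relating that derivative to $\lambda$ is precisely the Poho\v{z}aev identity. So use Lemma \ref{lem:Poh} directly. If $\lambda=0$, it reads $(N-2s)[u]_\mu^2=2N\int_{\R^N}F(u)\,\dx$, i.e. $J(u)=\frac sN[u]_\mu^2\ge0$, contradicting $J(u)=m(\rho)<0$. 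This is how the paper concludes, and it then deduces $u\in\cS(\rho)$ from $\lambda>0$.

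\textbf{Comparison with the paper.} Once repaired, your route is a legitimate alternative. The paper never invokes continuity of $m$ in this proof. Instead it proves that $v_n$ vanishes by extracting a hypothetical second profile and applying strict subadditivity to three pieces, normalising the leftover piece. It then gets $J(u)=m(\rho)$ from strong $L^p$ convergence and lower semicontinuity. Your two-piece splitting trades that extraction for Lemma \ref{l:cont}.
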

\begin{proof}
We shall argue up to subsequences without explicitly mentioning it. From Lemma \ref{l:coer}, $(u_n)_n$ is bounded in $X_\cG^\mu \hookrightarrow H^s(\R^N)$. Preliminarily, we will prove that, up to translations in the variable $z \in \R^{N-K}$ if $N>K$, $u_n \rightharpoonup u$ in $X_\cG^\mu$, $u_n \to u$ in $L^p(\R^N)$ for all $p \in (2,2^*)$, and $u \ne 0$ if $m(\rho) < 0$.\\
If $N=K$, then, from the compact embedding $X_\cG^\mu \hookrightarrow L^p(\R^N)$, there exists $u \in X_\cG^\mu$ such that $u_n \rightharpoonup u$ in $X_\cG^\mu$ and $u_n \to u$ in $L^p(\R^N)$ as $n \to +\infty$. Additionally, if $u=0$, then $\lim_n |F_+(u_n)|_1 = 0$ from (\ref{a:std})--(\ref{a:subci}), whence $m(\rho) = \lim_n J(u_n) \ge 0$, which is impossible if $m(\rho) < 0$.\\
Assume now that $N>K$. If \eqref{eq:vanish} holds, then $\lim_n |F_+(u_n)|_1 = 0$ from Lemma \ref{lem:Lions} and (\ref{a:std})--(\ref{a:subci}), and we reach a contradiction as before if $m(\rho) < 0$. Therefore, in this case, \eqref{eq:vanish} cannot hold, and, consequently, there exist $(z_n)_n \subset \R^{N-K}$ and $u \in X_\cG^\mu \setminus \{0\}$ such that $u_n(\cdot + z'_n) \rightharpoonup u$ in $X_\cG^\mu$. If, instead, $m(\rho) = 0$, we simply infer that $u_n \rightharpoonup u$ in $X_\cG^\mu$ as $n \to +\infty$ for some $u \in X_\cG^\mu$. As for the strong convergence in $L^p(\R^N)$, we distinguish the cases where \eqref{eq:vanish} holds (which implies $m(\rho)=0$) and does not hold (which implies $u\ne0$).\\
If \eqref{eq:vanish} holds, then, since $\lim_n |F_+(u_n)|_1 = 0$,
\begin{equation*}
0 = \lim_n J(u_n) = \lim_n \frac12 [u_n]_\mu^2 + \int_{\R^N} F_-(u_n) \, \dx \ge \frac12 \lim_n [u_n]_\mu^2 \ge 0,
\end{equation*}
i.e., $\lim_n [u_n]_\mu = 0$. Since $(u_n)_n$ is also bounded in $L^2(\R^N)$, from \eqref{eq:GNsym} we infer that $u_n \to 0$ in $L^p(\R^N)$.\\
If \eqref{eq:vanish} does not hold, then let $v_n \coloneqq u_n(\cdot + z_n') - u$. If $(v_n)_n$ satisfies \eqref{eq:vanish}, then Lemma \ref{lem:Lions} (ii) yields that $v_n \to 0$ in $L^p(\R^N)$. Consequently, assume by contradiction that
\begin{equation*}
\limsup_n \sup_{z \in \R^{N-K}} \int_{B_R(z')} v_n^2 \, \dx > 0
\end{equation*}
for some $R>0$. Then, as before, there exist $(\zeta_n)_n \subset \R^{N-K}$ and $v \in X_\cG^\mu \setminus \{0\}$ such that $w_n \coloneqq v_n(\cdot + \zeta_n') - v \rightharpoonup 0$ in $X_\cG^\mu$ and $w_n \to 0$ a.e. in $\R^N$. From the Brezis--Lieb lemma \cite{Brezis_Lieb},
\begin{equation*}
\lim_n J(u_n) - J(w_n) = J(u) + J(v) \quad \text{and} \quad \lim_n |u_n|_2^2 - |w_n|_2^2 = |u|_2^2 + |v|_2^2.
\end{equation*}
Denoting $\rho_u \coloneqq |u|_2^2 > 0$ and $\rho_v \coloneqq |v|_2^2 > 0$, there holds
\begin{equation*}
\rho - \rho_u - \rho_v \ge \liminf_n |u_n|_2^2 - \rho_u - \rho_v = \liminf_n |w_n|_2^2 \eqqcolon \rho' \ge 0.
\end{equation*}
If $\rho'>0$, then let $\widetilde{w}_n \coloneqq \sqrt{\rho'} \, |w_n|_2^{-1} w_n \in \cS(\rho')$. Arguing as in \cite[Lemma 2.4]{Shibata}, we obtain $\lim_n J(w_n) - J(\widetilde{w}_n) = 0$. From Lemma \ref{lem:sub+} and the monotonicity of $m$, there holds
\begin{equation*}
m(\rho) = \lim_n J(u_n) = \lim_n J(\widetilde{w}_n) + J(u) + J(v) \ge m(\rho') + m(\rho_u) + m(\rho_v) \ge m(\rho).
\end{equation*}
In particular, $m(\rho_u)$ and $m(\rho_v)$ are attained (at $u$ and $v$). This and Lemma \ref{lem:sub+} imply the contradiction $m(\rho') + m(\rho_u) + m(\rho_v) > m(\rho)$. If $\rho'=0$, then $w_n \to 0$ in $L^2(\R^N)$. Thus, from (\ref{a:std})-(\ref{a:supq}) and \eqref{eq:etai}, $F_+(w_n) \to 0$ in $L^1(\R^N)$, which implies $\liminf_n J(w_n) \ge 0$. Therefore,
\begin{equation*}
m(\rho) = \lim_n J(u_n) = \lim_n J(w_n) + J(u) + J(v) \ge m(\rho_u) + m(\rho_v) \ge m(\rho),
\end{equation*}
and we reach a contradiction as before.
\\
Finally, we prove the rest of the statement. We can assume that $u_n \to u$ a.e. in $\R^N$. Then, from (\ref{a:std})--(\ref{a:subci}) and the strong convergence in $L^p(\R^N)$,
\begin{equation*}
m(\rho) = \lim_n J(u_n) \ge J(u) \ge m(\rho),
\end{equation*}
which yields $J(u) = m(\rho)$ and $[u_n]_\mu^2 \to [u]_\mu^2$. In particular, from Lemma \ref{lem:Poh}, there exists $\lambda \ge 0$ such that $(\lambda,u)$ is a solution to \eqref{eq:} and the Poho\v{z}aev identity holds. If $m(\rho) < 0$ and $\lambda = 0$ (which holds if $u \in \cD(\rho) \setminus \cS(\rho)$), then $0 > J(u) = \frac{s}{N} [u]_\mu^2$, which is impossible. Hence, $\lambda > 0$ and $u \in \cS(\rho)$, which implies $\rho \ge \limsup_n |u_n|_2^2 \ge \liminf_n |u_n|_2^2 \ge |u|_2^2 = \rho$.
\end{proof}

\begin{Rem}\label{r:lambda}
The final part of the proof of Lemma \ref{l:Lcpt} demonstrates that $\lambda > 0$ even if $m(\rho) = 0$, provided that $u \ne 0$. It also proves that if $m(\rho) < 0$, then every $\widetilde{u} \in \cD(\rho)$ such that $J(\widetilde{u}) = m(\rho)$ belongs to $\cS(\rho)$ and satisfies \eqref{eq:} for some $\widetilde{\lambda} > 0$.
\end{Rem}

\begin{proof}[Proof of Theorem \ref{t:neg}]
It follows from Lemmas \ref{l:coer}, \ref{lem:neg}, \ref{l:Lcpt} and Remark \ref{r:lambda}.
\end{proof}

We move on, now, to Theorems \ref{t:pos} and \ref{t:noex}.

\begin{Lem}\label{l:noatt}
If (\ref{a:std}) and \eqref{eq:zero_rho_star} hold and $0 < \rho < \rho_*$, then $m(\rho) = 0$ is attained only at $0$.
\end{Lem}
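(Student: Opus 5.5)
Argue by contradiction, using the mass-scaling from the proof of Lemma \ref{lem:sub+}. First, $m(\rho)=0$ for $\rho<\rho_*$ is already known: $m \le 0$ because $0\in\cD(\rho)$, and $m\ge 0$ below $\rho_*$ by the definition \eqref{eq:threshold} together with the monotonicity of $m$. So only the claim about attainment needs proof.

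Suppose $u\in\cD(\rho)\setminus\{0\}$ satisfies $J(u)=m(\rho)=0$. For $\sigma\ge1$, set $u_\sigma\coloneqq u(\sigma^{-1/N}\cdot)$. Then $|u_\sigma|_2^2=\sigma|u|_2^2\le\sigma\rho$, so $u_\sigma\in\cD(\sigma\rho)$, and $\cG$-invariance is preserved because the dilation commutes with $\cG$. The singular term $\int u^2|y|^{-2s}\,\dx$ scales exactly like $|\D^s u|_2^2$, so
\begin{equation*}
J(u_\sigma)=\sigma\left(\frac{1}{2\sigma^{2s/N}}[u]_\mu^2-\int_{\R^N}F(u)\,\dx\right)=\sigma J(u)-\frac{\sigma}{2}\bigl(1-\sigma^{-2s/N}\bigr)[u]_\mu^2 .
\end{equation*}
Since $u\ne0$, we have $[u]_\mu>0$. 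This holds for $\mu\ge0$, and also in the admissible negative range via \eqref{eq:emb}, where $[u]_\mu^2$ is comparable to $|\D^s u|_2^2$, which is positive for $u\in H^s\setminus\{0\}$. Hence $J(u_\sigma)<\sigma J(u)=0$ for every $\sigma>1$.

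Now choose $\sigma\in(1,\rho_*/\rho)$, which is possible because $\rho<\rho_*$, and in particular $\rho_*>0$ by \eqref{eq:zero_rho_star}. Then $m(\sigma\rho)\le J(u_\sigma)<0$ while $\sigma\rho<\rho_*$, contradicting $m(\sigma\rho)=0$. Therefore $0$ is the only minimiser.

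The only point that needs care is positivity of $[u]_\mu$ when $\mu<0$, which follows from the strict bound on $\mu$ in Lemma \ref{lem:Hardy}. Hypothesis \eqref{eq:zero_rho_star} serves only to guarantee $0<\rho_*<+\infty$, and (\ref{a:std}) only to make $J$ well defined.
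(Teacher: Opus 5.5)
Your proof is correct and uses the same mechanism as the paper, but it takes a more direct route.

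The paper argues via the strict subadditivity of Lemma \ref{lem:sub+}. It picks $\varepsilon>0$ with $\rho+\varepsilon<\rho_*$ and obtains the contradiction $0=m(\rho+\varepsilon)<m(\rho)+m(\varepsilon)=0$, with the strict case of that lemma deferred to an external reference. You instead apply the mass dilation $u(\sigma^{-1/N}\cdot)$ to the putative nonzero minimiser directly. That is exactly the inequality $m(\sigma\rho_j)<\sigma m(\rho_j)$ for $\sigma>1$ and $u_j\neq0$, which appears inside the proof of Lemma \ref{lem:sub+} and drives its strict case.

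Your version is therefore more self-contained: it needs no subadditivity and no outside citation. Your handling of $[u]_\mu>0$ in the negative range of $\mu$, via \eqref{eq:emb}, is exactly what makes the strict inequality $J(u_\sigma)<\sigma J(u)$ work.

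It also shows that \eqref{eq:zero_rho_star} is barely used. Your closing remark slightly overstates even that role. Positivity of $\rho_*$ already follows from $0<\rho<\rho_*$, and finiteness of $\rho_*$ is not needed either, since for $\rho_*=+\infty$ any $\sigma>1$ gives the contradiction.
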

\begin{proof}
Assume by contradiction that there exists $\rho \in (0,\rho_*)$ such that $m(\rho)$ is attained at a nonzero function. Let $\varepsilon > 0$ such that $\rho + \varepsilon < \rho_*$. From Lemma \ref{lem:sub+}, we reach the contradiction
\begin{equation*}
0 = m(\rho + \varepsilon) < m(\rho) + m(\varepsilon) = 0. \qedhere
\end{equation*}
\end{proof}

\begin{Lem}\label{l:gamma}
Assume (\ref{a:std}). There exists $\gamma>0$ such that, for all $u \in \cD(\rho)$ satisfying $[u]_\mu^2 \le 4 \gamma$, \eqref{eq:two_etas} implies
\begin{equation*}
J(u) \ge \frac12 \left( \frac12 - C_{N,2_\#} \overline{\eta}_0 \rho^{2s/N} \right) [u]_\mu^2.
\end{equation*}
\end{Lem}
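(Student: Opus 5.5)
The strategy is to split $F = F_+ - F_-$, discard $F_-$, and bound $\int F_+(u)$ near the origin using $\overline{\eta}_0$. The large values of $u$ are controlled by a higher power, made small by the smallness of $[u]_\mu$. Fix $\varepsilon>0$, to be chosen at the end in terms of $\rho$ (the statement is understood for fixed $\rho$ satisfying \eqref{eq:two_etas}, so $\gamma$ may depend on $\rho$). By the definition of $\overline{\eta}_0$ there is $\delta>0$ with $F_+(t) \le (\overline{\eta}_0+\varepsilon)|t|^{2_\#}$ for $|t|\le\delta$. For $|t|>\delta$, I use (\ref{a:std}) to get $F_+(t) \le C_\varepsilon |t|^{q}$ for some exponent $q>2_\#$. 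This requires $q<2^*$ (or $q\le 2^*$) when $N>2s$, any finite $q>2_\#$ when $N<2s$, and an exponential-type bound when $N=2s$.

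This gives, for all $t$, the bound $F_+(t)\le(\overline{\eta}_0+\varepsilon)|t|^{2_\#}+C_\varepsilon|t|^q$. For $u\in\cD(\rho)$, \eqref{eq:GNsym} with $p=2_\#$ (where $2_\#\delta_{2_\#}=2$) yields
\[
J(u)\ge \tfrac12[u]_\mu^2-(\overline{\eta}_0+\varepsilon)C_{N,2_\#}\rho^{2s/N}[u]_\mu^2-C_\varepsilon C_{N,q}\rho^{q(1-\delta_q)/2}[u]_\mu^{q\delta_q}.
\]
Since $q>2_\#$, we have $q\delta_q=\frac{N}{s}(\frac q2-1)>2$. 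Hence the last term is at most a small multiple of $[u]_\mu^2$ once $[u]_\mu^2\le4\gamma$ with $\gamma$ small. I then choose $\varepsilon$ so that $\varepsilon C_{N,2_\#}\rho^{2s/N}$ is at most a quarter of $\frac12-C_{N,2_\#}\overline{\eta}_0\rho^{2s/N}$, which is positive by \eqref{eq:two_etas}. Next I choose $\gamma$ so that $C_\varepsilon C_{N,q}\rho^{q(1-\delta_q)/2}(4\gamma)^{q\delta_q/2-1}$ is also at most a quarter of that quantity. Together these give the claimed factor $\frac12$.

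The main obstacle is the case $N=2s$, where no polynomial bound on $F_+$ is available. There I would use (\ref{a:std}) to write $F_+(t)\lesssim |t|^{q}(e^{\alpha t^2}-1)$ for $|t|>\delta$, with $q>2_\#=4$ and $\alpha$ slightly above $\alpha_N$. The Cauchy--Schwarz inequality gives $\int |u|^q(e^{\alpha u^2}-1)\le|u|_{2q}^q\bigl(\int e^{2\alpha u^2}-1\bigr)^{1/2}$. The first factor is handled by \eqref{eq:GNsym} as above. For the second, I apply Lemma \ref{lem:CLZ} to $u/[u]_\mu$, using that $[u]_\mu$ is comparable to $|\D^s u|_2$ in the admissible range of $\mu$. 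Provided $2\alpha[u]_\mu^2<\alpha_N$, which holds for $\gamma$ small, this gives $\int e^{2\alpha u^2}-1\lesssim |u|_2^2\le\rho$, with a constant uniform in $u$. The result is again a term of order $[u]_\mu^{q\delta_q}$ with exponent greater than $2$, and the conclusion follows as before.
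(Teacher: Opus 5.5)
Your plan is the one the paper follows: bound $F$ by $(\overline{\eta}_0+\varepsilon)|t|^{2_\#}$ near the origin plus a superquadratic remainder, use \eqref{eq:GNsym} with $p=2_\#$, get smallness of the remainder from $[u]_\mu^2\le 4\gamma$, and use Cauchy--Schwarz with Lemma \ref{lem:CLZ} when $N=2s$. However, your claim that $F_+(t)\le(\overline{\eta}_0+\varepsilon)|t|^{2_\#}+C_\varepsilon|t|^q$ for all $t\in\R$ is false in general when $N<2s$. In that regime (\ref{a:std}) only requires $|f(t)|\lesssim|t|$ on $[-1,1]$ and imposes nothing at infinity. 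For instance, $f(t)=|t|^{2_\#-2}t\,e^{t^2}$ satisfies (\ref{a:std}) with $\overline{\eta}_0=1/2_\#$, yet $F$ outgrows every power, so no finite $q$ works.

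The missing ingredient is the embedding $H^s(\R^N)\hookrightarrow L^\infty(\R^N)$. Since $K\le N<2s$ forces $\mu\ge0$, there is $c>0$ with $|u|_\infty\le c$ for all $u\in\cD(\rho)$ with $[u]_\mu^2\le4\gamma$. The polynomial bound is then needed only on $[-c,c]$, where it follows from (\ref{a:std}) and continuity. The order of the choices matters here: $c$ depends on the a priori bound on $[u]_\mu$, $C_\varepsilon$ depends on $c$, and $\gamma$ is tuned against $C_\varepsilon$. The paper first imposes $4\gamma\le1$, which fixes $c=c(\rho)$ and hence $C_\varepsilon$, and only afterwards shrinks $\gamma$.

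Two further corrections are needed.

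For $N>2s$, (\ref{a:std}) allows $F_+$ to grow like $|t|^{2^*}$ at infinity. A bound $F_+(t)\le C_\varepsilon|t|^q$ on $\{|t|>\delta\}$ therefore requires $q\ge2^*$, so $q<2^*$ is not admissible and you must take $q=2^*$. At that exponent \eqref{eq:GNsym}, proved only for $p\in(2,2^*)$, is not available. Use instead the Sobolev inequality $|u|_{2^*}\lesssim|\D^s u|_2\lesssim[u]_\mu$, with the second inequality coming from Lemma \ref{lem:Hardy} when $\mu<0$. This yields $C_\varepsilon|u|_{2^*}^{2^*}\lesssim C_\varepsilon\gamma^{(2^*-2)/2}[u]_\mu^2$, exactly as in the paper.

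For $N=2s$, applying Lemma \ref{lem:CLZ} to $u/[u]_\mu$ gives $\int_{\R^N}e^{2\alpha u^2}-1\,\dx\lesssim|u|_2^2/[u]_\mu^2$, not a bound $\lesssim|u|_2^2$ uniform in $u$. With your choice $q>4$ this is harmless: the remainder becomes $\lesssim\rho[u]_\mu^{q-2}$ with $q-2>2$. The paper, however, uses the remainder $C_\varepsilon t^4(e^{\alpha t^2}-1)$, whose contribution $C_\varepsilon\sqrt{\rho}\,|u|_8^4$ is only of order $\gamma^{1/2}[u]_\mu^2$. It therefore needs a bound $\lesssim\rho$ uniform in $u$. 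That bound comes from normalising by a fixed constant $2\sqrt{\gamma_0}$ with $8\alpha\gamma_0<\alpha_N$ and $\gamma\le\gamma_0$, rather than by $[u]_\mu$.
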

\begin{proof}
Assume \eqref{eq:two_etas} and fix $\varepsilon > 0$ such that $2 C_{N,2_\#} (\overline{\eta}_0 + \varepsilon) \rho^{2s/N} < 1$. We shall consider the cases $N>2s$, $N=2s$, and $N<2s$ separately and denote by $C_\varepsilon$ a positive constant, depending on $\varepsilon$, that may vary after an inequality.\\
\textbf{Case 1:} $N>2s$. There exists $C_\varepsilon > 0$ such that
\begin{equation*}
F(t) \le (\overline{\eta}_0 + \varepsilon) |t|^{2_\#} + C_\varepsilon |t|^{2^*} \quad \text{for all } t \in \R.
\end{equation*}
Then, from \eqref{eq:GNsym} with $p = 2_\#$, for every $u \in \cD(\rho)$ such that $[u]_\mu^2 \le 4 \gamma$ there holds
\begin{align*}
J(u) & \ge \frac12 [u]_\mu^2 - (\overline{\eta}_0 + \varepsilon) |u|_{2_\#}^{2_\#} - C_\varepsilon |u|_{2^*}^{2^*} \\
& \ge \left( \frac12 - C_{N,2_\#} (\overline{\eta}_0 + \varepsilon) \rho^{2s/N} - C_\varepsilon \gamma^{{(2^*-2)/2}} \right) [u]_\mu^2,
\end{align*}
and it suffices to take $\varepsilon$ and $\gamma$ sufficiently small.\\
\textbf{Case 2:} $N=2s$ (hence $2_\# = 4$). There exists $C_\varepsilon > 0$ such that
\begin{equation*}
F(t) \le (\overline{\eta}_0 + \varepsilon) t^{4} + C_\varepsilon t^4 (e^{\alpha t^2} - 1) \quad \text{for all } t \in \R,
\end{equation*}
where, for simplicity, $\alpha = \alpha_N + 1$. We can assume that $8 \alpha \gamma < \alpha_N$. Then, from \eqref{eq:GNsym} with $p=4,8$ and Lemma \ref{lem:CLZ}, for every $u \in \cD(\rho)$ such that $[u]_\mu^2 \le 4 \gamma$ there holds
\begin{align*}
J(u) & \ge \frac12 [u]_\mu^2 - (\overline{\eta}_0 + \varepsilon) |u|_{4}^{4} - C_\varepsilon \int_{\R^N} u^4 (e^{\alpha u^2} - 1) \, \dx \\
& \ge \left( \frac12 - C_{N,4} (\overline{\eta}_0 + \varepsilon) \rho^{2s/N} \right) [u]_\mu^2 - C_\varepsilon |u|_8^4 \sqrt{\int_{\R^N} e^{2 \alpha u^2} - 1 \, \dx} \\
& \ge \left( \frac12 - C_{N,4} (\overline{\eta}_0 + \varepsilon) \rho^{2s/N} \right) [u]_\mu^2 - C_\varepsilon \sqrt{\rho} |u|_8^4 \\
& \ge \left( \frac12 - C_{N,4} (\overline{\eta}_0 + \varepsilon) \rho^{2s/N} - C_\varepsilon \gamma^{1/2} \rho^{2(1-\delta_8)+1/2} \right) [u]_\mu^2,
\end{align*}
and it suffices to take $\varepsilon$ and $\gamma$ sufficiently small.\\
\textbf{Case 3:} $N<2s$. There exists $C_\varepsilon > 0$ such that
\begin{equation*}
F(t) \le (\overline{\eta}_0 + \varepsilon) |t|^{2_\#} + C_\varepsilon |t|^{p} \quad \text{for all } t \in [-c,c],
\end{equation*}
where $c>0$ is such that $|u|_\infty \le c$ for all $u \in \cD(\rho)$ satisfying $[u]_\mu^2 \le 4 \gamma$ and, for simplicity, $p = 2_\#+1$. We can assume that $4\gamma \le 1$. Then, from \eqref{eq:GNsym} with $p = 2_\#$, for every $u \in \cD(\rho)$ such that $[u]_\mu^2 \le 4 \gamma$ there holds
\begin{align*}
J(u) & \ge \frac12 [u]_\mu^2 - (\overline{\eta}_0 + \varepsilon) |u|_{2_\#}^{2_\#} - C_\varepsilon |u|_{p}^{p} \\
& \ge \left( \frac12 - C_{N,2_\#} (\overline{\eta}_0 + \varepsilon) \rho^{2s/N} - C_\varepsilon \gamma^{(p\delta_p-2)/2} \rho^{p(1-\delta_p)/2} \right) [u]_\mu^2,
\end{align*}
and it suffices to take $\varepsilon$ and $\gamma$ sufficiently small.
\end{proof}

\begin{Rem}\label{r:gamma}
Under (\ref{a:std}), for a given $\rho>0$, one can prove the existence of $\gamma_0 > 0$ such that a result analogous to Lemma \ref{l:gamma} holds in the case $\mu=0$ and $N\ge1$.
\end{Rem}

\begin{Lem}\label{l:cont}
If (\ref{a:std}) and \eqref{eq:etai} hold, then $\bigl( 0 , (2 C_{N,2_\#} \overline{\eta}_\infty)^{-N/(2s)} \bigr) \ni \rho \mapsto m(\rho) \in (-\infty,0]$ is continuous. If, moreover, $\overline{\eta}_\infty > 0$ and $m\bigl( (2 C_{N,2_\#} \overline{\eta}_\infty)^{-N/(2s)} \bigr)$ is finite\footnote{A sufficient condition for this to hold is discussed in Remark \ref{rem:bbd}.}, then $m$ is left-continuous at $(2 C_{N,2_\#} \overline{\eta}_\infty)^{-N/(2s)}$.\\
In particular, if $m(\rho_*)$ is finite, then $m(\rho_*) = 0$.
\end{Lem}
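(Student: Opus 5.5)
Set $\rho_\infty \coloneqq (2 C_{N,2_\#} \overline{\eta}_\infty)^{-N/(2s)}$. The plan rests on two facts: $m$ is nonincreasing, and the scaling inequality from the proof of Lemma \ref{lem:sub+} holds, namely $m(\sigma\rho) \le \sigma m(\rho)$ for every $\sigma \ge 1$, whenever $m(\rho)$ is finite. On $(0,\rho_\infty)$ Lemma \ref{l:coer} gives $m(\rho) > -\infty$. Continuity then splits into right- and left-continuity at a fixed $\rho_0 \in (0,\rho_\infty)$.

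\emph{Right-continuity.} For $\rho > \rho_0$, apply the scaling inequality with $\sigma = \rho/\rho_0$:
\[
m(\rho_0) \ge m(\rho) \ge \tfrac{\rho}{\rho_0} m(\rho_0)\quad\text{(reversed)},
\]
that is, $m(\rho) \le \frac{\rho}{\rho_0}m(\rho_0) \le m(\rho_0)$ because $m \le 0$. This gives an upper bound, not the lower bound we need.

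For the lower bound I use a rescaling argument. Take $u \in \cD(\rho)$ with $J(u) \le m(\rho) + \varepsilon$ and set $v \coloneqq \sqrt{\rho_0/\rho}\,u \in \cD(\rho_0)$. Then $m(\rho_0) \le J(v)$, so it suffices to show $|J(v) - J(u)| \to 0$ as $\rho \downarrow \rho_0$, uniformly over such almost-minimisers. Two ingredients make this work:
\begin{itemize}
\item Fix $\bar\rho \in (\rho_0, \rho_\infty)$. For $\rho \le \bar\rho$, Lemma \ref{l:coer} applied with $\bar\rho$ bounds $[u]_\mu$ uniformly, since $J(u) \le \varepsilon$. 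By the embeddings, $u$ is then bounded in $L^2 \cap L^{2^*}$ (or under the Trudinger--Moser control from Lemma \ref{lem:CLZ} when $N = 2s$, or in $L^\infty$ when $N<2s$).
\item The difference $\int F(u) - F(tu)$ with $t = \sqrt{\rho_0/\rho} \to 1$ is estimated via the mean value theorem and (\ref{a:std}). The quadratic part changes by a factor $t^2$.
\end{itemize}
Hence $J(v) - J(u) \to 0$, so $\liminf_{\rho\downarrow\rho_0} m(\rho) \ge m(\rho_0)$.

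\emph{Left-continuity.} For $\rho < \rho_0$, the inverse rescaling $\sqrt{\rho/\rho_0}\,u$ of an almost-minimiser $u$ for $m(\rho_0)$ lies in $\cD(\rho)$. The same estimate gives $\limsup_{\rho\uparrow\rho_0} m(\rho) \le m(\rho_0)$, and monotonicity supplies the other inequality.

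\emph{Endpoint $\rho_\infty$.} When $m(\rho_\infty)$ is finite, left-continuity at $\rho_\infty$ follows from the same left-rescaling argument. The only extra input needed is a uniform bound on $[u]_\mu$ for almost-minimisers at $\rho_\infty$, and this is the main obstacle, because coercivity (Lemma \ref{l:coer}) fails there. I would try to avoid it. Choose an almost-minimiser $u$ for $m(\rho_\infty)$ and note that $\sqrt{\rho/\rho_\infty}\,u \to u$ in $X^\mu$ for this fixed $u$ as $\rho \uparrow \rho_\infty$. By continuity of $J$ (which holds under (\ref{a:std})), $\limsup m(\rho) \le J(u) \le m(\rho_\infty) + \varepsilon$. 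So no uniformity is needed in this direction, and monotonicity gives the reverse inequality. In fact this pointwise argument gives left-continuity everywhere. The uniform estimate is therefore needed only for right-continuity inside $(0,\rho_\infty)$, where coercivity is available.

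\emph{Last claim.} Suppose $m(\rho_*)$ is finite. If $\rho_* < \rho_\infty$, then $\rho_*$ is interior, so by continuity and $m = 0$ on $(0,\rho_*)$ we get $m(\rho_*) = 0$. Otherwise $\rho_* = \rho_\infty$ (note $\rho_* \le \rho_\infty$, or else $m$ would vanish up to a point where it is not finite). Left-continuity at $\rho_\infty$ then gives $m(\rho_*) = \lim_{\rho\uparrow\rho_*} m(\rho) = 0$. The case $\rho_* = 0$ is vacuous, or handled by the convention.
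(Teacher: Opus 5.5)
Your left-continuity argument (rescale a fixed $u\in\cD(\rho_0)$ by $\sqrt{\rho/\rho_0}$, use continuity of $J$, then monotonicity) is exactly the paper's endpoint argument, and you are right that it works at every $\rho_0$. For $N\ne 2s$ your right-continuity estimate is also fine, via Sobolev or the uniform $L^\infty$ bound.

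\textbf{The gap: right-continuity when $N=2s$.} Because you rescale the \emph{amplitude}, $v=\sqrt{\rho_0/\rho}\,u$, you need $\int_{\R^N}F(u)-F(tu)\,\dx\to0$ uniformly over almost-minimisers, and the mean value theorem brings in $f$.
\begin{itemize}
\item For $N=2s$, (\ref{a:std}) only gives $|f(t)|\lesssim|t|+|t|^{q-1}(e^{\alpha t^2}-1)$ with $\alpha>\alpha_N$. Since \eqref{eq:etai} constrains only $F_+$, $f$ may really have critical exponential growth through $F_-$.
\item Lemma \ref{lem:CLZ} controls $\int_{\R^N}e^{\beta u^2}-1\,\dx$ uniformly only when $\beta|\D^{N/2}u|_2^2<\alpha_N$. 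With $\beta\ge\alpha>\alpha_N$ this forces $|\D^{N/2}u|_2^2<\alpha_N/\beta<1$. That is why the paper uses it only in the small-norm regime of Lemma \ref{l:gamma}.
\item Lemma \ref{l:coer} gives you merely $[u]_\mu\le C(\bar\rho)$, and $C(\bar\rho)$ may be large.
\item For $N=2s$ the seminorm $|\D^{N/2}u|_2$ is dilation invariant, so you cannot scale it down.
\end{itemize}
By the sharpness of $\alpha_N$, no uniform bound on $\int_{\R^N}|u|^q(e^{\alpha u^2}-1)\,\dx$ over such a set can be expected. So (\ref{a:std}) and the lemmas you cite do not supply the uniform estimate your right-continuity step rests on.

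\textbf{The fix.} Rescale the domain instead of the amplitude. With $\sigma\coloneqq\rho_0/\rho<1$ and $v\coloneqq u(\sigma^{-1/N}\cdot)\in\cD(\rho_0)$,
\begin{equation*}
J(v)=\sigma J(u)+\frac{\sigma}{2}\bigl(\sigma^{-2s/N}-1\bigr)[u]_\mu^2 .
\end{equation*}
Only your coercivity bound on $[u]_\mu$ is needed, and $F$ is never estimated. Letting $\rho\downarrow\rho_0$ and then $\varepsilon\to0$ gives $m(\rho_0)\le\liminf m(\rho)$.

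This is the mechanism behind the paper's proof. It observes that $m(\rho)/\rho=\inf_{u\in\cD(1)}\Phi_u(\rho)$ is an infimum of affine functions of $\rho^{-2s/N}$. It is therefore concave in that variable and continuous wherever it is finite, with no growth estimates at all.

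\textbf{A smaller point in the last claim.} Your assertion $\rho_*\le\rho_\infty$ is not justified, since Lemma \ref{l:coer} is only a sufficient condition for finiteness of $m$. You do not need it. Apply your pointwise argument directly at $\rho_*$: for every $u\in\cD(\rho_*)$,
\begin{equation*}
J(u)=\lim_{\rho\uparrow\rho_*}J\bigl(\sqrt{\rho/\rho_*}\,u\bigr)\ge0,
\end{equation*}
because $m=0$ on $(0,\rho_*)$. Hence $m(\rho_*)=0$ whenever $0<\rho_*<+\infty$, even without assuming finiteness.
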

\begin{proof}
Observe that $m(\rho) > -\infty$ for $0 < \rho < (2 C_{N,2_\#} \overline{\eta}_\infty)^{-N/(2s)}$ from Lemma \ref{l:coer}. Following \cite[Proof of Lemma 2.2 (v)]{JL_min}, we will prove that $\rho \mapsto m(\rho) / \rho$ is continuous. Given $u \in \cD(1)$, on $\bigl( 0 , (2 C_{N,2_\#} \overline{\eta}_\infty)^{-N/(2s)} \bigr)$ we define the function
\begin{equation*}
\rho \mapsto \Phi_u(\rho) \coloneqq \frac1\rho J\bigl( u(\cdot / \rho^{1/N}) \bigr) = \frac{1}{2 \rho^{2s/N}} [u]_\mu^2 - \int_{\R^N} F(u) \, \dx.
\end{equation*}
Note that $\Phi_u$ is concave (linear) in $\rho^{-2s/N}$. Since $m(\rho) / \rho = \inf_{u \in \cD(1)} \Phi_u(\rho)$ and $\rho \mapsto \rho^{-2s/N}$ is continuous, we obtain that $\rho \mapsto m(\rho) / \rho$ is continuous. Now, assume that $\overline{\eta}_\infty > 0$ and $m(\overline{\rho}) > -\infty$, where $\overline{\rho} \coloneqq (2 C_{N,2_\#} \overline{\eta}_\infty)^{-N/(2s)}$ for simplicity. From the monotonicity of $m$, it is clear that
\begin{equation*}
\liminf_{\rho \to \overline{\rho}^-} m(\rho) = \limsup_{\rho \to \overline{\rho}^-} m(\rho) \ge m(\overline{\rho}),
\end{equation*}
therefore it suffices to prove that, for every $u \in \cD(\overline{\rho})$,
\begin{equation*}
\lim_{\rho \to \overline{\rho}^-} m(\rho) \le J(u).
\end{equation*}
We fix $u \in \cD(\overline{\rho})$. If $u=0$, then the inequality above is obvious; hence, we can assume that $u \ne 0$. For $\varepsilon \in (0,|u|_2)$, we define $u_\varepsilon \coloneqq (|u|_2 - \varepsilon) |u|_2^{-1} u \in \cS(|u|_2 - \varepsilon) \subset \cD(\overline{\rho})$. Since $m$ is nonincreasing and $u_\varepsilon \to u$ in $X^\mu$ as $\varepsilon \to 0^+$, we obtain
\begin{equation*}
\lim_{\rho \to \overline{\rho}^-} m(\rho) \le \lim_{\varepsilon \to 0^+} J(u_\varepsilon) = J(u). \qedhere
\end{equation*}
\end{proof}

\begin{Lem}\label{l:zero}
If (\ref{a:std})-(\ref{a:supq}) hold and $\rho_*$ satisfies \eqref{eq:zero_rho_star} and \eqref{eq:two_etas}, then $m(\rho_*) = 0$ is attained at a function in $\cS(\rho_*)$ that is a solution to \eqref{eq:} for some $\lambda > 0$.
\end{Lem}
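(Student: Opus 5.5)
Our plan is to approximate $\rho_*$ from above and pass to the limit. Choose $\rho_n \searrow \rho_*$ with $\rho_n < (2 C_{N,2_\#} \overline{\eta}_\infty)^{-N/(2s)}$, which is possible by \eqref{eq:zero_rho_star}. Since $\rho_n > \rho_*$, we have $m(\rho_n) < 0$, and Theorem \ref{t:neg} (3) gives minimisers $u_n \in \cS(\rho_n)$ with $J(u_n) = m(\rho_n) < 0$. Each $u_n$ solves \eqref{eq:} with some $\lambda_n > 0$ and satisfies the Poho\v{z}aev identity of Lemma \ref{lem:Poh}. By Lemma \ref{l:cont}, $m$ is continuous on this interval, hence $m(\rho_n) \to m(\rho_*)$. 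Moreover, $m(\rho_*) = 0$, since $m(\rho_*)$ is finite by Lemma \ref{l:coer}. Also by Lemma \ref{l:coer}, applied on $\cD(\rho_1)$, the sequence $(u_n)_n$ is bounded in $X^\mu_\cG$.

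The key step is a uniform lower bound that keeps the $u_n$ away from $0$. For this we use Lemma \ref{l:gamma} with $\rho = \rho_n$. Since $\rho_n \to \rho_*$ and \eqref{eq:two_etas} is strict at $\rho_*$, it holds for $\rho_n$ with $n$ large. Inspecting the proof of Lemma \ref{l:gamma} shows that $\gamma$ can be chosen uniformly for $\rho$ in a small neighbourhood of $\rho_*$. Then any $u \in \cD(\rho_n)$ with $[u]_\mu^2 \le 4\gamma$ has $J(u) \ge c[u]_\mu^2 \ge 0$ for some $c > 0$. Because $J(u_n) < 0$, we must have $[u_n]_\mu^2 > 4\gamma$. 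This bound also prevents vanishing. If $(u_n)_n$ satisfied \eqref{eq:vanish} (or, when $N = K$, converged weakly to $0$), Lemma \ref{lem:Lions} together with (\ref{a:std})--(\ref{a:subci}) would give $|F_+(u_n)|_1 \to 0$. Then $\liminf_n J(u_n) \ge \frac12 \liminf_n [u_n]_\mu^2 \ge 2\gamma > 0$, contradicting $J(u_n) < 0$. Hence, up to translations in $z$, we get $u_n \rightharpoonup u \neq 0$ in $X^\mu_\cG$, with $u \in \cD(\rho_*)$ by weak lower semicontinuity of the $L^2$ norm.

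Next we upgrade to strong convergence and identify the limit; we expect this to be the main obstacle. We would repeat the splitting argument of Lemma \ref{l:Lcpt}, applied to a sequence that minimises along varying constraints. Suppose a second nontrivial profile $v$ appeared in $u_n - u$ after translation. The Brezis--Lieb splitting would then give $0 = \lim_n J(u_n) \ge m(\rho_u) + m(\rho_v) + m(\rho')$. Here $m(\rho') \ge 0$, or $\liminf_n J(w_n) \ge 0$ in the case $\rho' = 0$, exactly as in the proof of Lemma \ref{l:Lcpt}. Also $\rho_u + \rho_v + \rho' \le \rho_*$. This forces $J(u) = m(\rho_u) = 0$ with $\rho_u < \rho_*$ and $u \neq 0$, which contradicts Lemma \ref{l:noatt}. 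Therefore $u_n - u \to 0$ in $L^p$ for $p \in (2, 2^*)$. Weak lower semicontinuity then gives
\begin{equation*}
0 = m(\rho_*) \le J(u) \le \liminf_n J(u_n) = 0 ,
\end{equation*}
so $u$ is a nonzero minimiser of $J$ on $\cD(\rho_*)$.

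Finally we show that the mass is exactly $\rho_*$. If $|u|_2^2 < \rho_*$, then $m(|u|_2^2) = 0$ would be attained at the nonzero function $u$, contradicting Lemma \ref{l:noatt}. Hence $u \in \cS(\rho_*)$. By Lemma \ref{lem:Poh} there is $\lambda \ge 0$ such that $(\lambda, u)$ solves \eqref{eq:} and satisfies the Poho\v{z}aev identity. If $\lambda = 0$, combining the Poho\v{z}aev identity with $J(u) = 0$ gives $0 = J(u) = \frac{s}{N}[u]_\mu^2$, so $u = 0$, which is absurd. Thus $\lambda > 0$, as in Remark \ref{r:lambda}.
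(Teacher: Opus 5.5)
Your proposal is correct and follows essentially the paper's route: approximating $\rho_*$ from above, ruling out vanishing via Lemma \ref{l:gamma}, a Brezis--Lieb splitting, Lemma \ref{l:noatt} to force $|u|_2^2=\rho_*$, and the Poho\v{z}aev identity to get $\lambda>0$. There are two differences. The paper uses approximate minimisers ($m(\rho_n)\le J(u_n)<\frac12 m(\rho_n)$) rather than the exact ones from Theorem \ref{t:neg}~(3). Also, your second-profile exclusion is unnecessary: the remainder $v_n=u_n(\cdot+z_n')-u$ already satisfies $\limsup_n|v_n|_2^2\le\rho_*-|u|_2^2<\rho_*$, so $J(v_n)\ge 0$ for large $n$, and a single Brezis--Lieb split gives $0\ge J(u)\ge 0$ without any $L^p$-compactness.
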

\begin{proof}
Let $(\rho_n)_n \subset \bigl( \rho_* , (2 C_{N,2_\#} \overline{\eta}_\infty)^{-N/(2s)} \bigr)$ be a decreasing sequence such that $\rho_n \to \rho_*$. For every $n$, since $- \infty < m(\rho_n) < 0$, we fix $u_n \in \cD(\rho_n) \subset \cD(\rho_1)$ such that
\begin{equation}\label{eq:mrhon}
m(\rho_n) \le J(u_n) < \frac12 m(\rho_n).
\end{equation}
From Lemma \ref{l:coer}, $(u_n)_n$ is bounded, while from Lemma \ref{l:gamma},
\begin{equation}\label{eq:[]>0}
\liminf_n [u_n]_\mu > 0.
\end{equation}
Assume $N>K$. If \eqref{eq:vanish} holds, then Lemma \ref{lem:Lions} together with (\ref{a:std})--(\ref{a:subci}) implies that $\lim_n |F_+(u_n)|_1 = 0$. This and \eqref{eq:mrhon} yield
\begin{equation*}
0 \ge \limsup_n J(u_n) \ge \liminf_n \frac12 [u_n]_\mu^2,
\end{equation*}
which contradicts \eqref{eq:[]>0}. Consequently, there exist $(z_n)_n \subset \R^{N-K}$ and $u \in X_\cG^\mu \setminus \{0\}$ such that $u_n(\cdot + z'_n) \rightharpoonup u$ in $X_\cG^\mu$ along a subsequence.\\
If $N=K$, using the compact embedding $X_\cG^\mu \hookrightarrow L^p(\R^N)$, we argue similarly and obtain $u \in X_\cG^\mu \setminus \{0\}$ such that $u_n \rightharpoonup u$ in $X_\cG^\mu$ along a subsequence (cf. the proof of Lemma \ref{l:Lcpt}).\\
If $N>K$, we can assume that $u_n(x + z'_n) \to u(x)$ for a.e. $x \in \R^N$; likewise, if $N=K$, we can assume that $u_n(x) \to u(x)$ for a.e. $x \in \R^N$. Let us denote $\rho_0 \coloneqq |u|_2^2 \in (0,\rho_*]$ and, for every $n$, $v_n \coloneqq u_n(\cdot + z_n') - u$. Since $\limsup_n |v_n|_2^2 \le \rho_* - \rho_0 < \rho_*$ and $m(\rho) = 0$ for every $\rho \in (0,\rho_*)$, we have that $J(v_n) \ge 0$ for every sufficiently large $n$. From Lemma \ref{l:cont}, \eqref{eq:mrhon}, and the Brezis--Lieb lemma \cite{Brezis_Lieb}, we obtain
\begin{equation*}
0 = \lim_n J(u_n) = J(u) + \lim_n J(v_n) \ge J(u) \ge 0,
\end{equation*}
i.e., $J(u) = m(\rho_0)$. Since, from Lemma \ref{l:noatt}, $ J(v) > m(\rho)$ for every $\rho \in (0,\rho_*)$ and every $v \in \cD(\rho) \setminus \{0\}$, we have $\rho_0 = \rho_*$, that is, $u \in \cS(\rho_*)$. Lemma \ref{lem:Poh} and Remark \ref{r:lambda} conclude the proof.
\end{proof}

\begin{Lem}\label{lem:A}
If (\ref{a:std})-(\ref{a:supq}) hold and $\rho_*$ satisfies \eqref{eq:zero_rho_star} and \eqref{eq:two_etas}, then there exists $\overline{\rho}_* \in (0,\rho_*)$ such that for all $\rho \in (\overline{\rho}_* , \rho_*]$ there holds
\begin{equation*}
m_*(\rho) < \min \left\{ \frac{s}{N} , \frac12 \left( \frac12 - C_{N,2_\#} \overline{\eta}_0 \rho^{2s/N} \right) \right\} \gamma_* \le \inf_{\cA(\rho)} J,
\end{equation*}
where
\begin{equation*}
\cA(\rho) \coloneqq \Set{ u \in \cB(\rho) | [u]_\mu^2 \le 4 \gamma_* }.
\end{equation*}
\end{Lem}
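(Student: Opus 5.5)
The plan is to prove the two inequalities separately: the right one from Lemma \ref{l:gamma}, the left one by testing $m_*(\rho)$ with a rescaled copy of the minimiser from Lemma \ref{l:zero}. Throughout, $\gamma_*$ is the constant of Lemma \ref{l:gamma} for $\rho=\rho_*$, and \eqref{eq:two_etas} is assumed at $\rho_*$.

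\textbf{Right inequality.} First I check that $\gamma_*$ also works for every $\rho\in(0,\rho_*]$. This holds because $\cD(\rho)\subset\cD(\rho_*)$, and in each of the three cases of the proof of Lemma \ref{l:gamma} the constants multiplying $[u]_\mu^2$ are monotone in $\rho$. Hence, for $u\in\cD(\rho)$ with $[u]_\mu^2\le 4\gamma_*$,
\begin{equation*}
J(u)\ge \frac12\left(\frac12-C_{N,2_\#}\overline{\eta}_0\rho^{2s/N}\right)[u]_\mu^2 .
\end{equation*}
The coefficient is at least $\frac12\bigl(\frac12-C_{N,2_\#}\overline{\eta}_0\rho_*^{2s/N}\bigr)>0$ by \eqref{eq:two_etas}. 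For $u\in\cA(\rho)$ we also have $[u]_\mu^2>\gamma_*$, so
\begin{equation*}
J(u)\ge \frac12\left(\frac12-C_{N,2_\#}\overline{\eta}_0\rho^{2s/N}\right)\gamma_*\ge \min\left\{\frac sN,\ \frac12\left(\frac12-C_{N,2_\#}\overline{\eta}_0\rho^{2s/N}\right)\right\}\gamma_* .
\end{equation*}
Taking the infimum over $\cA(\rho)$ gives the right inequality.

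\textbf{Left inequality.} Lemma \ref{l:zero} gives $u_*\in\cS(\rho_*)$ with $J(u_*)=m(\rho_*)=0$ and $u_*\neq0$. Suppose $[u_*]_\mu^2\le4\gamma_*$. Then the bound above would give $J(u_*)\ge c[u_*]_\mu^2>0$ for some $c>0$, which is impossible. Hence $[u_*]_\mu^2>4\gamma_*$.

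For $\rho\in(\rho_*/4,\rho_*]$, define $u_\rho\coloneqq\sqrt{\rho/\rho_*}\,u_*\in\cS(\rho)$. Then
\begin{equation*}
[u_\rho]_\mu^2=\frac{\rho}{\rho_*}[u_*]_\mu^2>\frac{\rho}{\rho_*}\,4\gamma_*>\gamma_*,
\end{equation*}
so $u_\rho\in\cB(\rho)$ and therefore $m_*(\rho)\le J(u_\rho)$. Since $J$ is continuous on $X^\mu$ (it is $\cC^1$ under (\ref{a:std})) and $u_\rho\to u_*$ in $X^\mu$ as $\rho\to\rho_*$, we get $J(u_\rho)\to J(u_*)=0$.

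The threshold is uniformly positive on $(0,\rho_*]$:
\begin{equation*}
\min\left\{\frac sN,\ \frac12\left(\frac12-C_{N,2_\#}\overline{\eta}_0\rho^{2s/N}\right)\right\}\gamma_*\ge c_0\coloneqq\min\left\{\frac sN,\ \frac12\left(\frac12-C_{N,2_\#}\overline{\eta}_0\rho_*^{2s/N}\right)\right\}\gamma_*>0 .
\end{equation*}
So I can choose $\overline{\rho}_*\in(\rho_*/4,\rho_*)$ such that $J(u_\rho)<c_0$ for all $\rho\in(\overline{\rho}_*,\rho_*]$. At $\rho=\rho_*$ this reads $J(u_*)=0<c_0$. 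This yields the left inequality.

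The only delicate point is that $\gamma_*$, fixed at $\rho_*$, remains valid for smaller $\rho$; this is settled by the monotonicity in $\rho$ noted above.
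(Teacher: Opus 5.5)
Your proposal is correct and is essentially the paper's proof. The paper also takes the zero-energy minimiser $u\in\cS(\rho_*)$ from Lemma \ref{l:zero} and uses Lemma \ref{l:gamma} to get $[u]_\mu^2>4\gamma_*$ and the lower bound on $\inf_{\cA(\rho)}J$; it then tests $m_*(\rho)$ with $tu$ for $t\in(\tau,1]$, $\tau\ge\frac12$, and sets $\overline{\rho}_*=\tau^2\rho_*$, where your $t=\sqrt{\rho/\rho_*}$ plays the role of the paper's $t$. Your explicit checks that $\gamma_*$, fixed at $\rho_*$, still works for $\rho<\rho_*$, and that the threshold is bounded below by the $\rho$-independent constant $c_0>0$, are points the paper leaves implicit, so they are a welcome addition rather than a deviation.
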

\begin{proof}
From Lemma \ref{l:zero}, there exists $u \in \cS(\rho_*)$ such that $J(u) = 0$. Additionally, from Lemma \ref{l:gamma},
\begin{equation*}
[u]_\mu^2 > 4 \gamma_* \quad \text{ and } \quad \inf_{\cA(\rho)} J \ge \frac12 \left( \frac12 - C_{N,2_\#} \overline{\eta}_0 \rho^{2s/N} \right) \gamma_*.
\end{equation*}
Then, there exists $\tau \in [\frac12,1)$ such that for all $t \in (\tau,1]$ we have
\begin{equation*}
[tu]_\mu^2 > \gamma_* \quad \text{ and } \quad J(tu) < \min \left\{ \frac{s}{N} , \frac12 \left( \frac12 - C_{N,2_\#} \overline{\eta}_0 \rho^{2s/N} \right) \right\} \gamma_*.
\end{equation*}
The proof is complete setting $\overline{\rho}_* \coloneqq \tau^2 \rho_*$.
\end{proof}

\begin{Rem}\label{rem:A}
Under (\ref{a:std})-(\ref{a:supq}) and \eqref{eq:rho0etas}, one can prove the existence of $\overline{\rho}_{*,0} \in (0,\rho_{*,0})$ such that a result analogous to Lemma \ref{lem:A} holds in the case $\mu=0$ and $N\ge1$.
\end{Rem}

\begin{Lem}\label{l:cpt}
Assume that (\ref{a:std})-(\ref{a:supq}) hold, $\rho_*$ satisfies \eqref{eq:zero_rho_star} and \eqref{eq:two_etas}, and let $\overline{\rho}_*$ be given by Lemma \ref{lem:A}. If $\rho \in (\overline{\rho}_* , \rho_*]$, $(u_n)_n \subset \cB(\rho)$, and $\lim_n J(u_n) = m_*(\rho)$, then there exists $u \in \cB(\rho) \cap \cS(\rho)$ and $\lambda > 0$ such that, up to subsequences and translations, $u_n \to u$ in $X^\mu$ and $(\lambda,u)$ is a solution to \eqref{eq:}-\eqref{eq:L2}.
\end{Lem}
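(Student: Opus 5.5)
We follow the strategy of Lemma \ref{l:Lcpt} and Lemma \ref{l:zero}, with one extra step. The new difficulty is that $\cB(\rho)$ is not closed under weak limits, so we must show the limit stays in $\cB(\rho)$ and that it is a free critical point there.

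First, by Lemma \ref{l:coer} (which applies since $\rho \le \rho_*$ satisfies \eqref{eq:etai}), $(u_n)_n$ is bounded in $X_\cG^\mu$. Next we rule out $[u_n]_\mu^2 \le 4\gamma_*$ along a subsequence. By Lemma \ref{lem:A}, $\inf_{\cA(\rho)} J$ is strictly larger than $m_*(\rho)$, so for large $n$ we have $[u_n]_\mu^2 > 4\gamma_*$. Vanishing in the sense of \eqref{eq:vanish} (or, if $N=K$, a zero weak limit obtained via the compact embedding) is also impossible. Indeed, it would give $|F_+(u_n)|_1 \to 0$ by Lemma \ref{lem:Lions} and (\ref{a:std})--(\ref{a:subci}), hence $m_*(\rho) \ge \liminf_n \frac12 [u_n]_\mu^2 \ge 2\gamma_*$. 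This contradicts $m_*(\rho) < \frac{s}{N}\gamma_* \le \gamma_*$. Hence, after translations in $z$, $u_n \rightharpoonup u \ne 0$ in $X_\cG^\mu$ and a.e.

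To get compactness, set $v_n \coloneqq u_n(\cdot + z_n') - u$. By Brezis--Lieb, $J(u_n) = J(u) + J(v_n) + o(1)$ and $|u_n|_2^2 = |u|_2^2 + |v_n|_2^2 + o(1)$. In particular $\limsup_n |v_n|_2^2 \le \rho - |u|_2^2 < \rho_*$, so $J(v_n) \ge m(|v_n|_2^2) = 0$ eventually, and hence $J(u) \le m_*(\rho)$.

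The key step is to show $[u]_\mu^2 > \gamma_*$, i.e.\ $u \in \cB(\rho)$. If instead $[u]_\mu^2 \le \gamma_*$, Lemma \ref{l:gamma} gives $J(u) \ge 0$. Then $[v_n]_\mu^2 = [u_n]_\mu^2 - [u]_\mu^2 + o(1) > 3\gamma_*$ eventually, and $v_n \in \cD(\rho)$, so $v_n \in \cB(\rho)$. This gives $J(v_n) \ge m_*(\rho)$ and thus $J(u) \le 0$, so $J(u)=0$. Since $|u|_2^2 < \rho_*$ would contradict Lemma \ref{l:noatt}, we would need $|u|_2^2 = \rho_*$. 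But by Lemma \ref{l:gamma}, $J(u) \ge c[u]_\mu^2 > 0$ for $u \ne 0$ with $[u]_\mu^2 \le 4\gamma_*$, again a contradiction. Hence $u \in \cB(\rho)$, so $J(u) \ge m_*(\rho)$. Combined with $J(v_n) \ge 0$, this forces $J(u) = m_*(\rho)$ and $J(v_n) \to 0$.

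From $J(v_n) \to 0$ and $\limsup_n |v_n|_2^2 < \rho_*$, we want $[v_n]_\mu \to 0$. This is the \textbf{main obstacle}. We plan to argue as in Lemma \ref{l:Lcpt}: if $(v_n)_n$ does not vanish, extract a further nonzero profile $v$ and use the strict subadditivity of Lemma \ref{lem:sub+} together with Lemma \ref{l:noatt} to reach a contradiction. If $(v_n)_n$ vanishes, then $|F_+(v_n)|_1 \to 0$, so $[v_n]_\mu \to 0$ and $v_n \to 0$ in $L^p$ for $p \in (2,2^*)$. This yields $u_n(\cdot + z_n') \to u$ in $X^\mu$ up to the $L^2$ part, which is handled at the end.

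Finally, $u$ is an interior point of $\cB(\rho)$ with respect to the constraint $[\cdot]_\mu^2 > \gamma_*$, and $J(u) = m_*(\rho)$. So $u$ is a local minimiser of $J|_{\cD(\rho)}$, and Lemma \ref{lem:Poh} yields $\lambda \ge 0$ and the Poho\v{z}aev identity. Combining it with the Nehari identity as in Lemma \ref{l:Lcpt}: if $\lambda = 0$, then $J(u) = \frac{s}{N}[u]_\mu^2 > \frac sN \gamma_* > m_*(\rho)$, a contradiction. Therefore $\lambda > 0$ and, by the Karush--Kuhn--Tucker conditions, $u \in \cS(\rho)$. This gives $|u_n|_2 \to |u|_2$, hence $L^2$ convergence and full convergence in $X^\mu$.
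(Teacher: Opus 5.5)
Your proof is correct. It departs from the paper's in two places.

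\textbf{Showing $[u]_\mu^2>\gamma_*$.} The paper splits into two cases:
\begin{itemize}
\item $\rho'=\rho$: it estimates $\lim_n J(v_n)\ge\gamma_*$ directly from $|v_n|_2\to0$;
\item $\rho'<\rho$: it uses $J(v_n)\ge m_*(\rho)$ together with $J(u)>0$ from Lemma \ref{l:noatt}.
\end{itemize}
The split is needed there because Lemma \ref{l:noatt} is unavailable when $\rho'=\rho=\rho_*$. You instead take $J(u)\ge\frac12\bigl(\frac12-C_{N,2_\#}\overline{\eta}_0\rho_*^{2s/N}\bigr)[u]_\mu^2>0$ straight from Lemma \ref{l:gamma}. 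That bound holds for every nonzero $u\in\cD(\rho_*)$ with $[u]_\mu^2\le4\gamma_*$, whatever $|u|_2^2$ is. Combined with $v_n\in\cB(\rho)$ for large $n$, it handles both cases at once. Your detour through Lemma \ref{l:noatt} and ``$|u|_2^2=\rho_*$'' is therefore superfluous.

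\textbf{Your ``main obstacle'' $[v_n]_\mu\to0$.} This step is not needed. The paper first uses the Poho\v{z}aev argument to get $\lambda>0$, hence $|u|_2^2=\rho$ and $u_n(\cdot+z_n')\to u$ in $L^2(\R^N)$. Boundedness and \eqref{eq:GNsym} then give convergence in $L^{2_\#}(\R^N)$. So $\int_{\R^N}F(u_n)\,\dx\to\int_{\R^N}F(u)\,\dx$, and $J(u_n)\to J(u)$ forces $[u_n]_\mu\to[u]_\mu$.

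If you keep your route, it closes more simply than you suggest, with no strict subadditivity. A nonzero profile $v$ of $(v_n)_n$ would satisfy $0<|v|_2^2<\rho_*$, so $J(v)>0$ by Lemma \ref{l:noatt}. The remainder has $L^2$ mass below $\rho_*$, hence nonnegative energy, for large $n$. This contradicts $J(v_n)\to0$, and vanishing then gives $[v_n]_\mu\to0$.

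\textbf{A small slip.} $\frac sN\le1$ fails when $s>N$. The vanishing case is still excluded, because the minimum in Lemma \ref{lem:A} is at most $\frac14\gamma_*<2\gamma_*$.
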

\begin{proof}
From Lemma \ref{l:coer}, $(u_n)_n$ is bounded in $X_\cG^\mu \hookrightarrow H^s(\R^N)$.\\
Assume that $N>K$. If \eqref{eq:vanish} holds, then (\ref{a:std})--(\ref{a:subci}) and Lemma \ref{lem:Lions} yield $|F_+(u_n)|_1 \to 0$. Thus, from Lemma \ref{lem:A},
\begin{equation*}
\frac12 \left( \frac12 - C_{N,2_\#} \overline{\eta}_0 \rho^{2s/N} \right) \gamma_* \ge \lim_n J(u_n) \ge \frac12 \limsup_n [u_n]_\mu^2 \ge \frac12 \gamma_*,
\end{equation*}
which is impossible. Consequently, there exist $(z_n)_n \subset \R^{N-K}$ and $u \in \cD(\rho) \setminus \{0\}$ such that $u_n(\cdot + z_n') \rightharpoonup u$ in $X_\cG^\mu$ along a subsequence.\\
If $N=K$, using the compact embedding $X_\cG^\mu \hookrightarrow L^p(\R^N)$, we argue similarly and obtain $u \in X_\cG^\mu \setminus \{0\}$ such that $u_n \rightharpoonup u$ in $X_\cG^\mu$ along a subsequence (cf. the proof of Lemma \ref{l:Lcpt}). In what follows, to have uniform notations, we use the convention $z_n' = 0$ when $N=K$.\\
Let $v_n \coloneqq u_n(\cdot + z_n') - u$ and $\rho' \coloneqq |u|_2^2 \in (0,\rho]$. Observe that $[u_n]_\mu^2 > 4 \gamma_*$ for every sufficiently large $n$; otherwise, from Lemma \ref{lem:A}, $m_*(\rho) = \lim_n J(u_n) \ge \inf_{\cA(\rho)} J > m_*(\rho)$. Assume by contradiction that $u \not \in \cB(\rho')$, which implies
\begin{equation*}
\lim_n [v_n]_\mu^2 = \lim_n [u_n]_\mu^2 - [u]_\mu^2 \ge 3 \gamma_*.
\end{equation*}
In particular,
\begin{equation}\label{eq:vn}
[v_n]_\mu^2 \ge 2 \gamma_* \quad \text{if $n$ is sufficiently large.}
\end{equation}
We distinguish two cases.\\
\textbf{Case 1:} $\rho' = \rho$, i.e., $|v_n|_2 \to 0$. From (\ref{a:std}), (\ref{a:subci}), \eqref{eq:GNsym} with $p = 2_\#$, and \eqref{eq:vn} we have
\begin{equation*}
\lim_n J(v_n) \ge \frac12 \limsup_n [v_n]_\mu^2 \ge \gamma_*.
\end{equation*}
Then, using $J(u) \ge m(\rho) = 0$ and the Brezis--Lieb lemma \cite{Brezis_Lieb}, we obtain
\begin{equation*}
m_*(\rho) = \lim_n J(u_n) = J(u) + \lim_n J(v_n) \ge \gamma_*,
\end{equation*}
which contradicts Lemma \ref{lem:A}.\\
\textbf{Case 2:} $\rho' < \rho$. From \eqref{eq:vn} and the monotonicity of $m_*$ we have
\begin{equation*}
\lim_n J(v_n) \ge \limsup_n m_*(|v_n|_2^2) \ge m_*(\rho).
\end{equation*}
Since $m(\rho') = 0$ is attained only at $0$ (cf. Lemma \ref{l:noatt}), the Brezis--Lieb lemma \cite{Brezis_Lieb} yields the contradiction
\begin{equation*}
m_*(\rho) \ge J(u) + m_*(\rho) > m_*(\rho).
\end{equation*}
Having proved that $u \in \cB(\rho')$, we have $J(u) \ge m_*(\rho')$. Moreover, $v_n \in \cD(\rho)$ for every sufficiently large $n$ because $u \ne 0$, whence $\lim_n J(v_n) \ge m(\rho) = 0$. From the Brezis--Lieb lemma \cite{Brezis_Lieb} and the monotonicity of $m_*$ we infer
\begin{equation*}
m_*(\rho) = \lim_n J(u_n) = J(u) + \lim_n J(v_n) \ge  m_*(\rho') \ge m_*(\rho),
\end{equation*}
which implies
\begin{equation*}
J(u) = m_*(\rho).
\end{equation*}
From Lemma \ref{lem:Poh}, there exists $\lambda \ge 0$ such that $(\lambda,u)$ is a solution to \eqref{eq:} and satisfies the Poho\v{z}aev identity. In particular, if $\lambda = 0$ (which is the case if $\rho' < \rho$, cf. the proof of Lemma \ref{l:Lcpt}), then Lemma \ref{lem:A} yields
\begin{equation*}
\frac{s}{N} \gamma_* \ge m_*(\rho) = J(u) = \frac{s}{N} [u]_\mu^2 > \frac{s}{N} \gamma_*,
\end{equation*}
which is impossible. Consequently, $\rho' = \rho$ and $u_n(\cdot + z_n') \to u$ in $L^2(\R^N)$. Since $(u_n)_n$ is bounded in $X^\mu$, this implies that $u_n \to u$ in $L^{2_\#}(\R^N)$. Then, from (\ref{a:std}), (\ref{a:subci}), and \eqref{eq:GNsym} with $p = 2_\#$ we obtain
\begin{equation*}
m_*(\rho) = \lim_n \frac12 [u_n]_\mu^2 - \int_{\R^N} F(u_n) \, \dx \ge \frac12 [u]_\mu^2 - \int_{\R^N} F(u) \, \dx = m_*(\rho),
\end{equation*}
which allows us to conclude that $u_n(\cdot + z_n') \to u$ in $X^\mu$.
\end{proof}

\begin{proof}[Proof of Theorem \ref{t:pos}]
In view of Lemma \ref{l:cpt}, we are only left to prove that $m_*(\rho) > 0$ if $\rho \in (\overline{\rho}_* , \rho_*)$ and $m_*(\rho_*) = 0$. Clearly, $m_*(\rho') \ge m(\rho') = 0$ for all $\rho' \in (\overline{\rho}_* , \rho_*]$. If $\rho \in (\overline{\rho}_* , \rho_*)$, then Lemma \ref{l:noatt} yields that $m_*(\rho) > 0$. From Lemma \ref{l:gamma}, the global minimiser (say, $v$) given by Lemma \ref{l:zero} belongs to $\cB(\rho_*)$, hence $m_*(\rho_*) \le J(v) = m(\rho_*) = 0$.
\end{proof}
\begin{proof}[Proof of Theorem \ref{t:noex}]
The statement is obvious if $m(\rho_*) = -\infty$, so let us suppose the opposite. From Lemma \ref{l:cont}, $m(\rho_*) = 0$. If there exists $u \in \cD(\rho_*) \setminus \{0\}$ that minimises $J$ over $\cD(\rho_*)$, then $M(u) = 0$, where $M$ is defined in \eqref{eq:D=0}, and so, 
\begin{equation*}
0 = J(u) = J(u) - \frac{1}{2s} M(u) = \frac{N}{4s} \int_{\R^N} f(u) u - 2_\# F(u) \, \dx \ne 0. \qedhere
\end{equation*}
\end{proof}

\section{Counterexamples to \eqref{eq:zero_rho_star} or \eqref{eq:two_etas}}\label{s:app}

In this section, we provide an insight into the relation between Theorems \ref{t:pos} and \ref{t:noex}, together with two examples. Specifically, we present scenarios where $\rho_*$ can be computed, which, in turn, demonstrate why $\rho_*$ cannot satisfy \eqref{eq:zero_rho_star} (respectively, \eqref{eq:two_etas}) if $2_\# F(t) \le f(t) t$ (respectively, $f(t) t \le 2_\# F(t)$) for all $t \in \R$. We shall denote
\begin{equation*}
	\underline{\eta}_\infty \coloneqq \liminf_{|t| \to +\infty} \frac{F_+(t)}{|t|^{2_\#}}.
\end{equation*}

\begin{Prop}\label{pr:Ff}
	If (\ref{a:std}) holds, $F$ is positive somewhere, and $2_\# F(t) \le f(t) t$ for all $t \in \R$, then $\overline{\eta}_\infty > 0$, $\overline{\eta}_\infty \ge \overline{\eta}_0$, $\underline{\eta}_\infty \ge \underline{\eta}_0$,
	\begin{equation}\label{eq:Ff}
		(2 C_{N,2_\#} \underline{\eta}_\infty)^{-N/(2s)} \ge \rho_* \ge (2 C_{N,2_\#} \overline{\eta}_\infty)^{-N/(2s)},
	\end{equation}
	and $\rho_* = 0$ if $\overline{\eta}_\infty = +\infty$. If, additionally, $2_\# F(t) < f(t) t$ for all $t \ne 0$, then $\overline{\eta}_\infty > \overline{\eta}_0$ and $\underline{\eta}_\infty > \underline{\eta}_0$.
\end{Prop}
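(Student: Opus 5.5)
The starting point is the monotonicity that $2_\# F \le f t$ encodes. Setting $g(t) \coloneqq F(t)|t|^{-2_\#}$ for $t \ne 0$, we get $g'(t) = |t|^{-2_\#}\bigl(f(t) - 2_\# F(t)/t\bigr)$. This gives $g'(t) \ge 0$ for $t>0$ and $g'(t) \le 0$ for $t<0$. Hence $t \mapsto F(t)|t|^{-2_\#}$ is nondecreasing in $|t|$ on each half-line, and strictly increasing under the strict hypothesis; the same holds for $F_+|t|^{-2_\#}$, at least where $F>0$.

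The comparisons of the $\eta$'s follow from this monotonicity. Since $F(t_0)>0$ for some $t_0$, say $t_0>0$, we have $F(t) \ge F(t_0)t_0^{-2_\#}|t|^{2_\#}$ for $t \ge t_0$, so $\overline{\eta}_\infty>0$. Monotonicity also gives $F_+(t)|t|^{-2_\#} \le \sup_{|\tau| \ge T} F_+(\tau)|\tau|^{-2_\#}$ for each small $t$, which yields $\overline{\eta}_0 \le \overline{\eta}_\infty$ and $\underline{\eta}_0 \le \underline{\eta}_\infty$. The only care needed is to treat the positive and negative half-lines separately and compare the limit along each. In the strict case, $\underline{\eta}_\infty > \underline{\eta}_0$ comes from comparing through an intermediate point: on the half-line realising $\underline{\eta}_0$ (or where $F>0$), the value at a fixed $t_1$ lies strictly between the limits at $0$ and at $\infty$. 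Some care is needed when $F \le 0$ on one half-line or when a limit is infinite. In that situation I would use $g(t) \le g(t_1) < g(t_2) \le \lim$ with $t_1<t_2$, and choose $t_1$ with $F(t_1)>0$ whenever the limit at $0$ is positive.

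For the upper bound on $\rho_*$, take $\rho > (2C_{N,2_\#}\underline{\eta}_\infty)^{-N/(2s)}$. Let $w$ be the optimiser from Theorem~\ref{t:GN} with $p=2_\#$, and consider $J(t\star w_\rho)$ as $t \to +\infty$, where $w_\rho$ is a rescaling lying in $\cS(\rho)$. As in the proof of Lemma~\ref{lem:neg}(ii), but now with $t\to\infty$, we write $J(t\star u)/t^{2s} = [u]_\mu^2/2 - (t^{N/2})^{-2_\#}\int F(t^{N/2}u)$. By Fatou's lemma and monotonicity, the last term has $\liminf \ge \underline{\eta}_\infty |u|_{2_\#}^{2_\#}$; one uses the pointwise limit together with the fact that the integrand is bounded below. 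Indeed, monotonicity gives $F(\tau) \ge 0$ for large $|\tau|$, and $F_-(\tau)|\tau|^{-2_\#}$ is nonincreasing in $|\tau|$, so the negative part is dominated by its value at $t=1$. The algebra in Lemma~\ref{lem:neg} then shows $m(\rho)<0$, hence $\rho_* \le \rho$.

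For the lower bound, take $\rho < (2C_{N,2_\#}\overline{\eta}_\infty)^{-N/(2s)}$ and $u \in \cD(\rho)\setminus\{0\}$. Monotonicity gives $F(t) \le \overline{\eta}_\infty |t|^{2_\#}$ for all $t$. Then \eqref{eq:GNsym} yields $J(u) \ge \bigl(\tfrac12 - \overline{\eta}_\infty C_{N,2_\#}\rho^{2s/N}\bigr)[u]_\mu^2 \ge 0$, so $m(\rho)=0$ and $\rho_* \ge \rho$. If $\overline{\eta}_\infty = +\infty$, the Fatou argument above gives $m(\rho)<0$ for every $\rho>0$; the hypothesis $\underline{\eta}_\infty$ there should be replaced by working along a sequence $t_n\to\infty$, on the half-line, realising the limsup, which monotonicity upgrades to a genuine limit. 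Hence $\rho_*=0$.

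The main obstacle is the careful justification of the Fatou and monotonicity steps when $F$ changes sign or when one half-line has $F\le0$. A secondary obstacle is making sure that $\overline{\eta}_\infty=+\infty$ really forces the limit, not merely the limsup, to be infinite on some half-line, which monotonicity provides.
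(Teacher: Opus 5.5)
Apart from the upper bound in \eqref{eq:Ff}, your plan is the paper's. You use the same monotonicity of $F(t)|t|^{-2_\#}$ on each half-line, the same non-strict comparisons of the $\eta$'s, and the same lower bound via \eqref{eq:GNsym} and $F\le\overline{\eta}_\infty|t|^{2_\#}$; the paper cites Remark~\ref{rem:rho*} with $C_F=\overline{\eta}_\infty$. For the upper bound the paper splits into cases. If $\overline{\eta}_\infty=+\infty$, it tests with a nonpositive $u\in\cS(\rho)$ and uses monotone convergence. Otherwise it assumes $\underline{\eta}_\infty$ is the limit at $+\infty$ and expands $J(t\star w)$ around $\underline{\eta}_\infty|w|_{2_\#}^{2_\#}$. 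It then handles $w_+$ by dominated convergence (via $\underline{\eta}_\infty=\sup_{t>0}F(t)t^{-2_\#}$) and $w_-$ by monotone convergence. Your bound $F(t^{N/2}w)t^{-N-2s}\ge F(w)$ for $t\ge1$ (in fact the integrand is nondecreasing in $t$) lets one Fatou/monotone-convergence step cover both cases. You never need to decide which half-line realises $\underline{\eta}_\infty$, and you still get $J(t\star w_\rho)\to-\infty$. One point should be made explicit. When $\overline{\eta}_\infty=+\infty$, the test function must have a nontrivial part of the sign on which $F|t|^{-2_\#}\to+\infty$, and the optimiser $w$ need not have one. Any $u\in\cS(\rho)$ of that sign works, since optimality plays no role once the Fatou bound is $+\infty$.

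The strict part needs fixing. The comparison must be made on the half-line realising $\underline{\eta}_\infty$, not $\underline{\eta}_0$. Suppose, say, $\underline{\eta}_\infty=\lim_{t\to+\infty}F_+(t)t^{-2_\#}$ and $F>0$ somewhere on $(0,+\infty)$. Then strict monotonicity gives $\underline{\eta}_\infty>\lim_{t\to0^+}F_+(t)t^{-2_\#}\ge\underline{\eta}_0$. Comparing on the half-line realising $\underline{\eta}_0$ only controls the limit at infinity on that side, which may exceed $\underline{\eta}_\infty$. The case you flag, $F\le0$ on a whole half-line, cannot be fixed by more care: there $\underline{\eta}_0=\underline{\eta}_\infty=0$ even under the strict inequality. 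For example, take $F(t)=(t_+)^p-(t_-)^2$ with $2_\#<p<2^*$. It satisfies (\ref{a:std}), is positive somewhere, and has $f(t)t-2_\#F(t)=(p-2_\#)(t_+)^p+(2_\#-2)(t_-)^2>0$ for $t\ne0$. So $\underline{\eta}_\infty>\underline{\eta}_0$ holds only when $F$ is positive somewhere on both half-lines (e.g.\ $f$ odd), whereas $\overline{\eta}_\infty>\overline{\eta}_0$ holds in general. The paper's one-line argument, that equality forces $F|t|^{-2_\#}$ to be constant on a half-line, has the same blind spot, because $\underline{\eta}_0$ and $\underline{\eta}_\infty$ are defined through $F_+$. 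This is therefore a defect of the statement, not of your route.
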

\begin{proof}
	Let $t_* \ne 0$ such that $F(t_*) > 0$. Observe that $t \mapsto F(t) |t|^{-2_\#}$ is nondecreasing on $(0,+\infty)$ and nonincreasing on $(-\infty,0)$, which implies $\overline{\eta}_\infty \ge \overline{\eta}_0$ and $\underline{\eta}_\infty \ge \underline{\eta}_0$. Moreover,
	\begin{equation}\label{eq:etaIsup}
		\overline{\eta}_\infty = \max \left\{ \lim_{t \to +\infty} \frac{F(t)}{t^{2_\#}} , \lim_{t \to -\infty} \frac{F(t)}{(-t)^{2_\#}} \right\} = \sup_{t \ne 0} \frac{F(t)}{|t|^{2_\#}} \ge \frac{F(t_*)}{|t_*|^{2_\#}} > 0.
	\end{equation}
	The second inequality in \eqref{eq:Ff} is obvious if $\overline{\eta}_\infty = +\infty$, while it follows from Remark \ref{rem:rho*} and \eqref{eq:etaIsup} otherwise. To prove the first, we will show that $m(\rho) = -\infty$ for every $\rho \in \bigl( (2 C_{N,2_\#} \underline{\eta}_\infty)^{-N/(2s)} , +\infty \bigr)$. Fix such a $\rho$ and assume that $\underline{\eta}_\infty = \lim_{t \to +\infty} F(t) t^{-2_\#}$ (the argument if $\underline{\eta}_\infty = \lim_{t \to -\infty} F(t) (-t)^{-2_\#}$ is analogous).\\
	\textbf{Case 1:} $\overline{\eta}_\infty = +\infty$ (i.e., $\lim_{t \to -\infty} F(t) (-t)^{-2_\#} = +\infty$). Fix $u \in \cS(\rho) \setminus \{0\}$ nonpositive. From the monotone convergence theorem,
	\begin{align*}
		J(t \star u) & = t^{2s} \left( \frac12 [u]_\mu^2 - t^{-N-2s} \int_{\R^N} F(t^{N/2} u) \, \dx \right) \\
		& = t^{2s} \left( \frac12 [u]_\mu^2 - \int_{\{u \ne 0\}} \frac{F(t^{N/2} u)}{(-t^{N/2} u)^{2_\#}} (-u)^{2_\#} \, \dx \right) \to -\infty \quad \text{as } t \to +\infty.
	\end{align*}
	\textbf{Case 2:} $\overline{\eta}_\infty < +\infty$. Let $w \in X_\cG^\mu$ be the function given by Theorem \ref{t:GN} for $p = 2_\#$. Up to rescaling, we can assume that $w \in \cS(\rho)$. We have
	\begin{equation}\label{eq:one}
		J(t \star w) = t^{2s} \left( \frac12 [w]_\mu^2 - \underline{\eta}_\infty |w|_{2_\#}^{2_\#} + \frac{1}{t^{N+2s}} \int_{\R^N} \underline{\eta}_\infty |t^{N/2} w|^{2_\#} - F(t^{N/2} w) \, \dx \right).
	\end{equation}
	Since $w$ satisfies \eqref{eq:GNsym} with equality and $p = 2_\#$,
	\begin{equation}\label{eq:two}
		\frac12 [w]_\mu^2 - \underline{\eta}_\infty |w|_{2_\#}^{2_\#} = [w]_\mu^2 \left( \frac12 - \underline{\eta}_\infty C_{N,2_\#} \rho^{2s/N} \right) < 0.
	\end{equation}
	Next, from (\ref{a:std}) and the fact that $\underline{\eta}_\infty = \sup_{t > 0} F(t) t^{-2_\#}$, there exists $C>0$ such that for all $t \ge 1$ there holds
	\begin{equation*}
		0 \le t^{-N-2s} \left( \underline{\eta}_\infty |t^{N/2} w_+|^{2_\#} - F(t^{N/2} w_+) \right) \le 2 \underline{\eta}_\infty |w_+|^{2_\#} + C w_+^2 \in L^1(\R^N).
	\end{equation*}
	Additionally, $t^{-N-2s} \bigl( \underline{\eta}_\infty |t^{N/2} w_+|^{2_\#} - F(t^{N/2} w_+) \bigr) \to 0$ a.e. as $t \to +\infty$, thus we can then use the dominated convergence theorem to infer that
	\begin{equation}\label{eq:three}
		\lim_{t \to +\infty} \frac{1}{t^{N+2s}} \int_{\R^N} \underline{\eta}_\infty |t^{N/2} w_+|^{2_\#} - F(t^{N/2} w_+) \, \dx = 0.
	\end{equation}
	Finally, in a way similar to Case 1, the monotone convergence theorem yields
	\begin{multline}\label{eq:four}
		\lim_{t \to +\infty} \frac{1}{t^{N+2s}} \int_{\R^N} \underline{\eta}_\infty |t^{N/2} w_-|^{2_\#} - F(-t^{N/2} w_-) \, \dx \\
		= \underline{\eta}_\infty |w_-|_{2_\#}^{2_\#} - \lim_{t \to +\infty} \int_{\R^N} \frac{F(-t^{N/2} w_-)}{t^{N+2s}} \, \dx
		= (\underline{\eta}_\infty - \overline{\eta}_\infty) |w_-|_{2_\#}^{2_\#} \le 0.
	\end{multline}
	From \eqref{eq:one}--\eqref{eq:four} conclude that $J(t \star w) \to -\infty$ as $t \to +\infty$.\\
	Lastly, if $\overline{\eta}_\infty = \overline{\eta}_0$ or $\underline{\eta}_\infty = \underline{\eta}_0$, then $t \mapsto F(t) |t|^{-2_\#}$ is constant on $(0,+\infty)$ or $(-\infty,0)$, which rules out that $2_\# F(t) < f(t) t$ for all $t \ne 0$.
\end{proof}

\begin{Rem}\label{r:Ff}
	Under the assumptions of Proposition \ref{pr:Ff}, $\rho_* = (2 C_{N,2_\#} \overline{\eta}_\infty)^{-N/(2s)}$ if $w \in X_\cG^\mu$ given by Theorem \ref{t:GN} for $p = 2_\#$ is signed\footnote{This is the case, e.g., if $0 < s \le 1$, as one can replace $w$ with $|w|$ or $-|w|$.}. Indeed, since $-w$ satisfies the same properties as $w$, we can assume that $w \le 0$. Moreover, we can assume that $\overline{\eta}_\infty < +\infty$. Replacing $\underline{\eta}_\infty$ with $\overline{\eta}_\infty$ in \eqref{eq:one}--\eqref{eq:four} and taking $\rho \in \bigl( (2 C_{N,2_\#} \overline{\eta}_\infty)^{-N/(2s)} , +\infty \bigr)$, \eqref{eq:two} is still satisfied, \eqref{eq:three} becomes trivial, and \eqref{eq:four} holds with equality.
\end{Rem}

\begin{Prop}\label{pr:fF}
	If (\ref{a:std}) holds, $F$ is positive somewhere, and $f(t) t \le 2_\# F(t)$ for all $t \in \R$, then $\overline{\eta}_0 > 0$, $\overline{\eta}_0 \ge \overline{\eta}_\infty$, $\underline{\eta}_0 \ge \underline{\eta}_\infty$, and
	\begin{equation}\label{eq:fF}
		\rho_* \ge (2 C_{N,2_\#} \overline{\eta}_0)^{-N/(2s)}.
	\end{equation}
	If, additionally, $f(t) t < 2_\# F(t)$ for all $t \ne 0$, then $\overline{\eta}_0 > \overline{\eta}_\infty$ and $\underline{\eta}_0 > \underline{\eta}_\infty$.
\end{Prop}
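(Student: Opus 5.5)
The plan mirrors the proof of Proposition \ref{pr:Ff}, with the monotonicity reversed. First I will show that $f(t)t \le 2_\# F(t)$ makes $t \mapsto F(t)|t|^{-2_\#}$ nonincreasing on $(0,+\infty)$ and nondecreasing on $(-\infty,0)$. Indeed, for $t>0$,
\begin{equation*}
\frac{\mathrm{d}}{\mathrm{d}t} \frac{F(t)}{t^{2_\#}} = \frac{f(t)t - 2_\# F(t)}{t^{2_\#+1}} \le 0,
\end{equation*}
and the computation for $t<0$ is analogous. Since $f$ is only continuous, I will apply this to the $\cC^1$ function $F$ and argue through the sign of the derivative.

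From this monotonicity, both one-sided limits at $0$ exist in $(-\infty,+\infty]$ and dominate every value on their side. Hence
\begin{equation*}
\overline{\eta}_0 = \max \left\{ \lim_{t \to 0^+} \frac{F(t)}{t^{2_\#}} , \lim_{t \to 0^-} \frac{F(t)}{(-t)^{2_\#}} \right\}_+ = \sup_{t \ne 0} \frac{F_+(t)}{|t|^{2_\#}} \ge \frac{F(t_*)}{|t_*|^{2_\#}} > 0,
\end{equation*}
where $t_*$ is a point with $F(t_*)>0$. The same monotonicity on each half-line gives $\overline{\eta}_0 \ge \overline{\eta}_\infty$ and $\underline{\eta}_0 \ge \underline{\eta}_\infty$. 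The positive parts need a little care: on a half-line where $F$ becomes negative, it stays negative farther out, so the limit of $F_+(t)|t|^{-2_\#}$ there is $0$, which is harmless.

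For \eqref{eq:fF}, if $\overline{\eta}_0 = +\infty$ the right-hand side is $0$ and there is nothing to prove. Otherwise, set $C_F \coloneqq \sup_{t \ne 0} F(t)|t|^{-2_\#}$. By the previous paragraph $C_F = \overline{\eta}_0 \in (0,+\infty)$, and in particular $F(t) \le \overline{\eta}_0 |t|^{2_\#}$ for all $t$. Then, exactly as in Remark \ref{rem:rho*}, for $u \in \cD(\rho)$ the inequality \eqref{eq:GNsym} with $p = 2_\#$ gives
\begin{equation*}
J(u) \ge \left( \frac12 - \overline{\eta}_0 C_{N,2_\#} \rho^{2s/N} \right) [u]_\mu^2 \ge 0
\end{equation*}
whenever $\rho \le (2 C_{N,2_\#} \overline{\eta}_0)^{-N/(2s)}$. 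So $m(\rho) = 0$ on that range, and the definition \eqref{eq:threshold} yields \eqref{eq:fF}. Note that the hypotheses of Remark \ref{rem:rho*} need not be checked separately, since the global bound $F \le \overline{\eta}_0 |t|^{2_\#}$ replaces them.

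Under the strict inequality, $F(t)|t|^{-2_\#}$ is strictly monotone on each half-line: its derivative is continuous and negative away from $0$ (or positive on $(-\infty,0)$). I expect this step to need the most care. Suppose, for instance, that $\overline{\eta}_\infty = \overline{\eta}_0$. Both are the maximum of the corresponding one-sided quantities, and on each half-line the value at infinity is strictly smaller than any finite value, which in turn is at most the limit at $0$. So equality can only hold if the relevant quantities are degenerate, namely if $\overline{\eta}_0 = 0$ (excluded) or if both equal $+\infty$. The case $\overline{\eta}_0 = +\infty = \overline{\eta}_\infty$ cannot occur, because $F(t)|t|^{-2_\#}$ is decreasing on each side and therefore bounded as $|t| \to \infty$ by its value at $\pm1$.

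For $\underline{\eta}$ the analogous argument applies, except in the case where both quantities vanish. This happens when, on the side realising $\underline{\eta}_0$ (the smaller one-sided limit at $0$), that limit is $0$. But strict monotonicity makes that limit strictly larger than any finite value of $F(t)|t|^{-2_\#}$ on that side, so the ratio is negative there and $F<0$ on that entire half-line. I would handle this either by observing that such a degenerate configuration is what the statement implicitly excludes, or by checking directly that the claimed strict inequality $\underline{\eta}_0 > \underline{\eta}_\infty$ fails there and therefore needs $\underline{\eta}_0>0$. In the remaining cases the strict inequalities follow directly from strict monotonicity.
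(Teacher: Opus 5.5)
Your route is the paper's. You use monotonicity of $t\mapsto F(t)|t|^{-2_\#}$ on each half-line and the identification $\overline{\eta}_0=\sup_{t\ne0}F(t)|t|^{-2_\#}$. You then apply \eqref{eq:GNsym} with $p=2_\#$ to get $J\ge0$ on $\cD\bigl((2C_{N,2_\#}\overline{\eta}_0)^{-N/(2s)}\bigr)$. Your argument for $\overline{\eta}_0>\overline{\eta}_\infty$ is also fine.

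Your hesitation about $\underline{\eta}_0>\underline{\eta}_\infty$ is well founded, and only the second of your two options is legitimate. Nothing in the hypotheses excludes the degenerate configuration, so the claim is false there. Take $2<q<2_\#<p<2^*$, with $F(t)=t^q$ for $t\ge0$ and $F(t)=-|t|^p$ for $t<0$. Then (\ref{a:std}) holds, $F(1)>0$, and $f(t)t<2_\#F(t)$ for every $t\ne0$. However, $F_+\equiv0$ on $(-\infty,0)$ forces $\underline{\eta}_0=\underline{\eta}_\infty=0$.

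The paper's one-line justification says that equality would make $F(t)|t|^{-2_\#}$ constant on a half-line. Because of the positive parts in the definitions, it misses exactly this case.

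The correct statement, under the strict inequality, is the following. $\underline{\eta}_0>\underline{\eta}_\infty$ holds if and only if $\underline{\eta}_0>0$, i.e.\ if and only if $F$ is positive somewhere on both half-lines (for instance when $f$ is odd). Otherwise $\underline{\eta}_0=\underline{\eta}_\infty=0$.

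The same caveat applies to the last claim of Proposition \ref{pr:Ff}; take $F(t)=t^p$ for $t\ge0$ and $F(t)=-|t|^q$ for $t<0$. None of this affects \eqref{eq:fF} or its use in Section \ref{s:app}.
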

\begin{proof}
	Let $t_* \ne 0$ such that $F(t_*) > 0$. Observe that $t \mapsto F(t) |t|^{-2_\#}$ is nonincreasing on $(0,+\infty)$ and nondecreasing on $(-\infty,0)$, which implies $\overline{\eta}_0 \ge \overline{\eta}_\infty$ and $\underline{\eta}_0 \ge \underline{\eta}_\infty$. Moreover,	\begin{equation}\label{eq:eta0sup}
		\overline{\eta}_0 = \max \left\{ \lim_{t \to 0^+} \frac{F(t)}{t^{2_\#}} , \lim_{t \to 0^-} \frac{F(t)}{(-t)^{2_\#}} \right\} = \sup_{t \ne 0} \frac{F(t)}{|t|^{2_\#}} \ge \frac{F(t_*)}{|t_*|^{2_\#}} > 0.
	\end{equation}
	If $\overline{\eta}_0 = +\infty$, then \eqref{eq:fF} follows from Remark \ref{rem:neg} (and equality holds). To prove \eqref{eq:fF} when $\overline{\eta}_0 < +\infty$, we need to show that $J|_{\cD(\overline \rho)} \ge 0$, where $\overline{\rho} \coloneqq (2 C_{N,2_\#} \overline{\eta}_0)^{-N/(2s)}$. To this end, take $u \in \cD(\overline \rho)$. Then, from \eqref{eq:GNsym} with $p = 2_\#$ and \eqref{eq:eta0sup},
	\begin{align*}
		J(u) & = \frac12 [u]_\mu^2 - \overline{\eta}_0 |u|_{2_\#}^{2_\#} + \int_{\R^N} \overline{\eta}_0 |u|^{2_\#} - F(u) \, \dx \\
		& \ge \left( \frac12 - \overline{\eta}_0 C_{N,2_\#} \overline{\rho}^{2s/N} \right) [u]_\mu^2 = 0.
	\end{align*}
	Finally, if $\overline{\eta}_0 = \overline{\eta}_\infty$ or $\underline{\eta}_0 = \underline{\eta}_\infty$, then $t \mapsto F(t) |t|^{-2_\#}$ is constant on $(0,+\infty)$ or $(-\infty,0)$, which rules out that $f(t) t < 2_\# F(t)$ for all $t \ne 0$.
\end{proof}

\begin{Rem}\label{r:fF}
	Under the assumptions of Proposition \ref{pr:fF}, if $\lim_{t \to 0^+} F(t) t^{-2_\#} = \lim_{t \to 0^-} F(t) (-t)^{-2_\#}$, then Lemma \ref{lem:neg} (ii) and \eqref{eq:fF} yield $\rho_* = (2 C_{N,2_\#} \overline{\eta}_0)^{-N/(2s)} = (2 C_{N,2_\#} \underline{\eta}_0)^{-N/(2s)}$.
\end{Rem}

As examples of functions that satisfy the assumptions of Propositions \ref{pr:Ff} and \ref{pr:fF} -- in fact, of Proposition \ref{pr:Ff} and Remark \ref{r:fF} -- we mention, respectively,
\begin{equation*}
	f(t) = \min \{ |t|^{p-2} , |t|^{4s/N} \} t \quad \text{and} \quad f(t) = |t|^{4s/N}t - |t|^{p-2} t
\end{equation*}
with $2_\# < p < 2^*$. The latter was already mentioned in \cite[Remark 6.4]{JL_M3AS} in the special case $N=2$, $s=1$, and $p=6$.

\section{Curl-curl equations}\label{s:curl}
As outlined in Section \ref{s:intro}, a possible derivation of \eqref{eq:} (without $\lambda$) with $N=3$, $K=2$, and $s=\mu=1$ is the search for cylindrically symmetric solutions \eqref{eq:cyl} to curl-curl equations \eqref{eq:curl}. This ansatz was first adopted in \cite{AzzBenDApFor,BDPR}, where the authors worked on the vectorial equation \eqref{eq:curl} reducing the curl-curl operator to the negative vector Laplacian, and later in \cite{MederskiJFA,Zeng} (in $\R^3$ or 3-dimensional cylindrically symmetric bounded domains), where the authors focused on the corresponding equation in $u$. Similar symmetries were also considered in \cite{McLeodStuartTroy,Stuart90}, where cylindrically symmetric travelling-wave solutions were obtained.

Recall that, in $N=3$, the identity
$$
\nabla \times (\nabla \times \mathbf{U}) = \nabla (\operatorname{div} \mathbf{U}) - \Delta \mathbf{U}
$$
holds for every $\mathbf{U} \in C^2 (\R^3; \R^3)$. Since the right-hand side is well-defined for general dimensions $N \geq 3$, we can use it as the definition of $\nabla \times (\nabla \times \mathbf{U})$ for $\mathbf{U} \in C^2 (\R^N; \R^N)$. In the same way, we propose the weak formulation by setting
$$
\int_{\R^N} \langle \nabla \times \mathbf{U}, \nabla \times \mathbf{V} \rangle \, \dx \coloneqq \int_{\R^N} \langle \nabla \mathbf{U}, \nabla \mathbf{V} \rangle - (\operatorname{div} \mathbf{U}) (\operatorname{div} \mathbf{V}) \, \dx
$$
for $\mathbf{U}, \mathbf{V} \in H^1 (\R^N; \R^N)$.

Consider the following problem:
\begin{equation}\label{eq:curlcurl}
\left\{ \begin{array}{l}
     \nabla \times (\nabla \times \mathbf{U}) + \lambda \mathbf{U} = g(\mathbf{U}), \quad x \in \R^{N}, \\
     \displaystyle \int_{\R^N} | \mathbf{U} |^2 \, \dx = \rho,
\end{array} \right.
\end{equation}
where $\mathbf{U} \colon \R^N \rightarrow \R^N$ is an unknown vector field and $g \colon \R^N \rightarrow \R^N$ a nonlinear odd function. Looking for classical solutions of the form
\begin{equation}\label{eq:form}
\mathbf{U} (x) = \frac{u(x)}{r} \begin{pmatrix}
     -x_2 \\ x_1 \\ 0 
\end{pmatrix}, \quad r^2 = x_1^2 + x_2^2, \quad 0 = 0_{\R^{N-2}},
\end{equation}
we see that $u$ satisfies
$$
-\Delta u + \frac{u}{r^2} + \lambda u = f(u), \quad x \in \R^N,
$$
where $f \colon \R \rightarrow \R$ is related to $g$ by
$$
| \mathbf{V} | g(\mathbf{V}) = f(|\mathbf{V}|) \mathbf{V}, \quad \mathbf{V} \in \R^N.
$$
The equivalence, under \eqref{a:std}, also holds for weak solutions, cf. \cite[Theorem 5.1]{BMS} (see also \cite[Theorem 1.1]{Bieganowski} and \cite[Theorem 2.1]{GMS}). Consider $K = 2$ and recall that
$$
\cG = \Set {  \begin{pmatrix}
  g   & 0 \\
  0   & I_{N-2}
\end{pmatrix}  |  g \in \cO(2) }.
$$
We introduce the space
$$
\resizebox{\linewidth}{!}{$
\mathcal{H} \coloneqq \Set{\mathbf{U} \in H^1(\R^N; \R^N) | \mathbf{U} (x) = \frac{u(x)}{r}  \begin{pmatrix}
     -x_2 \\ x_1 \\ 0 
\end{pmatrix}  \mbox{ for some } \cG\mbox{-invariant } u \colon \R^N \rightarrow \R }. $}
$$
We introduce the $L^2$-sphere and the $L^2$-disk in $\mathcal{H}$
\begin{align*}
\cD_{\mathcal{H}} (\rho) &\coloneqq \Set{ \mathbf{U} \in \mathcal{H} | \int_{\R^N} |\mathbf{U}|^2 \, \dx \leq \rho }, \\
\cS_{\mathcal{H}} (\rho) &\coloneqq \Set{ \mathbf{U} \in \mathcal{H} | \int_{\R^N} |\mathbf{U}|^2 \, \dx = \rho }.
\end{align*}
Finally, we define the energy functional $E \colon \mathcal{H} \rightarrow \R$ by
$$
E(\mathbf{U}) \coloneqq \frac12 \int_{\R^N} |\nabla \times \mathbf{U}|^2 \, \dx - \int_{\R^N} G(\mathbf{U}) \, \dx, \quad \mathbf{U} \in \mathcal{H},
$$
where
$$
G(\mathbf{U}) \coloneqq \int_0^1 g(t \mathbf{U}) \cdot \mathbf{U} \, \dt.
$$
If $f$ is odd, then, for $\mathbf{U} \in \cH$, we get $|\mathbf{U}| = |u|$, $|\nabla \times \mathbf{U}|_2^2 = |\nabla u|_2^2 + \int_{\R^N} \frac{u^2}{r^2} \, \dx$, $\int_{\R^N} G(\mathbf{U}) \, \dx = \int_{\R^N} F(u) \, \dx$, and $\int_{\R^N} g(\mathbf{U}) \cdot \mathbf{U} \, \dx = \int_{\R^N} f(u)u \, \dx$. In particular, $E(\mathbf{U}) = J(u)$ and
\begin{equation*}
m_{\mathcal{H}}(\rho) \coloneqq \inf\Set{E(\mathbf{U}) | \mathbf{U} \in \cD_\cH(\rho)} = m(\rho),
\end{equation*}
where $m(\rho)$ is defined in \eqref{eq:m}. Recalling the definition of $\rho_*$ in \eqref{eq:threshold}, from Theorem \ref{t:neg} we obtain the following result.

\begin{Th}\label{t:neg-curl}
Assume $N \ge 3$, $s=1$, (\ref{a:std})-(\ref{a:supq}), $f$ is odd, and $F$ is positive somewhere. Suppose, moreover, that $\rho_*$ satisfies \eqref{eq:etai} and $\rho \in \bigl( \rho_* , (2 C_{N,2_\#} \overline{\eta}_\infty)^{-N/2} \bigr)$. If $(\mathbf{U}_n)_n \subset \cD_{\mathcal{H}}(\rho)$ and $\lim_n E(\mathbf{U}_n) = m(\rho)$, then there exist $\mathbf{U} \in \cS_{\mathcal{H}}(\rho)$ and $\lambda > 0$ such that, up to subsequences and translations\footnote{For a function $\mathbf{U} \colon \R^N \to \R^N$ satisfying \eqref{eq:form}, by a translation in the variable $z \in \R^{N-2}$ we mean a translation of $u$.} in the variable $z \in \R^{N-2}$, $\mathbf{U}_n \to \mathbf{U}$ in $H^1 (\R^N;\R^N)$, $E(\mathbf{U}) = m(\rho) \in (-\infty,0)$, and $(\lambda,\mathbf{U})$ is a solution to \eqref{eq:curlcurl}. Furthermore, every $\widetilde{\mathbf{U}} \in \cD_\cH(\rho)$ such that $E(\widetilde{\mathbf{U}}) = m(\rho)$ belongs to $\cS_{\mathcal{H}}(\rho)$ and satisfies \eqref{eq:curlcurl} for some $\widetilde{\lambda} > 0$.
\end{Th}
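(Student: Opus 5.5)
The plan is to transfer Theorem \ref{t:neg} (3) to the vector setting through the correspondence $\mathbf{U} \leftrightarrow u$ given by \eqref{eq:form}, with $K=2$, $s=\mu=1$ and $N \ge 3$. Since $K = 2 > 2s$ fails, we are in the regime $\mu = 1 \ge 0$, so $X^\mu$ is the space of $u \in H^1(\R^N)$ with $\int_{\R^N} u^2 r^{-2} \, \dx < +\infty$.

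\textbf{Step 1: the map $\Phi \colon \cH \to X_\cG^1$, $\mathbf{U} \mapsto u$.} I will first check that $\Phi$ is a bijection. A computation in cylindrical coordinates gives $|\nabla \mathbf{U}|^2 = |\nabla u|^2 + u^2/r^2$ pointwise, while $\operatorname{div} \mathbf{U} = 0$ because $u$ is $\cG$-invariant. Hence
\begin{equation*}
\|\mathbf{U}\|_{H^1}^2 = |u|_2^2 + |\nabla u|_2^2 + \int_{\R^N} \frac{u^2}{r^2} \, \dx = |u|_2^2 + [u]_1^2 .
\end{equation*}
So $\Phi$ is a linear isometric isomorphism between $\cH$ with the $H^1$ norm and $X_\cG^1$ with its natural norm. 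This identity also underlies the statements $E(\mathbf{U}) = J(u)$ and $|\mathbf{U}|_2 = |u|_2$ recalled before the theorem.

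\textbf{Step 2: transfer the hypotheses.} Under $\Phi$, $\cD_\cH(\rho)$ corresponds to $\cD(\rho)$ and $\cS_\cH(\rho)$ to $\cS(\rho)$. For $N \ge 3$ and $s=1$ we have $N > 2s$, so (\ref{a:std}) is the usual subcritical-growth hypothesis, and $F$ is positive somewhere by assumption. Given $(\mathbf{U}_n)_n \subset \cD_\cH(\rho)$ with $E(\mathbf{U}_n) \to m(\rho)$, I set $u_n \coloneqq \Phi(\mathbf{U}_n)$. Then $(u_n)_n \subset \cD(\rho)$ and $J(u_n) \to m(\rho)$.

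Theorem \ref{t:neg} (3) then gives $u \in \cS(\rho)$ and $\lambda > 0$ such that, up to subsequences and translations in $z$, $u_n \to u$ in $X^1$, $J(u) = m(\rho) < 0$, and $(\lambda,u)$ solves \eqref{eq:}. I set $\mathbf{U} \coloneqq \Phi^{-1}(u)$. A translation in $z \in \R^{N-2}$ commutes with $\Phi$, since the factor $(-x_2,x_1,0)/r$ does not depend on $z$. By the isometry of Step 1, $\mathbf{U}_n \to \mathbf{U}$ in $H^1(\R^N;\R^N)$ along the same subsequence and translations, $E(\mathbf{U}) = m(\rho) \in (-\infty,0)$, and $\mathbf{U} \in \cS_\cH(\rho)$.

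\textbf{Step 3: $\mathbf{U}$ solves \eqref{eq:curlcurl}.} This is the main point needing care. A weak solution $u \in X_\cG^1$ of $-\Delta u + u/r^2 + \lambda u = f(u)$ must yield a weak solution $\mathbf{U}$ of the curl-curl equation tested against all of $H^1(\R^N;\R^N)$, not only against $\cH$. I will invoke the equivalence recalled before the theorem, namely \cite[Theorem 5.1]{BMS}, valid under (\ref{a:std}) with $f$ odd, which gives exactly this.

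If one prefers a self-contained argument, it goes as follows. Note $E = J \circ \Phi$ on $\cH$, so the criticality of $u$ for $J + \frac{\lambda}{2}|\cdot|_2^2$ on $X_\cG^1$ gives criticality of $\mathbf{U}$ on $\cH$. One then extends to the whole space via Palais' symmetric criticality, using a group action on $H^1(\R^N;\R^N)$ whose fixed-point space is $\cH$. That is the content of the cited results, so I would rely on them.

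\textbf{Step 4: the final assertion.} Any $\widetilde{\mathbf{U}} \in \cD_\cH(\rho)$ with $E(\widetilde{\mathbf{U}}) = m(\rho)$ gives $\widetilde{u} = \Phi(\widetilde{\mathbf{U}}) \in \cD(\rho)$ with $J(\widetilde{u}) = m(\rho)$. By the last part of Theorem \ref{t:neg} (3) (see also Remark \ref{r:lambda}), $\widetilde{u} \in \cS(\rho)$ and $\widetilde{u}$ solves \eqref{eq:} for some $\widetilde{\lambda} > 0$. Transferring back as in Step 3 gives $\widetilde{\mathbf{U}} \in \cS_\cH(\rho)$ solving \eqref{eq:curlcurl} with the same $\widetilde{\lambda}$.
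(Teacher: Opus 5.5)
Your proposal is correct and follows essentially the paper's route. The paper obtains this theorem directly from Theorem \ref{t:neg} (3), using $E(\mathbf{U}) = J(u)$, $m_{\mathcal{H}} = m$, and the weak-solution equivalence of \cite[Theorem 5.1]{BMS}. Your Steps 1--4 just make explicit what the paper leaves implicit: the isometry $\mathcal{H} \cong X_\cG^1$ and the fact that translations in $z$ commute with the ansatz \eqref{eq:form}.
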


Observe that Theorem \ref{t:neg-curl} substantially improves \cite[Theorem 8.1.1]{SchinoPHD}. Next, as in \eqref{eq:setB}, we introduce
$$
\cB_{\mathcal{H}}(\rho) \coloneqq \Set{ \mathbf{U} \in \cD_{\mathcal{H}}(\rho) | \left| \nabla \times \mathbf{U} \right|_2^2 > \gamma_* },
$$
where $\gamma_*>0$ is given in Lemma \ref{l:gamma} for $\rho = \rho_*$, $\mu = 1$, and $K=2$. Then, we introduce
\begin{equation*}
m_{*,\mathcal{H}}(\rho) \coloneqq \inf \Set{ E(\mathbf{U}) | \mathbf{U} \in \cB_{\mathcal{H}}(\rho) } = m_{*}(\rho),
\end{equation*}
where $m_*(\rho)$ is defined in \eqref{eq:m_star}. In the setting of the curl-curl problem, Theorem \ref{t:pos} reads then as follows.

\begin{Th}\label{t:pos-curl}
Assume $N \ge 3$, $s=1$, (\ref{a:std})-(\ref{a:supq}), $f$ is odd, $F$ is positive somewhere, $\rho_*$ satisfies \eqref{eq:zero_rho_star} and \eqref{eq:two_etas}, and let $\overline{\rho}_*$ be given by Lemma \ref{lem:A}. If $\rho \in (\overline{\rho}_* , \rho_*]$, $(\mathbf{U}_n)_n \subset \cB_{\mathcal{H}}(\rho)$, and $\lim_n E(\mathbf{U}_n) = m_*(\rho)$, then there exists $\mathbf{U} \in \cB_{\mathcal{H}}(\rho) \cap \cS_{\mathcal{H}}(\rho)$ and $\lambda > 0$ such that, up to subsequences and translations in the variable $z \in \R^{N-2}$, $\mathbf{U}_n \to \mathbf{U}$ in $H^1 (\R^N; \R^N)$ and $(\lambda,\mathbf{U})$ is a solution to \eqref{eq:curlcurl}. In particular, $m_*(\rho) = E(\mathbf{U}) > 0$ if $\rho \in (\overline{\rho}_* , \rho_*)$ and $m_*(\rho_*) = E(\mathbf{U}) = 0 = m(\rho_*)$.
\end{Th}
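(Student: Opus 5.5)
The plan is to transfer Theorem \ref{t:pos} to the vector setting through the correspondence $\mathbf{U}\leftrightarrow u$ given by \eqref{eq:form}, with $K=2$, $\mu=1$ and $s=1$. I will carry this out in four steps.

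\textbf{Step 1: the map $\Phi$.} Define $\Phi\colon X_\cG^1\to\cH$ by sending a $\cG$-invariant $u$ to the field $\mathbf{U}$ of the form \eqref{eq:form}. For $u\in X_\cG^1$ the field $\mathbf{U}$ lies in $H^1(\R^N;\R^N)$. This follows because $\mathbf{U}$ is divergence-free, since $u$ depends only on $r$ and $z$. Hence
\[
|\nabla\mathbf{U}|_2^2=|\nabla\times\mathbf{U}|_2^2=|\nabla u|_2^2+\int_{\R^N}\frac{u^2}{r^2}\,\dx=[u]_1^2 .
\]
Conversely, each element of $\cH$ comes from such a $u$, which belongs to $X_\cG^1$ by the same identity.

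So $\Phi$ is a linear bijection with the following properties:
\begin{itemize}
\item it is isometric for the norms $|\mathbf{U}|_2=|u|_2$ and $|\nabla\times\mathbf{U}|_2=[u]_1$;
\item it commutes with translations in $z$;
\item it maps $\cD(\rho),\cS(\rho),\cB(\rho)$ onto $\cD_\cH(\rho),\cS_\cH(\rho),\cB_\cH(\rho)$;
\item it satisfies $E\circ\Phi=J$, using $f$ odd as noted before the statement.
\end{itemize}
Consequently $m_{*,\cH}=m_*$.

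\textbf{Step 2: apply Theorem \ref{t:pos}.} Given $(\mathbf{U}_n)_n\subset\cB_\cH(\rho)$ with $E(\mathbf{U}_n)\to m_*(\rho)$, write $\mathbf{U}_n=\Phi(u_n)$. Then $(u_n)_n\subset\cB(\rho)$ and $J(u_n)\to m_*(\rho)$. The hypotheses of Theorem \ref{t:pos} hold with $N\ge3>K=2$, because $\mu=1\ge0$ is an admissible value.

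Theorem \ref{t:pos} gives $u\in\cB(\rho)\cap\cS(\rho)$ and $\lambda>0$ such that, up to subsequences and $z$-translations, $u_n\to u$ in $X^1$. Moreover $(\lambda,u)$ solves \eqref{eq:}--\eqref{eq:L2}. It also gives $m_*(\rho)=J(u)>0$ for $\rho<\rho_*$, and $m_*(\rho_*)=J(u)=0=m(\rho_*)$.

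\textbf{Step 3: transfer convergence and energies.} Set $\mathbf{U}\coloneqq\Phi(u)$. Since $\Phi$ commutes with translations and is linear, $\mathbf{U}_n-\mathbf{U}$ is, up to a translation, equal to $\Phi(u_n-u)$. Therefore
\[
\|\mathbf{U}_n-\mathbf{U}\|_{H^1}^2=|u_n-u|_2^2+[u_n-u]_1^2\to0 .
\]
The energy statements carry over directly because $E(\mathbf{U})=J(u)$.

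\textbf{Step 4: the equation (main obstacle).} What remains is to show that $(\lambda,\mathbf{U})$ solves \eqref{eq:curlcurl} weakly for all test fields in $H^1(\R^N;\R^N)$, not only for fields in $\cH$. This requires the equivalence between weak solutions of the scalar singular equation and of the curl-curl equation under (\ref{a:std}), namely \cite[Theorem 5.1]{BMS}, which the paper already invokes.

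The idea is a symmetric-criticality argument. Decompose a general test field into its $\cH$-component and a complementary component. The complementary part is annihilated by both the curl-curl form and the nonlinearity, thanks to the $\cG$-equivariance and the structure $g(\mathbf{V})=f(|\mathbf{V}|)\mathbf{V}/|\mathbf{V}|$. The mass term $\lambda\mathbf{U}$ fits this scheme as well, since the $L^2$ pairing respects the same decomposition.

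I would simply cite that theorem, checking that adding the term $\lambda u$ does not affect its proof because it is a linear $L^2$ term. Verifying this compatibility is the only genuinely non-routine point of the argument.
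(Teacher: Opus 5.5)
Your proposal is correct and follows the paper's route. The paper obtains this theorem by the same transfer: $E(\mathbf{U})=J(u)$ on $\cH$ (using that $f$ is odd), hence $m_{*,\cH}=m_*$, followed by Theorem \ref{t:pos} with $K=2$, $\mu=1$, $s=1$, and the paper invokes the weak-solution equivalence of \cite[Theorem 5.1]{BMS} directly for the equation including the term $\lambda u$. Your symmetric-criticality sketch for Step 4 is therefore a valid justification, but the citation alone is all the paper uses.
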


In the same way, we obtain the following nonexistence result in the class $\mathcal{H}$.

\begin{Th}\label{t:noex-curl}
Assume $N \ge 3$, $s=1$, (\ref{a:std}), {\color{magenta}$f$ is odd,} and $0 < \rho_* < +\infty$. If $2_\# F(t) < f(t) t$ for all $t \ne 0$ or $f(t) t < 2_\# F(t)$ for all $t \ne 0$, then no nontrivial minimisers of $E|_{\cD_\mathcal{H} (\rho_*)}$ exist.
\end{Th}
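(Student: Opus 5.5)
The plan is to reduce Theorem \ref{t:noex-curl} to Theorem \ref{t:noex} through the correspondence $\mathbf{U} \leftrightarrow u$ of \eqref{eq:form}, taking $K=2$, $s=\mu=1$. Fix $\mathbf{U} \in \cH$ and write it as in \eqref{eq:form} with a $\cG$-invariant $u$. Because $f$ is odd, the computations stated just before Theorem \ref{t:neg-curl} give $|\mathbf{U}|=|u|$ pointwise, $E(\mathbf{U}) = J(u)$, and $|\nabla \times \mathbf{U}|_2^2 = [u]_1^2$. Thus $\mathbf{U} \in \cD_\cH(\rho)$ if and only if $u \in \cD(\rho)$, and $m_\cH(\rho) = m(\rho)$ for every $\rho>0$. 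It follows that the threshold for $E$, namely $\inf\{\rho>0 \mid m_\cH(\rho)<0\}$, coincides with $\rho_*$ defined in \eqref{eq:threshold}. The hypothesis $0<\rho_*<+\infty$ therefore means exactly what it does in Theorem \ref{t:noex}.

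The one nontrivial point is the bijection between $\cH$ and $X_\cG^1$. Given $u \in X_\cG^1$, I need $\mathbf{U} \in H^1(\R^N;\R^N)$; conversely, given $\mathbf{U} \in \cH$, I need $u \in X^1_\cG$, i.e. $u \in H^1$ with $\int u^2 r^{-2} < \infty$. A direct computation gives $|\nabla \mathbf{U}|^2 = |\nabla u|^2 + u^2/r^2$ and $\operatorname{div}\mathbf{U} = 0$ for $\cG$-invariant $u$, which settles both directions for smooth functions. For general functions I would cite the equivalence in \cite[Theorem 5.1]{BMS}, already invoked in this section, and pass to the limit by density of smooth $\cG$-invariant functions. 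I expect this identification of function spaces to be the only step requiring care. It is standard, since the paper already uses $m_\cH = m$ when deriving Theorems \ref{t:neg-curl} and \ref{t:pos-curl}.

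Now suppose, by contradiction, that $\mathbf{U} \in \cD_\cH(\rho_*)\setminus\{0\}$ minimises $E|_{\cD_\cH(\rho_*)}$. The corresponding $u$ is nonzero, since $|u|=|\mathbf{U}|$. It lies in $\cD(\rho_*)$, and for every $v \in \cD(\rho_*)$ we have $J(v) = E(\mathbf{V}) \ge E(\mathbf{U}) = J(u)$, where $\mathbf{V}$ is associated with $v$ through \eqref{eq:form}. So $u$ is a nontrivial minimiser of $J|_{\cD(\rho_*)}$. The remaining hypotheses of Theorem \ref{t:noex} hold: $N \ge 3 > K = 2$, (\ref{a:std}) is assumed, $0<\rho_*<\infty$, and one of the sign conditions on $f(t)t - 2_\# F(t)$ is assumed. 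Theorem \ref{t:noex} therefore rules out such a $u$, which is a contradiction.

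If one prefers not to rely on Theorem \ref{t:noex} as a black box, the same contradiction follows directly. Lemma \ref{l:cont} gives $J(u) = m(\rho_*) = 0$, and the dilation argument in Lemma \ref{lem:Poh} gives $M(u)=0$. Hence $0 = J(u) - \frac{1}{2s}M(u) = \frac{N}{4s}\int_{\R^N} f(u)u - 2_\# F(u)\,\dx$. Since $u \ne 0$ on a set of positive measure, this integral is nonzero under either sign condition, which is impossible.
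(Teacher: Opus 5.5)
Your proposal is correct and is essentially the paper's argument: the paper obtains Theorem \ref{t:noex-curl} ``in the same way'' by transferring Theorem \ref{t:noex} through $E(\mathbf{U}) = J(u)$ and $m_\cH = m$, which is where the oddness of $f$ enters. Your direct fallback via $J(u) - \frac{1}{2s}M(u)$ is verbatim the proof of Theorem \ref{t:noex}, and the identification of $\cH$ with $X^1_\cG$ that you flag as the delicate point is also taken for granted in the paper, with your pointwise identities $|\nabla \mathbf{U}|^2 = |\nabla u|^2 + u^2/r^2$ and $\operatorname{div}\mathbf{U} = 0$ being the right justification.
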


\appendix
\section{Remarks on the upper bounds for $\rho_*$}\label{App}
Under (\ref{a:std}) and the assumption that $F$ is positive somewhere, Lemma \ref{lem:neg} provides upper bounds for $\rho_*$, i.e., $\rho_* \le \rho_F$ and, if (\ref{a:subcz}) holds, $\rho_* \le (2 C_{N,2_\#} \underline{\eta}_0)^{-N/(2s)}$. Since (\ref{a:subcz}) is a stronger condition than the existence of a point where $F$ is positive, one would expect the latter upper bound not to exceed the former. Somewhat surprisingly, instead, the opposite scenario occurs, at least when the optimal function in the Gagliardo--Nirenberg-type inequality \eqref{eq:GNsym} is essentially bounded. However, one should observe that such a `worse' bound is more likely to be explicitly computed than the other. Moreover, for the sake of completeness, we recall that Remark \ref{r:fF} provides sufficient conditions for such upper bounds to coincide.

\begin{Prop}
Let $w \in X^\mu_\cG$ be the function given by Theorem \ref{t:GN} with $p = 2_\#$. If (\ref{a:std}) and (\ref{a:subcz}) hold and $w \in L^\infty(\R^N)$, then $\rho_F \le (2 C_{N,2_\#} \underline{\eta}_0)^{-N/(2s)}$.
\end{Prop}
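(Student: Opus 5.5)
We test the infimum defining $\rho_F$ with suitable rescalings of $w$ and let the amplitude tend to zero. Put $\rho_0 \coloneqq (2 C_{N,2_\#} \underline{\eta}_0)^{-N/(2s)}$. If $\underline{\eta}_0 = +\infty$ the claim reads $\rho_F \le 0$, so this case will need the same argument with $\underline{\eta}_0$ replaced by an arbitrary large constant $\eta$; we first treat $\underline{\eta}_0<+\infty$. For $t>0$ set $w_t \coloneqq t w$. Then $[w_t]_\mu^{N/s} |w_t|_2^2 = t^{N/s+2} [w]_\mu^{N/s}|w|_2^2$.

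The key step is a lower bound for $\int_{\R^N} F(tw)\,\dx$ for small $t$. Since $w \in L^\infty(\R^N)$, for every $\varepsilon>0$ there is $t_\varepsilon>0$ such that $F(\tau) \ge (\underline{\eta}_0 - \varepsilon)|\tau|^{2_\#}$ whenever $|\tau| \le t_\varepsilon |w|_\infty$. Here we use $F \ge F_+ - F_-$, the fact that $F_- \equiv 0$ near $0$ by (\ref{a:subcz}), and the definition of the liminf. Hence, for $t \le t_\varepsilon$,
\begin{equation*}
\int_{\R^N} F(tw)\,\dx \ge (\underline{\eta}_0-\varepsilon)\, t^{2_\#} |w|_{2_\#}^{2_\#} \eqqcolon \sigma_t .
\end{equation*}
The boundedness of $w$ is used exactly here: it makes the small-argument expansion of $F$ valid pointwise everywhere, uniformly in $x$. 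This is the only delicate point.

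Plugging $\sigma=\sigma_t$ and $u=w_t$ into \eqref{eq:rhoF} gives
\begin{equation*}
\rho_F \le \left(\frac{1}{2\sigma_t}\right)^{N/(2s)} t^{N/s+2}[w]_\mu^{N/s}|w|_2^2 .
\end{equation*}
Since $2_\# N/(2s) = N/s+2$, the powers of $t$ cancel exactly. We are left with
\begin{equation*}
\rho_F \le \left(\frac{[w]_\mu^2 |w|_2^{4s/N}}{2(\underline{\eta}_0-\varepsilon)|w|_{2_\#}^{2_\#}}\right)^{N/(2s)} .
\end{equation*}

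By optimality of $w$ in \eqref{eq:GNsym} with $p=2_\#$, and because $p\delta_p = 2$ and $p(1-\delta_p) = 4s/N$, we have $|w|_{2_\#}^{2_\#} = C_{N,2_\#}[w]_\mu^2|w|_2^{4s/N}$. Substituting, the bound becomes $\bigl(2C_{N,2_\#}(\underline{\eta}_0-\varepsilon)\bigr)^{-N/(2s)}$. Letting $\varepsilon\to0$ yields $\rho_F \le \rho_0$.

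When $\underline{\eta}_0 = +\infty$, we replace $\underline{\eta}_0-\varepsilon$ by an arbitrary $\eta>0$. This gives $\rho_F \le (2C_{N,2_\#}\eta)^{-N/(2s)}$ for every $\eta>0$, hence $\rho_F = 0$, which is consistent with the convention $\frac10=+\infty$ in the statement.
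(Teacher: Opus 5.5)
Your proof is correct and is essentially the paper's argument: you test $\rho_F$ with a rescaled copy of the Gagliardo--Nirenberg optimiser $w$ whose sup norm lies below the threshold where $F(\tau)\ge(\underline{\eta}_0-\varepsilon)|\tau|^{2_\#}$, and then use equality in \eqref{eq:GNsym} to identify $[w]_\mu^{N/s}|w|_2^2$ with $\bigl(C_{N,2_\#}^{-1}|w|_{2_\#}^{2_\#}\bigr)^{N/(2s)}$. The only difference is bookkeeping: the paper fixes $\sigma$ and uses both internal and external scaling to normalise $|w|_\infty=\delta$ and $|w|_{2_\#}^{2_\#}=\sigma/(\underline{\eta}_0-\varepsilon)$, whereas you scale only the amplitude $t$ and let $\sigma=\sigma_t$ depend on it (implicitly taking $\varepsilon<\underline{\eta}_0$ so that $\sigma_t>0$).
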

\begin{proof}
We consider the case when $\underline{\eta}_0 < \infty$ (the other is similar). Let $\varepsilon > 0$. There exists $\delta > 0$ such that $F(t) \ge (\underline{\eta}_0 - \varepsilon) |t|^{2_\#}$ if $|t| \le \delta$.
Let $\sigma > 0$. Up to internal and external scaling, we can assume that $|w|_\infty = \delta$ and $|w|_{2_\#}^{2_\#} = \sigma / (\underline{\eta}_0 - \varepsilon)$. Then,
\begin{equation*}
	\int_{\R^N} F(w) \, \dx \ge \left( \underline{\eta}_0 - \varepsilon \right) \int_{\R^N} |w|^{2_\#} \, \dx = \sigma.
\end{equation*}
It follows that
\begin{multline*}
	\inf \Set{ [u]_\mu^{N/s} \left|u\right|_2^2 | u \in X_\cG^\mu \text{ and } \int_{\R^N} F(u) \, \dx \ge \sigma } \le [w]_\mu^{N/s} \left|w\right|_2^2 \\
	= \left( C_{N,2_\#}^{-1} |w|_{2_\#}^{2_\#} \right)^{N/(2s)} = \left( \frac{\sigma}{C_{N,2_\#} (\underline{\eta}_0 - \varepsilon)} \right)^{N/(2s)},
\end{multline*}
whence
\begin{equation*}
	\rho_F \le \left( \frac{1}{2 C_{N,2_\#} (\underline{\eta}_0 - \varepsilon)} \right)^{N/(2s)}. \qedhere
\end{equation*}
\end{proof}

\section*{Acknowledgements}
Bartosz Bieganowski is supported by the National Science Centre, Poland (Grant no. 2022/47/D/ST1/00487). Jacopo Schino is a member of GNAMPA (INdAM) and is supported by the GNAMPA-INdAM Project 2026 (CUP E53C25002010001).

We thank the anonymous reviewer for their valuable comments, which allowed us to increase the value of this manuscript.

\end{document}